\theoremstyle{plain}
\newtheorem{theorem}{Theorem}[section]
\newtheorem{prop}[theorem]{Proposition}
\newtheorem{lemma}[theorem]{Lemma}
\theoremstyle{definition}
\newtheorem{dfn}[theorem]{Definition}
\newtheorem{ex}[theorem]{Example}
\newtheorem{prob}{Problem}
\theoremstyle{remark}
\newtheorem{rem}[theorem]{Remark}
\numberwithin{equation}{section}
\DeclareMathAlphabet\mathsf{OT1}{cmss}{m}{n}
\newcommand{\ssA}{\mathsf{A}}
\newcommand{\ssC}{\mathsf{C}}
\newcommand{\ssD}{\mathsf{D}}
\newcommand{\C}{\mathbb C}
\newcommand{\R}{\mathbb R}
\DeclareMathOperator{\GL}{GL}
\newcommand{\su}{\mathfrak{su}}
\newcommand{\gl}{\mathfrak{gl}}
\DeclareMathOperator{\Sp}{Sp}
\renewcommand{\sp}{\mathfrak{sp}}
\DeclareMathOperator{\Or}{O}
\newcommand{\so}{\mathfrak{so}}
\renewcommand{\sl}{\mathfrak{sl}}
\newcommand{\g}{\mathfrak{g}}
\renewcommand{\k}{\mathfrak{k}}
\newcommand{\h}{\mathfrak{h}}
\renewcommand{\q}{\mathfrak{q}}
\newcommand{\p}{\mathfrak{p}}
\newcommand{\F}[2]{F^{(#2)}_{#1}}
\renewcommand{\O}{\mathcal{O}}
\newcommand{\B}{\mathcal{B}}
\newcommand{\W}{\mathcal{W}}
\newcommand{\kW}{\prescript{\k}{}{\mathcal{W}}}
\DeclareMathOperator{\rk}{rk}
\DeclareMathOperator{\Par}{Par}
\DeclareMathOperator{\tr}{tr}
\DeclareMathOperator{\M}{M}
\DeclareMathOperator{\SM}{SM}
\DeclareMathOperator{\AM}{AM}
\newcommand{\st}[1]{\llbracket \Phi_{#1}\rrbracket}
\newcommand{\stla}[1]{\llbracket \Phi_{#1}\rrbracket_{\la}}
\newcommand{\la}{\lambda}
\newcommand{\al}{\alpha}
\newcommand{\be}{\beta}
\newcommand{\ep}{\varepsilon}
\newcommand{\bfx}{\mathbf{x}}
\newcommand{\bfy}{\mathbf{y}}
\renewcommand{\neg}[1]{\overline{#1}}
\DeclareMathOperator{\rows}{\mathbf{Rows}}
\DeclareMathOperator{\cols}{\mathbf{Cols}}
\DeclareMathOperator{\ch}{ch}
\DeclareMathOperator{\tL}{\widetilde{\Lambda}}
\newcommand{\Lplusk}{\Lambda^{\!+\!}(\k)}
\newcommand{\bbox}{\blacksquare}
\newcommand{\posarrow}[2]{ \underset{\substack{\textstyle\uparrow\\\hidewidth\mathstrut#2\hidewidth}}{#1}}
\providecommand*{\cupdot}{%
  \mathbin{%
    \mathpalette\@cupdot{}%
  }%
}
\newcommand*{\@cupdot}[2]{%
  \ooalign{%
    $\m@th#1\cup$\cr
    \hidewidth$\m@th#1\cdot$\hidewidth
  }%
}
\begin{document}

\title{Dimension identities, almost self-conjugate partitions,\\ and BGG complexes for Hermitian symmetric pairs}

\author{William Q.~Erickson$^{\text{a}}$}
\thanks{$^\text{a}$ Baylor University, William.Q.Erickson@gmail.com (Corresponding author)}

\author{Markus Hunziker$^\text{b}$}
\thanks{$^\text{b}$ Baylor University, Markus\_Hunziker@baylor.edu}

\begin{abstract}
An \emph{almost self-conjugate} (ASC) partition has a Young diagram in which each arm along the diagonal is exactly one box longer than its corresponding leg.
Classically, the ASC partitions and their conjugates appear in two of Littlewood's symmetric function identities.  
These identities can be viewed as Euler characteristics of BGG complexes of the trivial representation, for classical Hermitian symmetric pairs.
In this paper, we consider partitions  in which the arm--leg difference is an arbitrary constant $m$.  
By viewing these partitions as highest weights, we establish an infinite family of dimension identities between $\gl_n$- and $\gl_{n+m}$-modules.
We then interpret this result in the context of blocks in parabolic category $\O$: 
in particular, we exhibit six infinite families of congruent blocks whose corresponding posets of highest weights consist of the partitions in question.
These posets, in turn, lead to generalizations of the Littlewood identities and their corresponding BGG complexes.
Our results in this paper shed light on the surprising combinatorics underlying the work of Enright and Willenbring (2004).
\end{abstract}

\subjclass[2020]{Primary 05E10; Secondary 22E47, 17B10}

\keywords{Almost self-conjugate partitions, Littlewood identities, parabolic category $\O$, Hermitian symmetric pairs, Enright--Shelton reduction, Hilbert series}

\maketitle

\tableofcontents

\section{Introduction}

\subsection{\!\!\!}
\label{sub:first intro}
An \emph{almost self-conjugate (ASC) partition} is a weakly decreasing tuple of positive integers whose Young diagram has a special shape: 
for each box on the main diagonal (marked with a dot in the picture below), its \emph{arm} (the part of its row strictly to its right) is longer than its \emph{leg} (the part of its column strictly below it) by exactly one box.
For example, $(7,5,4,2,1,1)$ is an ASC partition, as is apparent from its Young diagram below:

\[
\ytableausetup{smalltableaux}
\begin{ytableau}
\bullet &{}&{}&{}&{}&{}&{}&\none[\scriptstyle \rightarrow]&\none[\scriptstyle 6]\\
{}&\bullet&{}&{}&{}&\none[\scriptstyle\rightarrow]&\none[\scriptstyle 3]\\
{}&{}&\bullet&{}&\none[\scriptstyle\rightarrow]&\none[\scriptstyle 1]\\
{}&{}&\none[\scriptstyle\downarrow]\\
{}&\none[\scriptstyle\downarrow]&\none[\scriptstyle 0]\\
{}&\none[\scriptstyle 2]\\
\none[\scriptstyle\downarrow]\\
\none[\scriptstyle 5]
\end{ytableau}
\]
We follow~\cite{Dong} in adopting the ``ASC'' terminology; these partitions also feature in~\cite{Linusson}, where they are called ``shift-symmetric'' partitions.  
Partitions with the conjugate shape (i.e., where each arm is one box \emph{shorter} than its corresponding leg) are known in the literature as ``threshold partitions,'' since they are precisely the partitions that can be realized as the degree sequence of a threshold graph; see~\cite{Hammer}*{Lemma 10}.
Our own interest in ASC partitions arose from their appearance in symmetric function identities due to Dudley Littlewood, and in related BGG complexes.
In a sense, these complexes are the ``natural habitat'' for ASC partitions $\pi$ and their conjugates $\pi'$. 
In this paper, we classify the BGG complexes acting as the natural habitat for a generalization of the ASC partitions, namely, partitions for which the arm--leg difference is an arbitrary nonnegative integer $m$.
In recent work~\cites{AK,Albion,JW25}, these objects are called \emph{$(-m)$-asymmetric partitions}.

We begin by recalling three classical identities which will be a recurring theme of this paper, each identity involving the Schur polynomials $s_\pi$. 
First we have the dual Cauchy identity~\cite{Stanley}*{Thm.~7.14.3}:
\begin{equation}
\tag{I}
\label{Dual-Cauchy}
    \prod_{\mathclap{\substack{1 \leq i \leq p, \\ 1 \leq j \leq q \phantom{,}}}} \: (1+x_i y_j) = \sum_{\mathclap{\pi \in \Par(p \times q)}}s_\pi(x_1,\ldots,x_p) s_{\pi'}(y_1,\ldots,y_q),
\end{equation}
where $\Par(p\times q)$ is the set of partitions whose Young diagram fits inside a $p \times q$ rectangle.  
Next we have two of the Littlewood identities ~\cite{Littlewood}*{\S11.9}:
\begin{align}
\prod_{\mathclap{1 \leq i\leq j \leq n}} \: (1-x_i x_j) &= \sum_{\substack{\pi: \\ \text{$\pi$ is ASC}}} (-1)^{|\pi|/2} s_\pi(x_1,\ldots,x_n), \tag{II}\label{Littlewood-C}\\[1ex]
\prod_{\mathclap{1 \leq i<j \leq n}} \: (1- x_i x_j) &= \sum_{\substack{\pi: \\ \text{$\pi'$ is ASC}}} (-1)^{|\pi|/2} s_{\pi}(x_1,\ldots,x_{n}), \tag{III}\label{Littlewood-D}
\end{align}
where the partitions $\pi$ in each sum have at most $n$ parts.
With the identities~\eqref{Dual-Cauchy}--\eqref{Littlewood-D} in hand, we outline the results and methods of this paper below.

\subsection{Dimension identities}  

We begin by proving two new identities (Theorems~\ref{theorem:ID-dim-GLn} and \ref{thm:dim GLn pairs}) relating the dimensions of certain modules for $\gl_n$ and $\gl_{n+m}$, when the highest weights are partitions whose arm--leg difference is $m$.  
In the special case $m=1$, the highest weights are precisely the ASC partitions appearing in the Littlewood identities~\eqref{Littlewood-C} and \eqref{Littlewood-D}.  
(See Figure~\ref{fig:example intro}, which illustrates an example of the dimension identity in Theorem~\ref{thm:dim GLn pairs}.) 
These dimension identities are interesting in their own right from a combinatorial viewpoint, but they play a larger role later in the paper, in the proof of our main result (Theorem~\ref{thm:Cong and Conj}).  
This work arose from trying to understand when the ratio appearing in certain dimension identities in~\cite{EW} is equal to 1.

\subsection{Generalized BGG resolutions} 
 
 The identities~\eqref{Dual-Cauchy}--\eqref{Littlewood-D} can be viewed as Euler characteristics of the Bernstein--Gelfand--Gelfand (BGG) complex for the trivial representation of each classical group. 
 The following example is an informal preview.

\begin{figure}[ht]
    \centering
    \ytableausetup{boxsize=.5em}

\[\begin{tikzpicture}
  \node (g) at (-4,6) {$(\ssD_4, \ssA_3)$};

  \node[label=right: {$\dim=1$}] (a) at  (-4,4.5) {$\bullet$};

  \node[label=right: {$\dim=6$}] (b) at  (-4,3) {$\phantom{*\,}\ydiagram[*(gray!40)]{1}*{1,1}^{\,*}$};

  \node[label=right: {$\dim=15$}] (c) at  (-4,1.5) {$\phantom{*\,}\ydiagram[*(gray!40)]{2}*{2,1,1}^{\,*}$};

  \node[label=right: {$\dim=10$}] (d) at (-5.5,0) {$\phantom{*\,}\ydiagram[*(gray!40)]{2,1+1}*{2,2,2}^{\,*}$};

  \node[label=right: {$\dim=10$}] (e) at (-2.5,0) {$\phantom{*\,}\ydiagram[*(gray!40)]{3}*{3,1,1,1}^{\,*}$};

  \node[label=right: {$\dim=15$}] (f) at  (-4,-1.5) {$\phantom{*\,}\ydiagram[*(gray!40)]{3,1+1}*{3,2,2,1}^{\,*}$};

  \node[label=right: {$\dim=6$}] (g) at  (-4,-3) {$\phantom{*\,}\ydiagram[*(gray!40)]{3,1+2}*{3,3,2,2}^{\,*}$};

  \node[label=right: {$\dim=1$}] (h) at  (-4,-4.5) {$\phantom{*\,}\ydiagram[*(gray!40)]{3,1+2,2+1}*{3,3,3,3}^{\,*}$};

  \draw [<-] (a) edge (b) (b) edge (c) (c) edge (d) (d) edge (f) (f) edge (g) (g) edge (h) (c) edge (e) (e) edge (f);

  \node (g) at (4,6) {$(\ssC_3, \ssA_2)$};

  \node[label=right: {$\dim=1$}] (a) at (4,4.5) {$\bullet$};

  \node[label=right: {$\dim=6$}] (b) at (4,3) {$\phantom{*\,}\ydiagram[*(gray!40)]{1+1}*{2}^{\,*}$};

  \node[label=right: {$\dim=15$}] (c) at (4,1.5) {$\phantom{*\,}\ydiagram[*(gray!40)]{1+2}*{3,1}^{\,*}$};

  \node[label=right: {$\dim=10$}] (d) at (2.5,0) {$\phantom{*\,}\ydiagram[*(gray!40)]{1+2,2+1}*{3,3}^{\,*}$};

  \node[label=right: {$\dim=10$}] (e) at (5.5,0) {$\phantom{*\,}\ydiagram[*(gray!40)]{1+3}*{4,1,1}^{\,*}$};

  \node[label=right: {$\dim=15$}] (f) at (4,-1.5) {$\phantom{*\,}\ydiagram[*(gray!40)]{1+3,2+1}*{4,3,1}^{\,*}$};

  \node[label=right: {$\dim=6$}] (g) at (4,-3) {$\phantom{*\,}\ydiagram[*(gray!40)]{1+3,2+2}*{4,4,2}^{\,*}$};

  \node[label=right: {$\dim=1$}] (h) at (4,-4.5) {$\phantom{*\,}\ydiagram[*(gray!40)]{1+3,2+2,3+1}*{4,4,4}^{\,*}$};

  \draw [<-] (a) edge (b) (b) edge (c) (c) edge (d) (d) edge (f) (f) edge (g) (g) edge (h) (c) edge (e) (e) edge (f);

\end{tikzpicture}\]
    \caption{Two isomorphic posets from Example~\ref{ex:D4 and C3}.
    Note that the Young diagrams on the right-hand side are precisely the ASC partitions with at most $3$ parts; meanwhile, each Young diagram on the left is the conjugate of its corresponding Young diagram on the right.  
    On the left-hand side, each Young diagram represents the (dual of the) $\gl_4$-module with corresponding highest weight; on the right-hand side, each Young diagram represents a $\gl_3$-module.  
    Each diagram is labeled with the dimension of the corresponding $\gl_4$- or $\gl_3$-module. 
    By viewing the Young diagrams as parabolic Verma modules and the arrows as the canonical maps between them, one can interpret the left (resp., right) poset as the BGG complex of the trivial representation of $\so_8$ (resp., $\sp_6$).}
    \label{fig:example intro}
\end{figure}
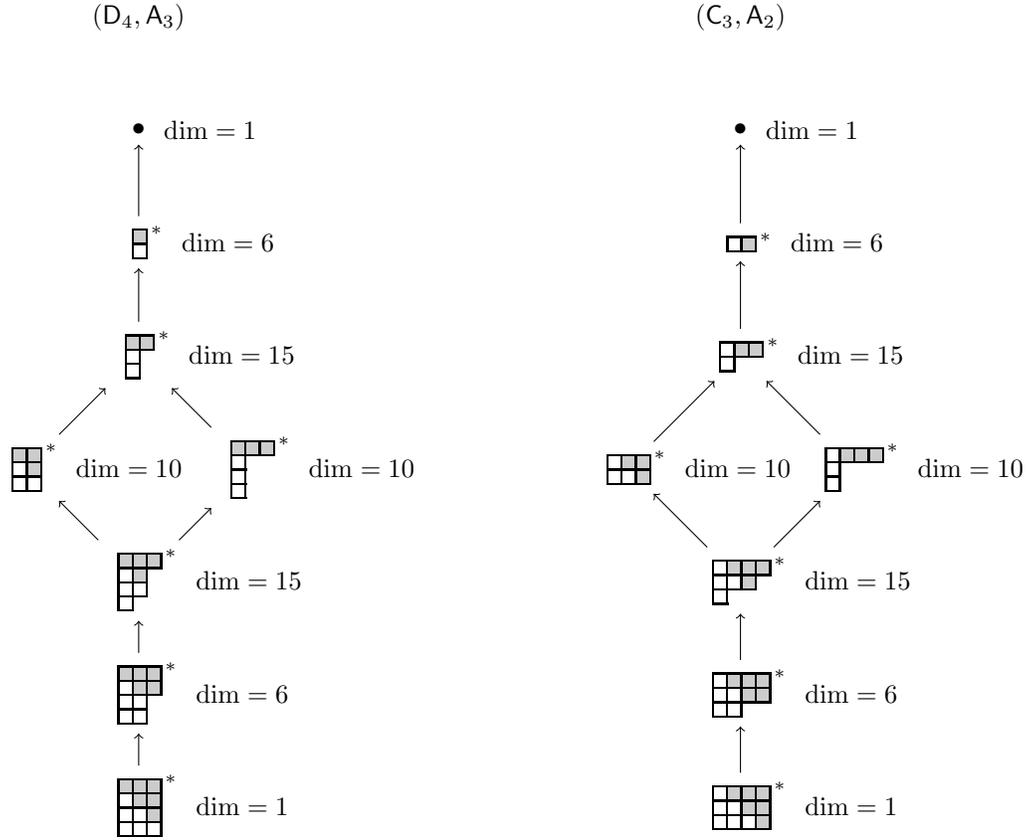

\begin{ex}
\label{ex:D4 and C3}
Throughout this example, we refer to Figure~\ref{fig:example intro}, which shows two posets. 
The poset elements are highest weights for $\gl_4$ and $\gl_3$, respectively.  
On the left side we consider the Hermitian symmetric pair $(\g,\k) = (\ssD_4,\ssA_3) = (\so_8, \gl_4)$; see Section~\ref{sub:Hermitian pairs} for the general theory of Hermitian symmetric pairs $(\g,\k)$ and parabolic subalgebras $\q$ of Hermitian type.  
The poset shown on the left-hand side represents the BGG--Lepowsky complex of the trivial representation of $\so_8$; that is to say, the complex is a free resolution in terms of parabolic Verma modules $N_{\pi^*} \coloneqq U(\g) \otimes_{U(\q)} F_{\pi^*}$, where $F_{\pi^*}$ is the (dual of the) simple $\gl_4$-module whose highest weight is the partition $\pi$.  
In the figure, we represent each parabolic Verma module $N_{\pi^*}$ by the Young diagram of the partition $\pi$ (decorated with the symbol $*$).
The empty diagram $\bullet$ on top therefore represents the first term $N_0$ in the resolution, while the diagram just below it represents the second term $N_{(1,1)^*}$, and so forth. 
Two Young diagrams at the same level in the resolution should be understood as the direct sum of the corresponding parabolic Verma modules.  
Each arrow denotes the canonical map between parabolic Verma modules.  
We also label the Young diagram of each $\pi$ with the dimension of the corresponding $\gl_4$-module $F_{\pi^*}$.  Similarly, on the right-hand side of Figure~\ref{fig:example intro}, we consider the Hermitian symmetric pair $(\ssC_3,\ssA_2) = (\sp_6,\gl_3)$.
Just as on the left, the poset represents the BGG complex of the trivial representation of $\sp_6$, where this time the Young diagrams are highest weights for $\gl_3$.

From Figure~\ref{fig:example intro}, we first observe that the poset for $(\ssD_4,\ssA_3)$ is isomorphic to that for $(\ssC_3, \ssA_2)$.
Second, each partition $\pi$ appearing in the left-hand poset occupies the same position as its conjugate partition $\pi'$ in the right-hand poset (i.e., the Young diagrams are transposes of each other).
More specifically, the partitions appearing on the left-hand side are precisely the partitions $\pi$ occurring in the Littlewood identity~\eqref{Littlewood-D}, for $n=4$;
likewise, their conjugate partitions appearing on the right-hand side are precisely the ASC partitions appearing in~\eqref{Littlewood-C}, for $n=3$.  
As it turns out, the alternating sums in~\eqref{Littlewood-D} and \eqref{Littlewood-C} are the Euler characteristics of these two BGG complexes.
We make another conspicuous observation: not only do the corresponding $\gl_4$- and $\gl_3$-modules have highest weights that are conjugate to each other, but they also have the same dimension.
This equality of dimensions is the aforementioned special case of Theorem~\ref{thm:dim GLn pairs}.

The upshot of these informal observations --- namely, the poset isomorphism $\pi \mapsto \pi'$ that preserves BGG complexes and the dimension of the $\k$-modules --- describes a phenomenon that we will make rigorous in Section~\ref{sec:Congruence}, by means of the notion of \emph{congruence of blocks}.
Using this language, the present example is a special case of the fact that the principal blocks for $(\ssD_{n+1},\ssA_n)$ and $(\ssC_n,\ssA_{n-1})$ are congruent.
\end{ex}

\subsection{Diagrams of Hermitian type} 

For us, the striking fact in Example~\ref{ex:D4 and C3} is that in this instance of congruence, the poset isomorphism is defined by taking conjugate partitions.  
Our goal in this project was to find other congruences of blocks (in the context of Hermitian symmetric pairs) with this intriguing property.  
We thus build upon the work of Armour \cite{Armour}, who observed the appearance of conjugate partitions in congruences of singular and regular blocks.  
To illustrate the main idea behind Section~\ref{s:Diagrams of Hermitian Type}, we have shaded the Young diagrams in Figure~\ref{fig:example intro} to show how they can be constructed via a ``stacking'' operation.  
On either side of Figure~\ref{fig:example intro}, if we consider only the shaded boxes of the diagrams, then moving downward in the Hasse diagram adds one shaded box, such that the shaded boxes form a shifted Young diagram.
 Note that the shaded part of each diagram is the same in both posets.  
 On the right-hand side, to make the Young diagram of $\pi$ itself, we stack the shaded diagram ``horiztonally,'' to the right of its transpose (the white boxes).  
 This stacking construction automatically produces an ASC partition $\pi$. 
 Likewise, on the left-hand side of Figure~\ref{fig:example intro}, each shaded diagram is stacked ``vertically,'' above its transpose, thereby producing the conjugate of an ASC partition.

\subsection{Congruence of blocks and conjugate partitions}  

Our main result (Theorem~\ref{thm:Cong and Conj}, summarized in Table~\ref{table:WC}) exhibits six infinite families of congruent blocks given by conjugate partitions, just as in Example~\ref{ex:D4 and C3}.  
The congruence in Example~\ref{ex:D4 and C3} is a special subfamily --- in some sense, a degenerate case --- in which the two resolutions are for finite-dimensional modules; in general, the complex on the left is the resolution of an infinite-dimensional $\g$-module (see \cite{EW}).  
In verifying the six families of congruences in Table~\ref{table:WC}, our primary tool is the process of \emph{Enright--Shelton reduction}. 
This reduction, which has a strong combinatorial flavor, produces a poset isomorphism by deleting certain coordinates in a weight of $\g$; the result of the reduction is therefore a weight of a Lie algebra $\g'$ with generally smaller rank than $\g$.  
(Even in Example~\ref{ex:D4 and C3}, we can see a sort of proto-Enright reduction from $\ssD_4$ to $\ssC_3$.)

\subsection{Hilbert series and generalized Littlewood identities} 

As an application of our main result, we write down explicit Hilbert series for the infinite-dimensional modules in each of our six families (see Table \ref{table:Hilbert-series}).  
For certain of the families --- namely, those in which the infinite-dimensional module is a Wallach representation of $\g$ --- we thereby recover the well-known Hilbert series of determinantal varieties.
By computing the Euler characteristic of BGG resolutions of the finite-dimensional $\g'$-modules, we derive six new families of identities (see Table~\ref{table:identities}) generalizing the classical identities~\eqref{Dual-Cauchy}--\eqref{Littlewood-D}.

 \subsection{Open problems and related work}  

Our dimension identities in Theorems~\ref{theorem:ID-dim-GLn} and \ref{thm:dim GLn pairs} raise further questions that we leave as open problems in Section~\ref{sec:open probs}.
 In a preliminary version of this paper, we posed the problem of finding bijective proofs of the dimension identities;
 this problem was since solved by Kumari~\cite{Kumari}, using Krattenthaler's bijective proof~\cite{KrattenthalerHCF} of Stanley's hook--content formula~\cite{Stanley}.
 Another natural problem is to find (and ultimately classify) other equalities among the dimensions of $\gl$-modules.  
 This problem is similar in flavor to that of classifying the equalities among binomial coefficients, in the work of Lind~\cite{Lind}, Singmaster~\cite{Singmaster}, and de Weger~\cite{deWeger}.
 Our main result in this paper also leads to the problem of classifying \emph{all} instances of congruent blocks in the context of Hermitian symmetric pairs; see Figure~\ref{fig:sporadic example} for one ``sporadic'' example lying outside the six families mentioned above.
 
The algebraic combinatorics literature is replete with variations of tableau- and poset-based approaches to representation theory; see~\cites{Kwon2018,HoweKimLee}, for example.
We especially highlight the papers by Sam--Snowden--Weyman~\cites{Sam-Weyman-2013,Sam-Weyman-2015}, which take a different approach to viewing the Littlewood identities as the Euler characteristics of BGG complexes.
Also related is the very recent preprint~\cite{Schreier-Aigner}, which studies the Littlewood identities from the perspective of growth diagrams. 

\section{Dimension identities for $\gl_{n}$- and $\gl_{n+m}$-modules}
\label{sec:Dimension IDs}

\subsection{Partitions}

A \emph{partition} is a finite, weakly decreasing sequence $\pi = (\pi_1, \ldots, \pi_\ell)$ of positive integers.
We write $|\pi| \coloneqq \sum_i \pi_i$.
Often we fix some $n \geq \ell$, and view $\pi$ as the $n$-tuple $(\pi_1, \ldots, \pi_\ell, 0, \ldots, 0)$.
In this setting, we write $\pi^*$ to denote the $n$-tuple obtained by negating and reversing the coordinates:
\begin{equation}
    \label{star notation}
    \pi^* \coloneqq (0, \ldots, 0, -\pi_\ell,\ldots,-\pi_1).
\end{equation}

The \emph{Young diagram} of a partition $\pi$ is a left-justified arrangement of boxes such that the $i$th row from the top contains $\pi_i$ many boxes.
We often identify a partition with its Young diagram.
Note that $|\pi|$ is the number of boxes in the Young diagram of $\pi$.  We define the \emph{rank}, denoted by $\rk\pi$, to be the length of the main diagonal of the Young diagram of $\pi$.
We write $\Par(p \times q)$ for the set of partitions whose Young diagram fits inside a rectangle with $p$ rows and $q$ columns.
If we do not wish to restrict the number of columns, then we write $\Par(p \times \infty)$.
The \emph{conjugate} partition of $\pi$ is the partition whose Young diagram is that of $\pi$ reflected about the main diagonal; 
we write $\pi'$ to denote the conjugate of $\pi$ (but see our disclaimer at the beginning of Section~\ref{sec:Congruence}).
We use the symbol $\bullet$ to denote the empty Young diagram, corresponding to the zero partition $0 \coloneqq (0, \ldots, 0)$.
We say that a partition has \emph{even rows} (resp., \emph{columns}) if all rows (resp., columns) of its Young diagram contain an even number of boxes.
Upon filling the boxes in a Young diagram with entries, we call the resulting object a \emph{tableau}.

A partition $\pi$ of rank $r$ can be uniquely described by its arm lengths $\al_1> \cdots > \al_r \geq 0$ and leg lengths $\be_1 > \cdots > \be_r \geq 0$, as follows.
Define $\al_i$ to be the number of boxes in the $i$th row strictly to the right of the main diagonal in the Young diagram of $\pi$; 
likewise, define $\be_j$ to be the number of boxes in the $j$th column strictly below the main diagonal.
In this way, one can denote a partition by its \emph{Frobenius symbol}, writing 
\[
\pi = (\al | \be) = (\al_1, \ldots, \al_r \mid \be_1, \ldots, \be_r).
\]
If $\pi = 0 = (\; \mid \;)$, then $\al$ and $\be$ are empty. Clearly if $\pi = (\al|\be)$, then $\pi' = (\be|\al)$.  
For a nonnegative integer $m$, we adopt the shorthand 
\[
\al + m \coloneqq (\al_1 + m, \ldots, \al_r + m).
\]  
For example, if $\pi = (\al | \be)$, then $(\al + m \mid \be)$ denotes the partition obtained by adding $m$ boxes to all the arms of $\pi$.
We observe that an ASC partition, introduced informally in Section~\ref{sub:first intro}, is a partition of the form $(\al + 1 \mid \al)$.

\subsection{Dimension identities}

Throughout the paper, we let $\F{\mu}{n}$ denote the finite-dimensional simple $\gl_n$-module with highest weight $\mu$, where $\mu$ is a weakly decreasing $n$-tuple of integers.  
In the proof below, we will write $(i,j) \in \pi$ to denote the box in the $i$th row (from the top) and $j$th column (from the left) of the Young diagram of a partition $\pi$.  
From each box $(i,j) \in \pi$ there emanates a \emph{hook}, which consists of all the boxes weakly to the right in row $i$ or weakly below in column $j$.
The corresponding \emph{hook length}, denoted by $h(i,j)$, is the number of boxes in the hook emanating from $(i,j)$.
The \emph{content} of a box is defined as $c(i,j) \coloneqq j-i$.  

\begin{theorem}
\label{theorem:ID-dim-GLn}
    Let $\pi = (\al | \be) \in \Par(p\times q)$, and let $m$ be a nonnegative integer.
    We have
    \begin{equation}
    \label{ID-dim-GLn}
    \dim \F{(\al+m\mid \be)}{p} \dim \F{(\be+m|\al)}{q} = \dim \F{(\al|\be+m)}{p+m} \dim \F{(\be|\al+m)}{q+m}.
    \end{equation}
\end{theorem}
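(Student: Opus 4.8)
The plan is to compute all four dimensions via the hook--content formula
$\dim \F{\lambda}{n} = \prod_{(i,j)\in\lambda} \frac{n + c(i,j)}{h(i,j)}$
(valid whenever $\lambda$ has at most $n$ parts), which is presumably why the statement is phrased in terms of contents and hook lengths. Write $H_\lambda \coloneqq \prod_{(i,j)\in\lambda} h(i,j)$ for the product of hook lengths and $P_\lambda(x) \coloneqq \prod_{(i,j)\in\lambda}(x + c(i,j))$ for the content polynomial, so that $\dim \F{\lambda}{n} = P_\lambda(n)/H_\lambda$. A preliminary check: since $m \ge 0$, each of the tuples $(\al+m\mid\be)$, $(\be+m\mid\al)$, $(\al\mid\be+m)$, $(\be\mid\al+m)$ is again a valid Frobenius symbol (its arm and leg sequences stay strictly decreasing and nonnegative), and so defines a genuine partition of the same rank $r$ as $(\al\mid\be)$; moreover $(\al\mid\be)\in\Par(p\times q)$ forces $\be_1 \le p-1$ and $\al_1 \le q-1$, so these four partitions have at most $p$, $q$, $p+m$, $q+m$ parts respectively, and all four modules in \eqref{ID-dim-GLn} are nonzero.

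Step two isolates the hook lengths. The identity $(\gamma\mid\delta)' = (\delta\mid\gamma)$ for Frobenius symbols shows that $(\al\mid\be+m)$ is the conjugate of $(\be+m\mid\al)$ and that $(\be\mid\al+m)$ is the conjugate of $(\al+m\mid\be)$. Since conjugating a Young diagram only permutes the multiset of hook lengths, $H_{(\al\mid\be+m)} = H_{(\be+m\mid\al)}$ and $H_{(\be\mid\al+m)} = H_{(\al+m\mid\be)}$; multiplying these, the denominators on the two sides of \eqref{ID-dim-GLn} coincide. Thus \eqref{ID-dim-GLn} is equivalent to the purely combinatorial content identity $P_{(\al+m\mid\be)}(p)\, P_{(\be+m\mid\al)}(q) = P_{(\al\mid\be+m)}(p+m)\, P_{(\be\mid\al+m)}(q+m)$.

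For the content identity I would prove the sharper factorwise equalities $P_{(\al+m\mid\be)}(p) = P_{(\al\mid\be+m)}(p+m)$ and $P_{(\be+m\mid\al)}(q) = P_{(\be\mid\al+m)}(q+m)$, both instances of a single computation in Frobenius coordinates. Given a partition $(\gamma\mid\delta)$ of rank $r$, partition its boxes into $r$ groups, the $i$th consisting of the diagonal box $(i,i)$ (content $0$), the $i$th arm (contents $1,\dots,\gamma_i$), and the $i$th leg (contents $-1,\dots,-\delta_i$); then the shifted contents $x + c(i,j)$ over the $i$th group run through exactly the integers of the interval $[\,x-\delta_i,\ x+\gamma_i\,]$, so $P_{(\gamma\mid\delta)}(x) = \prod_{i=1}^{r}\prod_{k=-\delta_i}^{\gamma_i}(x+k)$. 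Taking $(\gamma\mid\delta) = (\al+m\mid\be)$, $x=p$ gives $[\,p-\be_i,\ p+\al_i+m\,]$ in the $i$th group, while $(\gamma\mid\delta) = (\al\mid\be+m)$, $x=p+m$ gives $[\,p+m-(\be_i+m),\ p+m+\al_i\,] = [\,p-\be_i,\ p+\al_i+m\,]$: the identical interval, group by group. Hence the two content polynomials agree at these arguments, and the $q$-sided equality follows the same way (swapping $\al\leftrightarrow\be$, $p\leftrightarrow q$). Combining this with the hook-length cancellation of Step two yields \eqref{ID-dim-GLn}.

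I do not expect a real obstacle here. The one idea that makes the argument go through is the observation in Step two that the potentially unwieldy hook-length products — which are \emph{not} individually matched across \eqref{ID-dim-GLn} — pair up under conjugation and cancel, leaving only the content identity, which the Frobenius ``interval-filling'' picture settles at once.
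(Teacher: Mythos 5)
Your proposal is correct and follows essentially the same route as the paper: apply the hook--content formula, match the content products factor by factor using the principal-hook decomposition (the paper writes the same interval products as factorial ratios $\frac{(n+\al_k)!}{(n-\be_k-1)!}$), and cancel the hook-length products via the conjugation pairing $(\al\mid\be+m)=(\be+m\mid\al)'$. No substantive differences to report.
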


\begin{proof}
    By the hook--content formula~\cite{Stanley}*{Thm.~7.21.2}, we have
    \begin{equation}
    \label{hookcontentformula}
        \dim \F{\pi}{n} = \prod_{(i,j) \in \pi} \frac{n + c(i,j)}{h(i,j)}.
    \end{equation}
    We first rewrite the product of the numerators in~\eqref{hookcontentformula} in terms of hooks: 
    letting $h_k$ denote the hook emanating from box $(k,k)$, we have
    \[
    \prod_{(i,j) \in h_k} (n + c(i,j)) = \prod_{\ell = -\be_k}^{\al_k} (n + \ell) = \frac{(n+\al_k)!}{(n-\be_k-1)!}
    \]
    and so, putting $r = \rk \pi$, we have
    \begin{equation}\label{hookrewrite}
    \prod_{(i,j) \in \pi} (n + c(i,j)) = \prod_{k=1}^r \frac{(n+\al_k)!}{(n-\be_k-1)!}.
    \end{equation}
Note that $\rk(\alpha+m \mid \beta) = \rk(\alpha \mid \beta+m) = \rk(\alpha|\beta) = r$.
Thus, using~\eqref{hookcontentformula} and~\eqref{hookrewrite}, we can rewrite the left-hand side of~\eqref{ID-dim-GLn} as
\begin{equation}\label{LHS}
\frac{\displaystyle\prod_{k =1}^r \frac{(p+\al_k + m)!}{(p-\be_k - 1)!}}{\displaystyle\prod_{\mathclap{(i,j) \in (\al+m|\be)}} h(i,j)} \: \frac{\displaystyle\prod_{k =1}^r \frac{(q+\be_k + m)!}{(q-\al_k -1)!}}{\displaystyle\prod_{\mathclap{(i',j') \in (\be+m|\al)}} h(i',j')},
\end{equation}
and the right-hand side of~\eqref{ID-dim-GLn} as
\begin{equation}    \label{RHS}
\frac{\displaystyle\prod_{k =1}^r \frac{(p+m+\al_k)!}{(p+m-\be_k-m - 1)!}}{\displaystyle\prod_{\mathclap{(i',j') \in (\al|\be+m)}} h(i',j')} \:
\frac{\displaystyle\prod_{k =1}^r \frac{(q+m +\be_k)!}{(q+m-\al_k - m -1)!}}{\displaystyle\prod_{\mathclap{(i,j) \in (\be|\al+m)}} h(i,j)}.
\end{equation}
Clearly the numerator in~\eqref{LHS} equals that in~\eqref{RHS}. 
 Moreover, since $(\al \mid \be + m) = (\be + m \mid \al)'$, and since the multiset of hook lengths is preserved under conjugation of partitions, the denominators in~\eqref{LHS} and~\eqref{RHS} are also equal. 
 Hence the left- and right-hand sides of~\eqref{ID-dim-GLn} are equal, and the theorem follows.
\end{proof}

\begin{theorem}
\label{thm:dim GLn pairs}
Let $m$ be a nonnegative integer.
Then $\dim \F{\pi}{n} = \dim \F{\pi'}{n+m}$ for every nonnegative integer $n$ if and only if $\pi$ has the form $(\al + m \mid \al)$.
\end{theorem}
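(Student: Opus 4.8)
\emph{Proof proposal.} The plan is to translate the statement into a combinatorial condition on the multiset of box contents of $\pi$, and then verify that condition by peeling off principal hooks. Write $C(\pi) \coloneqq \{\, c(i,j) : (i,j)\in\pi \,\}$ for the multiset of contents. Applying the hook--content formula~\eqref{hookcontentformula} to both $\pi$ and $\pi'$ and using the bijection $(i,j)\leftrightarrow(j,i)$ between the boxes of $\pi$ and those of $\pi'$ --- under which hook lengths are matched and contents are negated --- one sees that
\[
\dim \F{\pi}{n} = \frac{1}{H}\prod_{(i,j)\in\pi}\bigl(n+c(i,j)\bigr), \qquad \dim \F{\pi'}{n+m} = \frac{1}{H}\prod_{(i,j)\in\pi}\bigl(n+m-c(i,j)\bigr),
\]
where $H \coloneqq \prod_{(i,j)\in\pi}h(i,j)$ is common to both. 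Each side is a polynomial in $n$ of degree $|\pi|$ with leading coefficient $1/H$, so the two dimensions agree for every $n\in\mathbb N$ if and only if the two monic products agree as polynomials, equivalently if and only if they have the same multiset of roots, i.e.\ $\{-c : c\in C(\pi)\} = \{c-m : c\in C(\pi)\}$. Negating, this says exactly that $C(\pi)$ is stable under the involution $c\mapsto m-c$; call such a multiset \emph{symmetric about $m/2$}. It therefore remains to prove that $C(\pi)$ is symmetric about $m/2$ if and only if $\pi=(\al+m\mid\al)$.

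For the ``if'' direction, decompose the Young diagram of $\pi=(\al+m\mid\al)$ into its principal hooks $h_1,\dots,h_r$, so that $\pi=\bigsqcup_k h_k$ as sets of boxes. The hook $h_k$ consists of the diagonal box $(k,k)$ of content $0$, the $\al_k+m$ boxes of row $k$ to its right with contents $1,2,\dots,\al_k+m$, and the $\al_k$ boxes of column $k$ below it with contents $-1,-2,\dots,-\al_k$; thus the contents of $h_k$ form the set $\{-\al_k,-\al_k+1,\dots,\al_k+m\}$ of consecutive integers, which is symmetric about its midpoint $\tfrac{(-\al_k)+(\al_k+m)}{2}=\tfrac{m}{2}$. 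A disjoint union of multisets each symmetric about $m/2$ is again symmetric about $m/2$, so $C(\pi)$ has the required symmetry. (Alternatively, the ``if'' direction follows from Theorem~\ref{theorem:ID-dim-GLn} by setting $\be=\al$ and $p=q=n$, which gives $\bigl(\dim\F{(\al+m\mid\al)}{n}\bigr)^2=\bigl(\dim\F{(\al\mid\al+m)}{n+m}\bigr)^2$ and hence equality after taking positive square roots; but the hook-by-hook argument is also what sets up the converse.)

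For the ``only if'' direction, I would induct on $\rk\pi$. If $\rk\pi=0$ then $\pi=0=(\;\mid\;)$ has the required form with $\al$ empty. Otherwise write $\pi=(\al_1,\dots,\al_r\mid\be_1,\dots,\be_r)$. The largest content appearing in $\pi$ is $\al_1$ (attained by the rightmost box of row $1$, since $\pi_1=\al_1+1$), and the smallest is $-\be_1$ (attained by the bottom box of column $1$, since $\pi$ has $\be_1+1$ parts); since a finite multiset symmetric about $m/2$ has its maximum and minimum summing to $m$, we get $\al_1+(-\be_1)=m$, i.e.\ $\al_1=\be_1+m$. Now let $\hat\pi\coloneqq(\al_2,\dots,\al_r\mid\be_2,\dots,\be_r)$ be the partition obtained by deleting the first row and column of $\pi$. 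A box $(i,j)$ of $\pi$ with $i,j\ge2$ keeps its content $j-i$ when viewed in $\hat\pi$, while the boxes deleted from row $1$ and column $1$ have precisely the contents $-\be_1,-\be_1+1,\dots,\al_1$, each once; hence $C(\pi)=\{-\be_1,\dots,\al_1\}\sqcup C(\hat\pi)$ as multisets. Since $\al_1=\be_1+m$, the first summand is symmetric about $m/2$, and therefore so is $C(\hat\pi)$ (the involution $c\mapsto m-c$ preserves $C(\pi)$ and the removed set, hence their difference). By the inductive hypothesis $\hat\pi=(\hat\al+m\mid\hat\al)$ for some $\hat\al$, i.e.\ $\al_k=\be_k+m$ for all $k\ge2$; together with $\al_1=\be_1+m$ this yields $\pi=(\al+m\mid\al)$, completing the induction.

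The only place that calls for real care is the first step --- the reduction from ``the two dimensions agree for every $n$'' to the content-symmetry condition --- where one must check that conjugation matches up the hook lengths, that the two resulting polynomials in $n$ are monic of the same degree $|\pi|$ (so that agreement on the infinite set $\mathbb N$ forces them to be equal, and hence to have the same roots), and that the resulting equality of root multisets is exactly symmetry of $C(\pi)$ about $m/2$. Everything after that is elementary hook/content bookkeeping, and I expect no serious obstacle there.
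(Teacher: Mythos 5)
Your proposal is correct and follows essentially the same route as the paper: both reduce the all-$n$ dimension equality to the condition that the content multiset of $\pi$ is preserved by $c \mapsto m-c$ (via the hook--content formula, using that conjugation preserves hook lengths and negates contents), and then recover $\al_i = \be_i + m$ by peeling off principal hooks --- your removal of the first row and column is exactly the paper's deletion of the outermost hook's contents, recast as induction on $\rk\pi$. The only cosmetic difference is that the paper proves the ``if'' direction by specializing Theorem~\ref{theorem:ID-dim-GLn} and taking square roots (the route you mention parenthetically), whereas your primary ``if'' argument via hook contents is an equivalent self-contained variant.
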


\begin{proof}
    Suppose there is some $\alpha$ such that $\pi = (\al + m \mid \al)$.
    In Theorem~\ref{theorem:ID-dim-GLn}, by setting $\al = \be$ and $p=q=n$, and then taking the square root of both sides of~\eqref{ID-dim-GLn}, we have $\dim \F{\pi}{n} = \dim \F{\pi'}{n+m}$.
    Conversely, suppose that $\dim \F{\pi}{n} = \dim \F{\pi'}{n+m}$ for every nonnegative integer $n$.
    Let $\pi = (\al | \be)$.
    Since conjugate partitions have the same multiset of hook lengths, and since $c(j,i) = -c(i,j)$, the hook--content formula~\eqref{hookcontentformula} yields
\[
\prod_{(i,j) \in \pi} n + c(i,j) = \prod_{(i,j) \in \pi} n+m - c(i,j).
\]
Treating each side as a polynomial in $n$, by unique factorization we must have the equality of multisets
\begin{equation}
\label{multiset-content}
C_1 \coloneqq \{c(i,j) \mid (i,j) \in \pi\} = \{ m - c(i,j) \mid (i,j) \in \pi\}.
\end{equation}
Thus $\al_1 = \max(C_1)$ and $-\be_1 = \min(C_1)$. But by~\eqref{multiset-content}, we must also have $\min(C_1) = m -\al_1 = -\be_1$, and thus $\al_1 = \be_1 + m$. 
Since the outermost hook $h_1$ of $\pi$ contains exactly one box with content $c$ for each $-\be_1 \leq c \leq \al_1$, we delete these contents from $C_1$ to obtain the new multiset
\[
C_2 \coloneqq C_1 \setminus \{-\be_1, -\be_1 + 1, \ldots, \al_1\}.
\]
Just as before, $\al_2 = \max(C_2)$ and $-\be_2 = \min(C_2) = m - \al_2$, and so $\al_2 = \be_2 + m$.  
Continuing in this way to define each new multiset $C_{i+1} \coloneqq C_i \setminus \{ -\be_i, -\be_i + 1, \ldots, \al_i\}$, we obtain $\al_i = \be_i + m$ for all $1 \leq i \leq \rk\pi$. 
 Therefore $\pi = (\be + m \mid \be)$, which completes the proof.
 \end{proof}

\begin{rem}
We also have the following $q$-analogue of Theorem~\ref{thm:dim GLn pairs}:
\[
s_{\pi}(q^{n-1}, q^{n-3}, \ldots, q^{3-n}, q^{1-n}) = s_{\pi'}(q^{n+m-1}, q^{n+m-3}, \ldots, q^{3-n-m}, q^{1-n-m})
\]
when $\pi = (\al + m \mid \al)$.
In terms of representation theory, this means that the modules in Theorem~\ref{thm:dim GLn pairs} not only have the same dimension, but are equivalent as $\sl_2$-modules upon the restriction of $\gl_n$ and $\gl_{n+m}$ to their principal $\sl_2$ subalgebra.
\end{rem}

\section{Generalized BGG resolutions}
\label{section:ID's and BGG}

\subsection{\!\!\!} With Theorems~\ref{theorem:ID-dim-GLn} and~\ref{thm:dim GLn pairs} in mind, we  recall the three classical identities~\eqref{Dual-Cauchy}, \eqref{Littlewood-C}, and \eqref{Littlewood-D} from the introduction. 
We observe the similarity between~\eqref{Dual-Cauchy} and Theorem~\ref{theorem:ID-dim-GLn} in the case $m=0$, since the two highest weights on the left-hand side of~\eqref{ID-dim-GLn} are conjugates of each other, with the first being an element of $\Par(p \times q)$.  
Likewise the sums in the Littlewood identities~\eqref{Littlewood-C} and \eqref{Littlewood-D} range over the same ASC partitions that appear as highest weights for the $\gl_n$-modules in Theorem~\ref{thm:dim GLn pairs}, in the case $m=1$. 
In the remainder of this section, we will explain how each of the identities~\eqref{Dual-Cauchy},~\eqref{Littlewood-C}, and~\eqref{Littlewood-D} can be viewed as the Euler characteristic of the BGG complex of the trivial representation; see also~\cites{Sam-Weyman-2013,Sam-Weyman-2015} for a different approach.

\subsection{Hermitian symmetric pairs}
\label{sub:Hermitian pairs}

Let $\g_{\R}$ be a real simple noncompact reductive Lie algebra, with Cartan decomposition $\g_{\R} = \k_{\R} \oplus \p_{\R}$.
We write the complexified Cartan decomposition $\g = \k \oplus \p$.
From the general theory, there exists a distinguished element $h_0 \in \mathfrak{z}(\k)$ such that $\operatorname{ad} h_0$ acts on $\g$ with eigenvalues $0$ and $\pm 1$.
We thus have a triangular decomposition $\g = \p^- \oplus \k \oplus \p^+$, where $\p^{\pm} = \{ x \in \g \mid [h_0, x] = \pm x\}$.
The subalgebra $\q = \k \oplus \p^+$ is a maximal parabolic subalgebra of $\g$, with Levi subalgebra $\k$ and abelian nilradical $\p^+$.  
Parabolic subalgebras of complex simple Lie algebras that arise in this way are called parabolic subalgebras of \emph{Hermitian type}, and $(\g,\k)$ is called a \emph{Hermitian symmetric pair}.

In this paper, we focus our attention on the three families of Hermitian symmetric pairs that arise in the dual pair setting~\cite{Howe89}.
We call these Types I, II, and III, as follows:
\[
\renewcommand{\arraystretch}{1.5}
\begin{array}{|c|c|c|c|}
\hline
    & (\g_\R, \k_\R) & (\g, \k) & \text{Cartan type} \\ \hline
    \text{Type I}  & (\su(p,q), \: \mathfrak{s}(\mathfrak{u}(p) \oplus \mathfrak{u}(q))) &(\sl_{p+q},\:\mathfrak{s}(\gl_p \oplus \gl_q)) & (\ssA_{p+q-1},\:\ssA_{p-1}\times\ssA_{q-1}) \\ \hline

    \text{Type II}  & (\sp(2n,\R), \:\mathfrak{u}(n)) & (\sp_{2n},\:\gl_n) & (\ssC_n,\:\ssA_{n-1}) \\ \hline

    \text{Type III}  & (\so^*(2n), \: \mathfrak{u}(n)) & (\so_{2n},\:\gl_n) & (\ssD_n,\:\ssA_{n-1})

    \\ \hline
\end{array}
\]
For each type above, we give explicit realizations of $\g$, $\k$, and $\p^+$ on the left-hand side of Table~\ref{table:Type123}.  In the $\g$ column, for Type I, the block matrix $\left[\begin{smallmatrix}A&B\\C&D\end{smallmatrix}\right]$ has dimensions $(p+q)\times(p+q)$, while for Types II and III it has dimensions $2n\times 2n$.
In the $\p^+$ column, we write $\M_{p,q}$ for the space of complex $p \times q$ matrices, while $\SM_n$ (resp., $\AM_n$) denotes the symmetric (resp., alternating) complex $n \times n$ matrices.  In the next column, we write down the character of $S(\p^-) \cong \C[\p^+]$ as a $\k$-module, where
\begin{equation}
    \label{x_i y_j}
    x_i \coloneqq e^{-\ep_i} \quad \text{and} \quad y_j \coloneqq e^{\ep_j},
\end{equation}
with $e$ a formal indeterminate and with $\ep_i$ to be defined in the following subsection.
In Table~\ref{table:Type123}, we also include the well-known expansions of these characters in terms of Schur polynomials:
in Type I, this yields the Cauchy identity, while in Types II and III the identities were recorded by Littlewood~\cite{Littlewood}*{\S11.9} on the same page as his identities~\eqref{Littlewood-C} and~\eqref{Littlewood-D}.

\begin{table}
\centering
\resizebox{\linewidth}{!}{
\begin{tblr}{colspec={|Q[m,c]|Q[m,c]|Q[m,c]|Q[m,c]|Q[m,c]|Q[m,c]|Q[m,c]|},stretch=1.5}

\hline

Type & $\g$ & {$\g =$ \\ $\left\{\left[\begin{smallmatrix}A&B\\C&D\end{smallmatrix}\right] \: \middle| \: \ldots \right\}$} & {$\k$ \\ $\left\{\left[\begin{smallmatrix}A&0\\0&D\end{smallmatrix}\right]\right\}$} & {$\p^+$ \\ $ \left\{\left[\begin{smallmatrix}
    0 & B \\ 0 & 0
\end{smallmatrix}\right]\right\}$} & $\ch S(\p^-)$ & $\nu \in \ldots$ \\ \hline[2pt]

I & $\sl_{p+q}$ & {$\tr A + \tr D = 0$} & $\mathfrak{s}(\gl_p \oplus \gl_q)$ & $\M_{p,q}$  & $\displaystyle\prod_{\substack{1 \leq i \leq p, \\ 1 \leq j \leq q \phantom{,}}}^{\phantom{A}} \frac{1}{1-x_i y_j} = \sum_\nu s_\nu(\bfx) s_\nu(\bfy)$ & $\Par(\min\{p,q\} \times \infty)$ \\ \hline

II & $\sp_{2n}$ & {$A=-D^t,$ \\ $B=B^t,$ \\$C=C^t$} & $\gl_n$ & $\SM_n$ & $\displaystyle\prod_{1 \leq i \leq j \leq n} \frac{1}{1-x_i x_j} = \sum_\nu s_\nu(\bfx)$ & {$\Par(n \times \infty),$ \\ even rows}\\ \hline

III & $\so_{2n}$ & {$A=-D^t,$ \\ $B=-B^t,$ \\ $C=-C^t$} & $\gl_n$ & $\AM_n$ & $\displaystyle\prod_{1 \leq i < j \leq n } \frac{1}{1-x_i x_j} = \sum_\nu s_\nu(\bfx)$ & {$\Par(n \times \infty),$ \\ even columns}\\ \hline

\end{tblr}
}
\caption{Summary of data for Hermitian symmetric pairs of Types I, II, and III.
For Type I, we write $\bfx = (x_1, \ldots, x_p)$ and $\bfy = (y_1, \ldots, y_q)$; for Types II and III, $\bfx = (x_1, \ldots, x_n)$.}
\label{table:Type123}
\end{table}

\subsection{Roots and weights}

Suppose $(\g,\k)$ is a Hermitian symmetric pair, and let $\h$ be a Cartan subalgebra of both $\g$ and $\k$.
Let $\Phi$ be the root system of the pair $(\g,\h)$, and $\g_\al$ the root space corresponding to $\al \in \Phi$. 
Then put $\Phi(\k) \coloneqq \{ \al \in \Phi \mid \g_\al \subseteq \k\}$ and $\Phi(\p^+) \coloneqq \{ \al \in \Phi \mid \g_\al \subseteq \p^+\}$.  
 Choose a set $\Phi^+$ of positive roots so that $\Phi(\p^+)\subseteq \Phi^+$, and let $\Phi^- \coloneqq -\Phi^+$ denote the negative roots.
 We write $\Phi^{\!+\!}(\k)$ for $\Phi^+ \cap \Phi(\k)$.
 Let $\Pi 
 = \{\al_1,\ldots,\al_r\} \subset \Phi^+$ denote the set of simple roots.  
 We write $\langle \; , \; \rangle$ to denote the nondegenerate bilinear form on $\h^*$ induced from the Killing form of $\g$.  
 For $\al \in \Phi$, we write $\al^\vee \coloneqq 2\al / \langle \al,\al \rangle$.  
 We define the fundamental weights $\omega_i$ such that $\langle \omega_i,\al_j^\vee\rangle = \delta_{ij}$.
 We let $\rho \coloneqq \sum_{i} \omega_i$.

In the list below, we give all these weights in explicit coordinates, where $\ep_i \in \h^*$ is the functional mapping a diagonal matrix to its $i$th diagonal entry.
In general, we express weights in $\h^*$ as tuples in these $\ep$-coordinates:
thus in Type I we write a weight as $(a_1, \ldots, a_p ; b_1, \ldots, b_q)$, while in Types II and III we write a weight as $(a_1, \ldots, a_n)$.

\begin{alignat*}{3}
 & \text{Type I ($\g = \sl_{p+q}$):} \quad  &  
\Phi^{\!+\!}(\k) &= \{ \ep_i - \ep_j \mid 1 \leq i < j \leq p \: \text{ or } \: p+1 \leq i < j \leq p+q\},  \\
    &  & \Phi(\p^+) & = \{ \ep_i - \ep_j \mid 1 \leq i \leq p \: \text{
 and } \: p+1 \leq j \leq p+q\},\\
     &  & \Pi & = \{\al_i = \ep_i - \ep_{i+1} \mid 1 \leq i \leq p+q-1\},\\
     & & \omega_i & = \ep_1 + \cdots + \ep_i, \quad \text{for } 1 \leq i \leq p + q -1,\\
     & & \rho & = (p+q-1, \ldots, q+1, q; \, q-1, \ldots, 2, 1, 0).\\[1em]
& \text{Type II ($\g = \sp_{2n}$):} \quad &  
\Phi^{\!+\!}(\k) & = \{ \ep_i - \ep_j \mid 1 \leq i < j \leq n \},  \\
    & & \Phi(\p^+) & = \{ \ep_i + \ep_j \mid 1 \leq i \leq j \leq n \},\\
    & & \Pi & = \{ \al_i = \ep_i - \ep_{i+1} \mid 1 \leq i \leq n-1\} \cup \{ \al_n = 2 \ep_n\},\\
    & & \omega_i & = \ep_1 + \cdots + \ep_i, \quad \text{for } 1 \leq i \leq n,\\
    & & \rho & = (n, \ldots, 3, 2, 1).\\[1em]
& \text{Type III ($\g = \so_{2n}$):} \quad  &  
\Phi^{\!+\!}(\k) & = \{ \ep_i - \ep_j \mid 1 \leq i < j \leq n \},  \\
    & & \Phi(\p^+) & = \{ \ep_i + \ep_j \mid 1 \leq i < j \leq n \},\\
  & & \Pi & = \{\al_i = \ep_i - \ep_{i+1} \mid 1 \leq i \leq n-1\} \cup \{ \al_n = \ep_{n-1}+\ep_n\},\\
  & & \omega_i & = \begin{cases}
      \ep_1 + \cdots + \ep_i,& 1 \leq i \leq n-2,\\
      \frac{1}{2}(\ep_1 + \cdots + \ep_{n-1} - \ep_n), & i = n-1,\\
      \frac{1}{2}(\ep_1 + \cdots + \ep_n), & i = n,
  \end{cases} \\
  & & \rho & = (n-1, \ldots, 2, 1, 0).
\end{alignat*}

Note that $\Phi(\p^+)$ inherits the usual poset structure from $\h^*$, where $\mu \leq \la$ if and only if $\la - \mu$ can be written as a nonnegative integer combination of positive roots.  
There is a unique element of $\Pi \cap \Phi(\p^+)$, namely the noncompact simple root, which is the smallest element of the poset $\Phi(\p^+)$.

Let $\W$ be the Weyl group of the pair $(\g,\h)$, and $\W(\k) \subseteq \W$ the Weyl group of the pair $(\k,\h)$.
For each $w \in \W$, let 
\[
\Phi_w \coloneqq \Phi^+ \cap w\Phi^-.
\]
Then we have the usual length function $\ell$ on $\W$, whereby 
\begin{equation}
\label{length-size-Delta-w}
\ell(w) = |\Phi_w|.
\end{equation}
Following Kostant~\cite{Kostant}*{(5.13.1)}, we define
\[
\kW \coloneqq \{ w \in \W \mid \Phi_w \subseteq \Phi(\p^+)\},
\]
the subset of minimal-length right coset representatives of $\W(\k)$ in $\W$.
To refine $\kW$ by length, we further define
\[
\kW_i \coloneqq \{w \in \kW \mid \ell(w) = i\}.
\]
The ``dot'' action by the Weyl group is defined as follows:
\[
w \cdot \la \coloneqq w(\la + \rho) - \rho
\]
for $w \in \W$ and $\la \in \h^*$.
We define the reflection $s_\al:\h^* \longrightarrow \h^*$ by $s_\al(\la) \coloneqq \la - \langle \la, \al^\vee \rangle \al$.
Let 
\begin{align*} 
    \Lambda^+ &\coloneqq \{ \la \in \h^* \mid \langle \la+\rho, \: \al^\vee\rangle \in \mathbb{Z}_{>0} \text{ for all } \al \in \Phi^+\},\\
    \Lplusk &\coloneqq \{ \la \in \h^* \mid \langle \la+\rho, \: \al^\vee\rangle \in \mathbb{Z}_{>0} \text{ for all } \al \in \Phi^{\!+\!}(\k)\}
\end{align*}
denote the sets of dominant integral weights with respect to $\Phi^+$ and $\Phi^{\!+\!}(\k)$, respectively.

\subsection{BGG--Lepowsky resolutions}

For $\mu \in \Lplusk$, let $F_{\mu}$ be the finite-dimensional simple $\k$-module with highest weight $\mu$.
Then $F_{\mu}$ is also a module for $\q = \k\oplus\p^+$, with $\p^+$ acting by zero.
We define the \emph{parabolic Verma module}
\begin{equation}
    \label{Verma}
    N_\la \coloneqq U(\g) \otimes_{U(\q)} F_{\la}.
\end{equation}
When $(\g,\k)$ is a Hermitian symmetric pair, $\p^-$ is abelian and therefore we can identify $U(\p^-)$ with $S(\p^-)$.  
By the PBW theorem, we thus obtain
\begin{equation}
    \label{N lambda}
    N_\la \cong S(\p^-) \otimes F_{\la}
\end{equation}
as a $\k$-module.
Recall that the character of $S(\p^-)$ is given in Table~\ref{table:Type123}.
In Type I, upon restriction to $\k = \mathfrak{s}(\gl_p \oplus \gl_q)$, a weight $(\mu;\nu)$ becomes an ordered pair $(\mu,\nu)$, which (assuming it lies in $\Lplusk$) we will also write as $\mu \otimes \nu$, reflecting the fact that $F_{\mu \otimes \nu} =  \F{\mu}{p} \otimes \F{\nu}{q}$.
Due to our conventions~\eqref{star notation} and~\eqref{x_i y_j} regarding highest weights of contragredient representations, we have $F_{\mu^*} = (F_\mu)^*$, and the characters of finite-dimensional simple $\k$-modules are given by Schur polynomials as follows, where $\mu$ and $\nu$ are partitions:
\begin{alignat}{3}
    &\text{Type I:} \qquad & \ch F_{\mu^* \otimes \nu} & = \ch \left( \F{\mu^*}{p} \otimes \F{\nu}{q} \right) = s_\mu(\bfx) s_\nu(\bfy), \label{characters as Schur polys}\\
    &\text{Types II and III:} \qquad & \ch F_{\mu^*} & = \ch \F{\mu^*}{n} = s_\mu(\bfx), \nonumber
\end{alignat}
where $\bfx = (x_1, \ldots, x_p)$ and $\bfy = (y_1, \ldots, y_q)$ in Type I, and $\bfx = (x_1, \ldots, x_n)$ in Types II and III.

Let $\la \in \Lambda^+$, and let $L_{\la}$ denote the finite-dimensional simple $\g$-module with highest weight $\la$. 
Generalizing the Bernstein--Gelfand--Gelfand (BGG) resolution in terms of ordinary Verma modules~\cite{BGG}, Lepowsky~\cite{Lepowsky} showed that there exists a resolution for $L_{\la}$ in terms of parabolic Verma modules: 
\[
0 \longrightarrow N_s \longrightarrow N_{s-1} \longrightarrow \cdots \longrightarrow N_1 \longrightarrow N_0 \longrightarrow L_{\la} \longrightarrow 0,
\]
where
\[
    N_i = \bigoplus_{\mathclap{w \in \kW_i}} N_{w\cdot\lambda}
\]
and $s = |\Phi(\p^+)|$.  
We now state the result (to be proved in Section~\ref{s:Diagrams of Hermitian Type}) that connects these BGG resolutions (where $\la = 0$) with the three classical identities~\eqref{Dual-Cauchy}--\eqref{Littlewood-D} above:

\begin{prop}
\label{prop:BGG}
    In each of Types I--III, the trivial $\g$-module $\mathbbm{1}$ has the generalized BGG resolution
    \[
    0 \longrightarrow N_s \longrightarrow N_{s-1} \longrightarrow \cdots \longrightarrow N_1 \longrightarrow N_0 \longrightarrow \mathbbm{1} \longrightarrow 0,
    \]
with the terms given as follows:
\[
\def\arraystretch{2.5}
\begin{array}{lll}

\text{\normalfont{Type I:}} &
\displaystyle N_i = \bigoplus_{\substack{\pi \in \Par(p \times q),\\ |\pi| = i}} N_{\pi^* \otimes \pi'} & \text{\normalfont{and }} s= pq.\\

\text{\normalfont{Type II:}} & \displaystyle N_i = \bigoplus_{\substack{\pi = (\al +1 | \al),\\ \al_1 < n, \\ |\al+1|=i}} N_{\pi^*} & \text{\normalfont{and }} s= \binom{n+1}{2}.\\

\text{\normalfont{Type III:}} & \displaystyle N_i = \bigoplus_{\substack{\pi = (\al|\al+1),\\ \al_1 < n-1,\\ |\al+1|=i}} N_{\pi^*} & \text{\normalfont{and }} s= \binom{n}{2}.

\end{array}
\]

\end{prop}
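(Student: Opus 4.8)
The plan is to extract the three formulas from Lepowsky's resolution by means of one weight identity together with the combinatorics of lower-order ideals in $\Phi(\p^+)$. Since $0\in\Lambda^+$, specializing the resolution~\eqref{BGG form} to $\la=0$ gives $N_i=\bigoplus_{w\in\kW_i}N_{w\cdot 0}$ with length $s=|\Phi(\p^+)|$, which equals $pq$ in Type~I and $\binom{n+1}{2}$ in Types~II and III. So everything reduces to computing the $\k$-dominant weight $w\cdot 0$ for $w\in\kW$ and re-indexing the sets $\kW_i$ by the claimed families of partitions.

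The key step is the identity $w\cdot 0=w\rho-\rho=-\sum_{\al\in\Phi_w}\al$ for $w\in\kW$. This is routine: writing $\Phi^+=(\Phi^+\cap w\Phi^+)\sqcup\Phi_w$ and noting $w(\Phi^+\cap w^{-1}\Phi^+)=\Phi^+\setminus\Phi_w$, one gets $w\rho=\tfrac12\bigl(\sum_{\be\in\Phi^+}\be-2\sum_{\be\in\Phi_w}\be\bigr)=\rho-\sum_{\be\in\Phi_w}\be$. Feeding this into the poset isomorphism~\eqref{kW poset}, $w\mapsto\Phi_w$, the task becomes: for each lower-order ideal $I\subseteq\Phi(\p^+)$, evaluate $-\sum_{\al\in I}\al$, match it with the partition attached to $I$, and match $|I|=\ell(w)$ with the appropriate size statistic.

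In Type~I, identify $\Phi(\p^+)$ with the $p\times q$ grid $\{\ep_i-\ep_{p+j}\}$; its lower-order ideals correspond bijectively to $\Par(p\times q)$, with $|I|=|\pi|$, the correspondence being that $I$ has $\pi_i$ boxes in its $i$th row counted from the bottom and $\pi'_j$ boxes in its $j$th column. Then the first $p$ coordinates of $-\sum_{\al\in I}\al$ are the negated row counts, i.e.\ $\pi^*$, and the last $q$ coordinates are the column counts, i.e.\ $\pi'$, so $w\cdot 0=\pi^*\otimes\pi'$; this is the Type~I complex. In Types~II and III, $\Phi(\p^+)=\{\ep_i+\ep_j\}$ is a shifted staircase and its lower-order ideals are shifted Young diagrams. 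Here I would invoke the ``stacking'' construction of Section~\ref{s:Diagrams of Hermitian Type}: stacking such an ideal $I$ against its reflection in the main diagonal produces precisely an ASC partition $\pi=(\al+1\mid\al)$ in Type~II (and the conjugate of one in Type~III), with $|I|=|\al+1|$, and with ``at most $n$ parts'' (equivalently $\al_1<n$) being exactly the condition that $I$ fit in the full poset. A direct coordinate computation — the coefficient of $\ep_k$ in $-\sum_{(i,j)\in I}(\ep_i+\ep_j)$ equals $-(r_k+c_k)$, where $r_k,c_k$ count the boxes of $I$ in row $k$ and column $k$ respectively — then shows $w\cdot 0=\pi^*$ in Type~II and $w\cdot 0=\pi'^*$ in Type~III. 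Since $\rho$ is regular, $w\mapsto w\cdot 0$ is injective; as there are $2^n$ such ideals and $2^n$ ASC partitions with at most $n$ parts, the re-indexing is bijective, and assembling the pieces with Lepowsky's resolution gives the three complexes.

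I expect the main obstacle to be Types~II and III: proving rigorously that $-\sum_{\al\in I}\al$ always has the form $\pi^*$ (resp.\ $\pi'^*$) for an ASC partition $\pi$, and that every such $\pi$ with the prescribed number of parts arises exactly once. The cardinality count forces bijectivity once one knows the image lands among ASC partitions, but it is the precise description of lower-order ideals of the shifted-staircase poset and their translation into ``stacked'' partitions — the content of Section~\ref{s:Diagrams of Hermitian Type} — that does the real work. Type~I, being the grid-poset incarnation of the dual Cauchy identity~\eqref{Dual-Cauchy}, should be straightforward.
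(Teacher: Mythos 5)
Your proposal is correct and follows essentially the same route as the paper: specialize Lepowsky's resolution to $\la=0$, use $w\cdot 0=-\sum_{\be\in\Phi_w}\be$ (the $\la=0$ case of Lemma~\ref{lemma:w-dot-lambda}, where every entry $d_i=1$), and then convert the row/column counts of the lower-order ideal $\Phi_w$ into the stacked partitions of Section~\ref{s:Diagrams of Hermitian Type}, exactly as in~\eqref{stack-Type I},~\eqref{stack-ASC}, and Theorem~\ref{thm:w-dot-la}, with $\ell(w)=|\Phi_w|$ giving the grading. The combinatorial step you flag as the ``real work'' in Types~II--III is precisely what the paper's stacking construction supplies, so no new idea is needed beyond what you invoke.
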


Observing that $|(\al + 1 \mid \al)| = 2|\al+1|$, and using~\eqref{N lambda} and~\eqref{characters as Schur polys} to take the alternating sum of the characters of the $N_i$'s in Proposition~\ref{prop:BGG}, we obtain the following identities:
\[
\def\arraystretch{3}
\begin{array}{ll}

\text{\normalfont{Type I:}} &
\displaystyle 1 = \ch \mathbbm{1} = \ch S(\p^-) \cdot \sum_{\mathclap{\pi \in \Par(p \times q)}} (-1)^{|\pi|} \ch \left(\F{\pi^*}{p}\otimes\F{\pi'}{q}\right) = \frac{\sum_\pi (-1)^{|\pi|} s_\pi(\bfx)s_{\pi'}(\bfy)}{\prod_{i,j} (1-x_i y_j)}.
\\
\text{\normalfont{Type II:}} &
\displaystyle 1 = \ch \mathbbm{1} = \ch S(\p^-) \cdot \sum_{\mathclap{\substack{\pi=(\al+1|\al),\\ \al_1 < n}}} (-1)^{|\pi|/2} \ch \F{\pi^*}{n} = \frac{\sum_\pi (-1)^{|\pi|/2} s_\pi(\bfx)}{\prod_{i \leq j} (1-x_i x_j)}.
\\
\text{\normalfont{Type III:}} &
\displaystyle 1 = \ch \mathbbm{1} = \ch S(\p^-) \cdot \sum_{\mathclap{\substack{\pi=(\al|\al+1),\\ \al_1 < n-1}}} (-1)^{|\pi|/2} \ch \F{\pi^*}{n} = \frac{\sum_\pi (-1)^{|\pi|/2} s_{\pi}(\bfx)}{\prod_{i < j} (1-x_i x_j)}.
\end{array}
\]
In fact, for Type I, rearranging the equation above yields the dual Cauchy identity~\eqref{Dual-Cauchy} upon substituting $x_i \mapsto -x_i$.
For Types II and III, rearrangement yields the Littlewood identities~\eqref{Littlewood-C} and~\eqref{Littlewood-D}, respectively.

\begin{rem}
    The element $h_0 \in \mathfrak{z}(\k)$ induces a grading on highest weight $\g$-modules, to be defined below in~\eqref{Hilbert series definition}.
    One can obtain the \emph{graded} character of a parabolic Verma module by refining its character via the additional indeterminate $t$.
    Upon passing to graded characters in the BGG resolutions in Proposition~\ref{prop:BGG}, we obtain the refined identities below, where the indices $i,j$ coincide with those in~\eqref{Dual-Cauchy}--\eqref{Littlewood-D}, respectively: 
\[
\frac{\displaystyle\sum_{\pi \in \Par(p \times q)} (-1)^{|\pi|}t^{|\pi|} s_\pi(\bfx) s_{\pi'}(\bfy)}{\displaystyle\prod_{i,j} (1 - tx_i y_i)} = 1,
\]
\[
\frac{\displaystyle\sum_{\text{$\pi: \pi$ is ASC}} (-1)^{|\pi|/2} t^{|\pi|/2} s_{\pi}(\bfx)}{\displaystyle\prod_{i \leq j} (1 - t x_i x_j)} = 1, \qquad \qquad
\frac{\displaystyle\sum_{\text{$\pi: \pi'$ is ASC}} (-1)^{|\pi|/2} t^{|\pi|/2} s_{\pi}(\bfx)}{\displaystyle\prod_{i<j} (1 - t x_i x_j)} = 1.
\]
Rearranging and replacing $t$ with $-t$, we obtain the following refinements of~\eqref{Dual-Cauchy}--\eqref{Littlewood-D}:
\begin{align*}
\prod_{\mathclap{\substack{1 \leq i \leq p, \\ 1 \leq j \leq q \phantom{,}}}} (1 + tx_i y_j) & = \sum_{\pi \in \Par(p \times q)} t^{|\pi|} s_{\pi}(x_1, \ldots, x_p) s_{\pi'}(y_1, \ldots, y_q),\\[1ex]
\prod_{\mathclap{1 \leq i\leq j \leq n}} (1 + t x_i x_j) &= \sum_{\text{$\pi: \pi$ is ASC}} t^{|\pi|/2} s_{\pi}(x_1,\ldots,x_n),\\[1ex]
\prod_{\mathclap{1 \leq i<j \leq n}} (1 + t x_i x_j) &= \sum_{\text{$\pi: \pi'$ is ASC}} t^{|\pi|/2} s_{\pi}(x_1,\ldots,x_n).
\end{align*}
\end{rem}

\section{Diagrams of Hermitian type}
\label{s:Diagrams of Hermitian Type}

\subsection{\!\!\!}
In this section, we introduce certain diagrams that encode the highest weights $w \cdot \la \in \Lplusk$ occurring in BGG resolutions.
We will then be able to  write down all of these weights directly from the diagrams, eliminating the need for calculations in terms of the Weyl group.  
This will lead to a short proof of Proposition~\ref{prop:BGG} above.

\subsection{A recursive formula for $w \cdot \la$}

We follow the exposition in~\cite{EHP}*{\S3.7}, to which we refer the reader for details.
Suppose $(\g,\k)$ is a Hermitian symmetric pair.  Then there exists a unique map $f:\Phi(\p^+)\longrightarrow \Pi$, such that 
\begin{equation}
\label{fdef}
\Phi_w = \Phi_v \cupdot \{\be\} \quad (w,v \in \kW) \quad \Longrightarrow \quad w = vs_{f(\be)} = s_{_\be} v.
\end{equation}
In this case, we have the explicit formula 
\begin{equation}
\label{f-beta}
f(\be) = v^{-1}\be.
\end{equation}  
It follows inductively that if we write $\Phi_w = \{\be_1, \ldots,\be_\ell\}$ such that for every $i = 1, \ldots, \ell$ the set $\{\be_1, \ldots, \be_i\}$ is a lower order ideal of $\Phi(\p^+)$, then
\begin{equation}
    \label{w-in-terms-of-v}
    w = s_{f(\be_1)} \cdots s_{f(\be_\ell)} = s_{\be_\ell} \cdots s_{\be_1}.
\end{equation}
In this paper, the motivation behind the map $f$ is the following observation (generalizing a result of Kostant in the case $\la = 0$):
\begin{lemma}
\label{lemma:w-dot-lambda}
    For $\la \in \Lplusk$ and $w \in \kW$, we have 
    \begin{equation}
        \label{w-dot-la-equation}
    w \cdot \la = \la - \sum_{\mathclap{\be \in \Phi_w}} \big\langle\la+\rho,\:f(\be)^\vee\big\rangle \be.
    \end{equation}
\end{lemma}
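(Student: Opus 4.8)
The natural approach is induction on the length $\ell(w) = |\Phi_w|$, exploiting the recursive structure of $\kW$: the poset isomorphism \eqref{kW poset} identifies elements of $\kW$ with lower-order ideals of $\Phi(\p^+)$, and \eqref{fdef} records how a single reflection intervenes when one box is added to such an ideal. The base case $w = e$ is immediate, since $\Phi_e = \emptyset$ and both sides of \eqref{w-dot-la-equation} reduce to $\la$.

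For the inductive step with $\ell(w) \geq 1$, pick any maximal element $\be$ of the lower-order ideal $\Phi_w$; then $\Phi_w \setminus \{\be\}$ is again a lower-order ideal, hence equals $\Phi_v$ for a unique $v \in \kW$ with $\ell(v) = \ell(w) - 1$, and \eqref{fdef} gives $w = s_\be v$. Since the dot action is an action, i.e.\ $(w_1 w_2)\cdot\la = w_1\cdot(w_2\cdot\la)$, we have $w\cdot\la = s_\be\cdot(v\cdot\la)$; and a one-line computation from the definitions yields $s_\be\cdot\mu = \mu - \langle\mu+\rho,\,\be^\vee\rangle\be$ for every $\mu \in \h^*$. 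Applying this with $\mu = v\cdot\la$ and using $(v\cdot\la)+\rho = v(\la+\rho)$ gives
\[
w\cdot\la \;=\; v\cdot\la \;-\; \big\langle v(\la+\rho),\,\be^\vee\big\rangle\,\be.
\]

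It remains to recognize the coefficient as the one claimed. Using the $\W$-invariance of $\langle\,,\,\rangle$ and the identity $\sigma(\al^\vee) = (\sigma\al)^\vee$ for $\sigma\in\W$, followed by \eqref{f-beta} in the form $f(\be) = v^{-1}\be$, we compute
\[
\big\langle v(\la+\rho),\,\be^\vee\big\rangle \;=\; \big\langle \la+\rho,\,v^{-1}(\be^\vee)\big\rangle \;=\; \big\langle \la+\rho,\,(v^{-1}\be)^\vee\big\rangle \;=\; \big\langle\la+\rho,\,f(\be)^\vee\big\rangle.
\]
Feeding in the inductive hypothesis $v\cdot\la = \la - \sum_{\gamma\in\Phi_v}\langle\la+\rho,\,f(\gamma)^\vee\rangle\gamma$ and recalling that $\Phi_w = \Phi_v\cupdot\{\be\}$ closes the induction. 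The argument is essentially formal once the recursive description of $\kW$ is in place; the only point where care is genuinely needed is keeping the \emph{dot} action distinct from the linear action when peeling off $s_\be$ --- in particular, the extra term $\langle\rho,\be^\vee\rangle\be$ that $s_\be\cdot(-)$ contributes but $s_\be(-)$ does not --- and I do not expect any real obstacle beyond that bookkeeping. This is precisely the Hermitian-symmetric refinement that makes Kostant's $\la = 0$ formula telescope so cleanly.
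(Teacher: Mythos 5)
Your proof is correct and follows essentially the same route as the paper: induction on $\ell(w)$ driven by the recursion \eqref{fdef} together with \eqref{f-beta}. The only (cosmetic) difference is that you peel off the reflection on the left, writing $w = s_\be v$ and transferring $\be$ to $f(\be)$ via $\W$-invariance of the pairing, whereas the paper writes $w = v s_{f(\be)}$, applies $s_{f(\be)}$ to $\la+\rho$ first, and uses $v(f(\be)) = \be$; the bookkeeping is dual but the argument is the same.
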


\begin{proof}
    We prove this by induction on $\ell(w)$.  In the base case $w = {\rm id}$, we have $\Phi_w = \varnothing$ and so the sum in~\eqref{w-dot-la-equation} is empty, as desired.  
    Now let $v \in \kW$ and assume that~\eqref{w-dot-la-equation} holds for all elements in $\kW$ with length at most $\ell(v)$.  Let $w \in \kW$ such that $\Phi_w = \Phi_v \cupdot \{\be\}$; then $\ell(w) = \ell(v)+1$.  From~\eqref{fdef} we have $w = vs_{f(\be)}$, and so
    \begin{align*}
    w \cdot \la &= w(\la + \rho) - \rho\\
    &= vs_{f(\be)}(\la+\rho) - \rho\\
    &= v\Big(\la+\rho - \langle\la+\rho,\:f(\be)^\vee\rangle f(\be)\Big)-\rho\\
    &= v(\la+\rho) - \langle\la+\rho,\:f(\be)^\vee\rangle v\big(f(\be)\big) - \rho\\
    &= v \cdot \la - \langle\la+\rho, \: f(\be)^\vee\rangle\be,
    \end{align*}
where in the last equality we have used~\eqref{f-beta} to obtain $v(f(\be)) = vv^{-1}\be = \be$.  By the induction hypothesis, we have $v \cdot \la = \la - \sum_{\gamma \in \Phi_v} \langle\la+\rho,\:f(\gamma)^\vee\rangle\gamma$. Since $\Phi_w = \Phi_v \cupdot \{\be\}$, we see that $w$ satisfies~\eqref{w-dot-la-equation}.
\end{proof}

\subsection{Diagrams of Hermitian type}

The proof of Lemma~\ref{lemma:w-dot-lambda} suggests a recursive method for computing $w \cdot \la$, supposing that we know the reduced expression $w=s_{\be_\ell}\cdots s_{\be_1}$ on the right-hand side of~\eqref{w-in-terms-of-v}.  We can do even better, however: in this section, we interpret Lemma~\ref{lemma:w-dot-lambda} diagrammatically, which will enable us in~\eqref{w-dot as boxes} to write down $w \cdot \la$ directly in terms of $\Phi_w$.

Recall that $\Phi(\p^+)$ inherits the usual poset structure from $\h^*$.
Let $[\Phi(\p^+)]$ denote the diagram obtained by rotating the Hasse diagram of $\Phi(\p^+)$ so that its minimal element (i.e., the unique noncompact simple root) is in the northwest corner, and then replacing each root $\be \in \Phi(\p^+)$ by a shaded box:

\begin{center}
\begin{tikzpicture}[scale=0.65,
every node/.style={scale=0.7},
baseline=(current bounding box.north)]

\foreach[parse=true] \y in {0,...,5} {
    \foreach[parse=true] \x in {0,...,3}
    {\draw [fill=lightgray] (\x,-\y) rectangle ++(1,-1);
    }}

\node[left] at (-1,-.5) {$\ep_{p}$};
\draw [->] (-.75,-.5) -- ++ (.5,0);
\node[left] at (-1,-1.5) {$\ep_{p-1}$};
\draw [->] (-.75,-1.5) -- ++ (.5,0);
\node[left] at (-1,-2.45) {$\vdots$};
\draw [->] (-.75,-2.5) -- ++ (.5,0);
\node[left] at (-1,-3.5) {$\ep_{3}$};
\draw [->] (-.75,-3.5) -- ++ (.5,0);
\node[left] at (-1,-4.5) {$\ep_{2}$};
\draw [->] (-.75,-4.5) -- ++ (.5,0);
\node[left] at (-1,-5.5) {$\ep_{1}$};
\draw [->] (-.75,-5.5) -- ++ (.5,0);

\node at (.5, 1) {$-\ep_{p+1}$};
\draw [->] (.5,.75) -- ++ (0,-.5);
\node at (1.5, 1) {$\cdots$};
\draw [->] (1.5,.75) -- ++ (0,-.5);
\node at (2.5, 1) {$\cdots$};
\draw [->] (2.5,.75) -- ++ (0,-.5);
\node at (3.5, 1) {$-\ep_{p+q}$};
\draw [->] (3.5,.75) -- ++ (0,-.5);

\node[scale=1.33] at (2, -7) {Type I};
    
\end{tikzpicture}
\hfill
\begin{tikzpicture}[scale=0.65,
every node/.style={scale=0.7},
baseline=(current bounding box.north)]

\foreach[parse=true] \y in {0,...,5} {
    \foreach[parse=true] \x in {\y,...,5}
    {\draw [fill=lightgray] (\x,-\y) rectangle ++(1,-1);
    }}

\node[left] at (-1,-.5) {$\ep_n$};
\draw [->] (-.75,-.5) -- ++ (.5,0);
\node[left] at (-1,-1.5) {$\ep_{n-1}$};
\draw [->] (-.75,-1.5) -- ++ (.5,0);
\node[left] at (-1,-2.45) {$\vdots$};
\draw [->] (-.75,-2.5) -- ++ (.5,0);
\node[left] at (-1,-3.5) {$\ep_3$};
\draw [->] (-.75,-3.5) -- ++ (.5,0);
\node[left] at (-1,-4.5) {$\ep_2$};
\draw [->] (-.75,-4.5) -- ++ (.5,0);
\node[left] at (-1,-5.5) {$\ep_1$};
\draw [->] (-.75,-5.5) -- ++ (.5,0);

\node at (.5, 1) {$\ep_{n}$};
\draw [->] (.5,.75) -- ++ (0,-.5);
\node at (1.5, 1) {$\ep_{n-1}$};
\draw [->] (1.5,.75) -- ++ (0,-.5);
\node at (2.5, 1) {$\cdots$};
\draw [->] (2.5,.75) -- ++ (0,-.5);
\node at (3.5, 1) {$\ep_{3}$};
\draw [->] (3.5,.75) -- ++ (0,-.5);
\node at (4.5, 1) {$\ep_{2}$};
\draw [->] (4.5,.75) -- ++ (0,-.5);
\node at (5.5, 1) {$\ep_{1}$};
\draw [->] (5.5,.75) -- ++ (0,-.5);

\node[scale=1.33] at (3, -7) {Type II};
    
\end{tikzpicture}
\hfill
\begin{tikzpicture}[scale=0.65,
every node/.style={scale=0.7},
baseline=(current bounding box.north)]

\foreach[parse=true] \y in {0,...,4} {
    \foreach[parse=true] \x in {\y,...,4}
    {\draw [fill=lightgray] (\x,-\y) rectangle ++(1,-1);
    }}

\node[left] at (-1,-.5) {$\ep_n$};
\draw [->] (-.75,-.5) -- ++ (.5,0);
\node[left] at (-1,-1.5) {$\ep_{n-1}$};
\draw [->] (-.75,-1.5) -- ++ (.5,0);
\node[left] at (-1,-2.45) {$\vdots$};
\draw [->] (-.75,-2.5) -- ++ (.5,0);
\node[left] at (-1,-3.5) {$\ep_3$};
\draw [->] (-.75,-3.5) -- ++ (.5,0);
\node[left] at (-1,-4.5) {$\ep_2$};
\draw [->] (-.75,-4.5) -- ++ (.5,0);

\node at (.5, 1) {$\ep_{n-1}$};
\draw [->] (.5,.75) -- ++ (0,-.5);
\node at (1.5, 1) {$\ep_{n-2}$};
\draw [->] (1.5,.75) -- ++ (0,-.5);
\node at (2.5, 1) {$\cdots$};
\draw [->] (2.5,.75) -- ++ (0,-.5);
\node at (3.5, 1) {$\ep_{2}$};
\draw [->] (3.5,.75) -- ++ (0,-.5);
\node at (4.5, 1) {$\ep_{1}$};
\draw [->] (4.5,.75) -- ++ (0,-.5);

\node[scale=1.33] at (2.5, -7) {Type III};
    
\end{tikzpicture}
\end{center}

\noindent In the figures above depicting $[\Phi(\p^+)]$, each box represents the root given by the sum of the epsilons in its row and column labels.
Note that we have $\beta \leq \beta'$ if and only if the box corresponding to $\beta$ lies weakly northwest of the box corresponding to $\beta'$.

From the general theory of Hermitian symmetric pairs, we have an isomorphism of posets
\begin{align}
\label{kW poset}
\begin{split}
\kW & \cong \{\text{lower order ideals in }\Phi(\p^+)\},\\
    w & \mapsto \Phi_w,
\end{split}
\end{align}
with $\kW$ a poset under the Bruhat order, and the set of lower order ideals in $\Phi(\p^+)$ ordered by inclusion.
In particular,  the poset $\kW$ is a distributive lattice with a unique minimal and maximal element.
In fact, for $v,w\in \kW$, one can show that $v \leq w$ if and only if 
$\Phi_v \subseteq \Phi_w$. 
(This implies that the Bruhat order and the weak Bruhat order coincide when restricted to $\kW$.)
For $w \in \kW$, define
\begin{equation}
    \label{Phi_w brackets}
    [\Phi_w] \coloneqq \text{the subdiagram of $[\Phi(\p^+)]$ corresponding to the roots in $\Phi_w$}.
\end{equation}
It follows from~\eqref{kW poset} that $[\Phi_w]$ is a Young diagram (Type I) or a \emph{shifted} Young diagram (Types II and III) sitting in the northwest corner of $[\Phi(\p^+)]$.
We will view $[\Phi_w]$ both as a Young diagram and as the partition given by its row lengths; in particular, $[\Phi_w]_i$ denotes the number of boxes in the $i$th row. 
We observe from~\eqref{length-size-Delta-w} that \begin{equation}
    \label{length-size-diagram}
    \ell(w) = \big|[\Phi_w]\big|.
\end{equation}

Fix $\la \in \Lambda^+$.
Set the following shorthand for the roots in $\Phi(\p^+)$:
\[
\be_{ij} \coloneqq \begin{cases}
    \ep_i - \ep_{p+j},& (\g,\k) \text{ is of Type I }(1\leq i \leq p, 1 \leq j \leq q),\\
    \ep_i + \ep_j, & (\g, \k) \text{ is of Type II }(n \geq i \geq j \geq 1),\\
    \ep_i + \ep_j, & (\g, \k) \text{ is of Type III }(n \geq i > j \geq 1).
\end{cases}
\]
Correspondingly, we define
\begin{equation}
\label{def:d_ij}
    d_{ij} \coloneqq \big\langle\la+\rho, \: f(\be_{ij})^\vee\big\rangle,
\end{equation}
which allows us to rewrite Lemma~\ref{lemma:w-dot-lambda} as 
\begin{equation} 
\label{w-dot-lambda-rewrite} 
    w \cdot \la = \la - \sum_{\mathclap{\be_{ij} \in \Phi_w}} d_{ij} \be_{ij}.
\end{equation}
Let $[\Phi(\p^+)]_\la$ denote the tableau obtained from $[\Phi(\p^+)]$ by filling the box corresponding to the root $\be_{ij}$ with the entry $d_{ij}$:

\begin{equation}
    \label{dij}
\begin{tikzpicture}[scale=0.8,
every node/.style={scale=0.8},
baseline=(current bounding box.north)]

\foreach[parse=true] \y in {0,...,5} {
    \foreach[parse=true] \x in {0,...,3}
    {\draw[fill=lightgray] (\x,-\y) rectangle ++(1,-1);
    }}
\node at (0.5,-0.5) {$d_{p1}$};
\node at (1.5,-0.5) {$d_{p2}$};
\node at (2.5,-0.5) {$\cdots$};
\node at (3.5,-0.5) {$d_{pq}$};
\node at (0.5,-1.45) {$\vdots$};
\node at (1.5,-1.45) {$\vdots$};
\node at (2.5,-1.5) {$\cdots$};
\node at (3.5,-1.45) {$\vdots$};
\node at (0.5,-2.45) {$\vdots$};
\node at (1.5,-2.45) {$\vdots$};
\node at (2.5,-2.5) {$\cdots$};
\node at (3.5,-2.45) {$\vdots$};
\node at (0.5,-3.5) {$d_{31}$};
\node at (1.5,-3.5) {$d_{32}$};
\node at (2.5,-3.5) {$\cdots$};
\node at (3.5,-3.5) {$d_{3q}$};
\node at (0.5,-4.5) {$d_{21}$};
\node at (1.5,-4.5) {$d_{22}$};
\node at (2.5,-4.5) {$\cdots$};
\node at (3.5,-4.5) {$d_{2q}$};
\node at (0.5,-5.5) {$d_{11}$};
\node at (1.5,-5.5) {$d_{12}$};
\node at (2.5,-5.5) {$\cdots$};
\node at (3.5,-5.5) {$d_{1q}$};

\node[scale=1.25] at (2, -7) {Type I};
    
\end{tikzpicture}
\qquad\qquad
\begin{tikzpicture}[scale=0.8,
every node/.style={scale=0.8},
baseline=(current bounding box.north)]

\foreach[parse=true] \y in {0,...,5} {
    \foreach[parse=true] \x in {\y,...,5}
    {\draw [fill=lightgray] (\x,-\y) rectangle ++(1,-1);
    }}
\node at (0.5,-0.5) {$d_{nn}$};
\node at (1.5,-0.5) {$\cdots$};
\node at (2.5,-0.5) {$\cdots$};
\node at (3.5,-0.5) {$d_{n3}$};
\node at (4.5,-0.5) {$d_{n2}$};
\node at (5.5,-0.5) {$d_{n1}$};
\node at (5.5,-5.5) {$d_{11}$};
\node at (5.5,-4.5) {$d_{21}$};
\node at (5.5,-3.5) {$d_{31}$};
\node at (5.5,-2.45) {$\vdots$};
\node at (5.5,-1.45) {$\vdots$};
\node at (4.5,-4.5) {$d_{22}$};
\node at (4.5,-3.5) {$d_{32}$};
\node at (4.5,-2.45) {$\vdots$};
\node at (4.5,-1.45) {$\vdots$};
\node at (3.5,-3.5) {$d_{33}$};
\node at (3.5,-2.45) {$\vdots$};
\node at (3.5,-1.45) {$\vdots$};
\node at (2.5,-2.5) {$\cdots$};
\node at (2
.5,-1.5) {$\cdots$};
\node at (1
.5,-1.5) {$\cdots$};

\node[scale=1.25] at (3, -7) {Type II};
    
\end{tikzpicture}
\qquad\qquad
\begin{tikzpicture}[scale=0.8,
every node/.style={scale=0.8},
baseline=(current bounding box.north)]

\foreach[parse=true] \y in {0,...,4} {
    \foreach[parse=true] \x in {\y,...,4}
    {\draw[fill=lightgray] (\x,-\y) rectangle ++(1,-1);
    }}
\node at (0.5,-0.5) {$\cdots$};
\node at (1.5,-0.5) {$\cdots$};
\node at (2.5,-0.5) {$d_{n3}$};
\node at (3.5,-0.5) {$d_{n2}$};
\node at (4.5,-0.5) {$d_{n1}$};
\node at (1.5,-1.5) {$\cdots$};
\node at (2.5,-1.45) {$\vdots$};
\node at (3.5,-1.45) {$\vdots$};
\node at (4.5,-1.45) {$\vdots$};
\node at (2.5,-2.5) {$d_{43}$};
\node at (3.5,-2.5) {$d_{42}$};
\node at (4.5,-2.5) {$d_{41}$};
\node at (3.5,-3.5) {$d_{32}$};
\node at (4.5,-3.5) {$d_{31}$};
\node at (4.5,-4.5) {$d_{21}$};

\node[scale=1.25] at (2.5, -7) {Type III};
    
\end{tikzpicture}
\end{equation}

\noindent It will sometimes be convenient to index these inner products $d_{ij}$ in terms of the simple roots $\alpha_i$ rather than in terms of the roots $\be_{ij} \in \Phi(\p^+)$.
Hence we define
\begin{equation}
\label{def:d_i}
    d_i \coloneqq \big\langle\la+\rho,\:\al_i^\vee\big\rangle.
\end{equation}
We record the following dictionary (see~\cite{EHP}*{appendix}) between the $d_{ij}$'s and the $d_i$'s:
\begin{equation}
\label{f}
\renewcommand{\arraystretch}{1.5}
    \begin{array}{ll}
    \text{Type I:} & d_{ij} = d_{i+j-1}.  \\
    \text{Type II:} & d_{ij} = d_{n-i+j}.\\[2ex]
    \text{Type III:} & d_{ij} = 
    \begin{cases}
        d_n,& i-j=1 \text{ with }n-i \text{ even},\\
        d_{n-i+j} & \text{otherwise}.
    \end{cases}
    \end{array}
\end{equation}
Using~\eqref{dij} and~\eqref{f}, we obtain the somewhat simpler description of $[\Phi(\p^+)]_\la$ below:

\begin{equation}
    \label{di}
\begin{tikzpicture}[scale=0.8,
every node/.style={scale=0.8},
baseline=(current bounding box.north)]

\foreach[parse=true] \y in {0,...,5} {
    \foreach[parse=true] \x in {0,...,3}
    {\draw[fill=lightgray] (\x,-\y) rectangle ++(1,-1);
    }}
\node at (0.5,-0.5) {$d_{p}$};
\node at (1.5,-0.5) {$d_{p+1}$};
\node at (2.5,-0.5) {$\cdots$};
\node[scale=.7] at (3.5,-0.5) {$d_{p+q-1}$};
\node at (0.5,-1.5) {$d_{p-1}$};
\node at (0.5,-2.45) {$\vdots$};
\node at (0.5,-3.5) {$d_{3}$};
\node at (0.5,-4.5) {$d_{2}$};
\node at (0.5,-5.5) {$d_{1}$};

\foreach[parse=true] \y in {1,...,5} {
    \foreach[parse=true] \x in {1,...,3}
    {\node at (.5+\x,-.4-\y) {\rotatebox{-10}{$\ddots$}};}
}

\node[scale=1.25] at (2, -7) {Type I};
    
\end{tikzpicture}
\qquad\qquad
\begin{tikzpicture}[scale=0.8,
every node/.style={scale=0.8},
baseline=(current bounding box.north)]

\foreach[parse=true] \y in {0,...,5} {
    \foreach[parse=true] \x in {\y,...,5}
    {\draw [fill=lightgray] (\x,-\y) rectangle ++(1,-1);
    }}
\node at (0.5,-0.5) {$d_{n}$};
\node at (1.5,-0.5) {$d_{n-1}$};
\node at (2.5,-0.5) {$\cdots$};
\node at (3.5,-0.5) {$d_3$};
\node at (4.5,-0.5) {$d_2$};
\node at (5.5,-0.5) {$d_1$};

\foreach[parse=true] \y in {1,...,5} {
    \foreach[parse=true] \x in {\y,...,5}
    {\node at (.5+\x,-.4-\y) {\rotatebox{-10}{$\ddots$}};}
}

\node[scale=1.25] at (3, -7) {Type II};
    
\end{tikzpicture}
\qquad\qquad
\begin{tikzpicture}[scale=0.8,
every node/.style={scale=0.8},
baseline=(current bounding box.north)]

\foreach[parse=true] \y in {0,...,4} {
    \foreach[parse=true] \x in {\y,...,4}
    {\draw [fill=lightgray] (\x,-\y) rectangle  ++(1,-1);
    }}
\node at (0.5,-0.5) {$d_{n}$};
\node at (1.5,-0.5) {$d_{n-2}$};
\node at (2.5,-0.5) {$\cdots$};
\node at (3.5,-0.5) {$d_2$};
\node at (4.5,-0.5) {$d_1$};
\node at (1.5,-1.5) {$d_{n-1}$};
\node at (2.5,-2.5) {$d_n$};
\node at (3.5,-3.5) {$d_{n-1}$};

\foreach[parse=true] \y in {1,...,3} {
    \foreach[parse=true] \x in {\y,...,3}
    {\node at (1.5+\x,-.4-\y) {\rotatebox{-10}{$\ddots$}};}
}
\node at (4.5,-4.4) {\rotatebox{-10}{$\ddots$}};

\node[scale=1.25] at (2.5, -7) {Type III};
    
\end{tikzpicture}
\end{equation}

\noindent 
The diagonal dots in~\eqref{di} indicate constant entries along the diagonals of $[\Phi(\p^+)]_\la$ (with the exception of the main diagonal in Type III, where the entries alternate between $d_n$ and $d_{n-1}$).
Let 
\[
    [\Phi_w]_\la \coloneqq \text{ the subtableau of $[\Phi(\p^+)]_\la$ with underlying diagram $[\Phi_w]$}.
\]
Following~\cite{EHP}, we refer to $[\Phi_w]_\la$ as a \emph{diagram of Hermitian type}.
Note that we can further rewrite~\eqref{w-dot-lambda-rewrite} in terms of this diagram:
\begin{equation}
    \label{w-dot as boxes}
    w \cdot \lambda = \la - \sum_{\mathclap{\substack{\text{boxes}\\ B \text{ in } [\Phi_w]_\la}}} (\text{entry in $B$})(\text{root $\beta \in \Phi(\p^+)$ corresponding to $B$)}.
\end{equation}

\subsection{Stacking diagrams}

We now introduce a ``stacking'' construction that converts $[\Phi_w]_\la$ into a new diagram $\stla{w}$ with twice the size.
First, however, we define this stacking operation on an unfilled diagram $[\Phi_w]$, to obtain a new unfilled diagram $\st{w}$ as follows:
\ytableausetup{centertableaux,smalltableaux}
\begin{equation}
\label{stack table}
\begingroup
\setlength{\arraycolsep}{3ex}
\begin{array}{|c|c|c|c|}

\hline
& \phantom{\bigg(}[\Phi_w]\phantom{\bigg)} & \text{Stacking} & \st{w} \\ \hline

\text{Type I}

&

\ydiagram[*(lightgray)]{5,3,2} 

&

\begin{tikzpicture}[baseline]
    \node at (0,0) {\ydiagram[*(lightgray)]{0,0,0,0,0,0,3+5,3+3,3+2,0}*{0,2+1,2+1,1+2,3,3,3+5,3+3,3+2,0}};
    \draw [dashed] (-1,-1) -- ++ (2,2);
    \draw [->,thick] (0.5,0) -- (0,.5);
\end{tikzpicture}

&

\begin{tikzpicture}[baseline]
    \node at (0,0) {\ydiagram[*(lightgray)]{0,0,0,0,0,0,3+5,3+3,3+2,0}*{0,2+1,2+1,1+2,3,3,3+5,3+3,3+2,0}};
\end{tikzpicture}

\\ \hline

   \text{Type II}

    &
    
    \ydiagram[*(lightgray)]{5,1+3,2+2}
    
    &
    
    \begin{ytableau}
      \none\\
      {} & \none & \none[\raisebox{-1pt}{$\longleftrightarrow$}] & \none & *(lightgray){} & *(lightgray){} & *(lightgray){} & *(lightgray){} & *(lightgray){}\\
      {} & {} & \none & \none[\raisebox{-1pt}{$\longleftrightarrow$}] & \none & *(lightgray){} & *(lightgray){} & *(lightgray){}\\
      {} & {} & {} & \none & \none[\raisebox{-1pt}{$\longleftrightarrow$}] & \none & *(lightgray){} & *(lightgray){} \\
      {} & {} & {}\\
      {}\\
      \none
  \end{ytableau}

  &

  \ydiagram{0,6,5,5,3,1,0}*[*(lightgray)]{0,1+5,2+3,3+2,0}

  \\[1ex] \hline

  \text{Type III}

  &

  \ydiagram[*(lightgray)]{5,1+3,2+2}

  &

  \begin{ytableau}
      \none\\
      *(lightgray) & *(lightgray) & *(lightgray) & *(lightgray) & *(lightgray) \\
      \none[\raisebox{-5pt}{$\updownarrow$}] & *(lightgray) & *(lightgray) & *(lightgray) \\
      \none & \none[\raisebox{-5pt}{$\updownarrow$}] & *(lightgray) & *(lightgray)\\
      & \none & \none[\raisebox{-5pt}{$\updownarrow$}]\\
      & & \none\\
      & & \\
      & & \\
      \none
  \end{ytableau}

  &

  \ydiagram[*(white)]{0,1,2,3,3,1}*[*(lightgray)]{5,4,4,3,3,1}

  \\ \hline

\end{array}
\endgroup
\end{equation}
In particular, in Type I, we combine $[\Phi_w]$ with its reflection across the axis passing through its northwest corner at an angle of 45 degrees; the rows of the reflected diagram are viewed as ``negative'' rows of $\st{w}$, extending toward the left rather than toward the right.
In Type II (resp., Type III), we stack $[\Phi_w]$ horizontally (resp., vertically) with its transpose to obtain a true Young diagram $\st{w}$. 
In explicit coordinates giving row lengths, we have the following: 
\begin{equation*}
\renewcommand{\arraystretch}{1.5}
\begin{array}{lll}
 \text{Type I:} & \st{w}_j = -[\Phi_w]'_{q+1-j} & \text{ for } 1 \leq j \leq q, \\
  & \st{w}_{q+i} = [\Phi_w]_i & \text{ for } 1 \leq i \leq p. \\
 \text{Type II:} & \st{w}_i = [\Phi_w]_i + [\Phi_w]'_i & \text{ for } 1 \leq i \leq n. \\
\text{Type III:} & \st{w}'_i = [\Phi_w]_i + [\Phi_w]'_i & \text{ for } 1 \leq i \leq n-1.
\end{array}
\end{equation*}
For example, in Type I in~\eqref{stack table}, the row lengths of $\st{w}$ from top to bottom are 
\[
(0, \ldots, 0, -1,-1,-2,-3,-3;5,3,2,0, \ldots,0),
\]
where we have written the semicolon after the first $q$ coordinates (i.e., the row lengths of the reflected diagram);
ultimately, in light of Theorem~\ref{thm:w-dot-la}, we will work with the ``dual'' tuple 
\[
(0, \ldots, 0, -2,-3,-5;3,3,2,1,1, 0, \ldots, 0),
\]
where as usual the semicolon is written after the first $p$ coordinates. 
In general for Type I, if $[\Phi_w] = \pi \in \Par(p \times q)$, then we have the following description of $\st{w}$ as a $(q+p)$-tuple of its row lengths:
\begin{equation}
    \label{stack-Type I}
    \text{Type I:} \quad \st{w} = (\pi'^*; \pi).
\end{equation}
In Types II and III, if we modify the Frobenius notation to write $[\Phi_w] = (\al|0)$, where the ``0'' records the fact that all leg lengths in a shifted Young diagram are zero, then we can describe the stacking construction in terms of ASC partitions:
\begin{equation}
    \label{stack-ASC}
    \renewcommand{\arraystretch}{1.5}
    \begin{array}{ll}
        \text{Type II:} & \st{w} = (\al+1 \mid \al). \\
        \text{Type III:} & \st{w} = (\al \mid \al+1).
    \end{array}
\end{equation}

In the same way that we constructed the unfilled diagram $\st{w}$ from $[\Phi_w]$, we can construct a filled diagram $\stla{w}$ from $[\Phi_w]_\la$.
Specifically, the shape of $\stla{w}$ is given by $\st{w}$, while the filling is induced by the filling of $[\Phi_w]_\la$.  In Type I, we fill the reflection of $[\Phi_w]_\la$ with the negatives of the original entries, which we denote by a bar (e.g., $\neg{3} = -3$).  Given a filled diagram $D$, we write $\rows D$ (resp. $\cols D$) to denote the tuple whose $i$th coordinate is the sum of the entries in the $i$th row (resp., column) of $D$, counting from top to bottom (resp., left to right).  We then observe the following:
\begin{equation}
\label{rowcol}
\rows\,\stla{w} =
\begin{cases}
    \left(\cols\,[\Phi_w]^*_\la;\: \rows\,[\Phi_w]_\la\right), & (\g,\k) \text{ is of Type I},\\[1ex]
    \rows\,[\Phi_w]_\la+\cols\,[\Phi_w]_\la, & (\g, \k) \text{ is of Type II},\\[1ex]
    (\rows\,[\Phi_w]_\la,\:0) + (0,\:\cols\,[\Phi_w]_\la), & (\g, \k) \text{ is of Type III,}
\end{cases}
\end{equation}
where $(-, 0)$ and $(0, -)$ denote the $n$-tuples obtained by appending and prepending a zero, respectively.

\begin{ex}[Type I]
    Let $p=3$ and $q=4$.  Let $w\in\kW$ such that $\Phi_w = \{\be_{31}, \be_{32}, \be_{33}, \be_{34}, \be_{21}, \be_{22}, \\ \be_{11}\}$.  (Although we will not need to work with $w$ directly, it is easy enough to see that $w = s_{11}s_{22}s_{21}s_{34}s_{33}s_{32}s_{31}$, where $s_{ij} \coloneqq s_{\be_{ij}}$.)
    By consulting~\eqref{dij}, we see that $[\Phi_w] = (4,2,1)$.
    Now fix $\la = (3,3,3;0,0,0,0)$.  Then $\la+\rho = (9, 8, 7; 3, 2, 1, 0)$.  Hence $d_1 = d_2 = d_4 = d_5 = d_6 = 1$, while $d_3 = 4$.  Therefore, following the filling given in~\eqref{di}, we have
    \[
    \ytableausetup{boxsize=1em}
    [\Phi_w]_\la = \ytableaushort[*(lightgray) \scriptstyle]{4111,14,1} \qquad \leadsto \qquad \stla{w} = \ytableaushort[\scriptstyle]{\none\none {\neg{1}},\none\none {\neg{1}}, \none {\neg{4}} {\neg{1}},{\neg{1}} {\neg{1}} {\neg{4}},\none\none\none{*(lightgray)4}{*(lightgray)1}{*(lightgray)1}{*(lightgray)1},\none\none\none{*(lightgray)1}{*(lightgray)4},\none\none\none{*(lightgray)1}}
    \]
    from which we see that $\rows\,\stla{w} = (-1,-1,-5,-6;7,5,1)$.
\end{ex}

\begin{ex}[Type II]
    Let $n=4$.  Let $w \in \kW$ such that $\Phi_w = \{\be_{44}, \be_{43}, \be_{42}, \be_{41}, \be_{33}\}$.  
    By~\eqref{w-in-terms-of-v} we have $w = s_{33}s_{41}s_{42}s_{43}s_{44}$.
    Then $[\Phi_w] = (\al|0)$ where $\al = (3,0)$.  
    Now fix $\la = (9,5,3,3)$.
    Then $\la+\rho = (13, 8, 5, 4)$.
    Hence $d_1 = 5$, $d_2 =3$, $d_3 = 1$, and $d_4 = 4$.  
    Therefore we have
    \[
    \ytableausetup{smalltableaux}
    [\Phi_w]_\la = \ytableaushort[*(lightgray)]{4135,\none4}
    \qquad \leadsto \qquad \stla{w}= \ytableaushort{4{*(lightgray)4}{*(lightgray)1}{*(lightgray)3}{*(lightgray)5},14{*(lightgray)4},3,5}
    \]
    from which we see that $\rows\,\stla{w} = (17, 9, 3, 5)$.
\end{ex}

\begin{ex}[Type III]
    Let $n=4$.
    Let $w \in \kW$ such that $\Phi_w = \{\be_{43}, \be_{42}, \be_{41}, \be_{32}, \be_{31}\}$.  
    By~\eqref{w-in-terms-of-v} we have $w = s_{31}s_{32}s_{41}s_{42}s_{43}$.
    Then $[\Phi_w] = (\al|0)$ where $\al = (2,1)$.  
    Now fix $\la = \left(\frac{3}{2}, \frac{3}{2}, \frac{3}{2}, -\frac{3}{2}\right)$.  
    Then $\la+\rho = (\frac{9}{2}, \frac{7}{2}, \frac{5}{2}, -\frac{3}{2})$.  
    Hence $d_1 = d_2 = d_4 = 1$, while $d_3 =4$.  Therefore we have
    \[
    \ytableausetup{smalltableaux}
    [\Phi_w]_\la = \ytableaushort[*(lightgray)]{111,\none41}
    \qquad \leadsto \qquad \stla{w}= \ytableaushort{{*(lightgray)1}{*(lightgray)1}{*(lightgray)1},1{*(lightgray)4}{*(lightgray)1},14,11}
    \]
    from which we see that $\rows\,\stla{w} = (3, 6, 5, 2)$.
\end{ex}

\subsection{BGG resolutions via the diagrams}

We arrive at the main result of this section:

\begin{theorem}
\label{thm:w-dot-la}
    Suppose $(\g,\k)$ is of Type I, II, or III.  Let $\la \in \Lplusk$ and $w \in \kW$.  Then we have 
    \[
    w \cdot \la = \la + {\rows\,\stla{w}^*}_{\textstyle .}
    \]
\end{theorem}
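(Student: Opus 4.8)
The plan is to turn the identity into a coordinatewise comparison in the standard $\ep$-coordinates. By Lemma~\ref{lemma:w-dot-lambda}, in the form~\eqref{w-dot-lambda-rewrite}, we have $w\cdot\la = \la - \mu$, where $\mu \coloneqq \sum_{\be_{ij}\in\Phi_w} d_{ij}\be_{ij}$. Since the operation $(\cdot)^*$ both negates and reverses a tuple, the coordinate-reversal of $\rows\,\stla{w}$ equals $-(\rows\,\stla{w})^*$; hence the theorem is equivalent to the statement that $\mu$, written out in $\ep$-coordinates, is exactly the coordinate-reversal of the tuple $\rows\,\stla{w}$. I would verify this one type at a time.

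Take Type II. Writing $\be_{ij} = \ep_i + \ep_j$ and isolating the diagonal roots $\be_{ii} = 2\ep_i$, one finds that the coefficient of $\ep_k$ in $\mu$ is $\sum_{j\le k} d_{kj} + \sum_{i\ge k} d_{ik}$, each sum running over those indices whose root lies in $\Phi_w$. Because of the orientation fixed in Figure~\ref{fig:diagram_dij} (noncompact simple root in the northwest corner), the row of $[\Phi_w]_\la$ labeled $\ep_k$ is the $(n+1-k)$th from the top and the column labeled $\ep_k$ is the $(n+1-k)$th from the left; hence that coefficient equals $\big(\rows\,[\Phi_w]_\la\big)_{n+1-k} + \big(\cols\,[\Phi_w]_\la\big)_{n+1-k}$, which by~\eqref{rowcol} is the $(n+1-k)$th coordinate of $\rows\,\stla{w}$. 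So $\mu$ is indeed the coordinate-reversal of $\rows\,\stla{w}$. Types I and III go the same way. In Type I one separates the first $p$ coordinates of $\mu$, which pick up the row sums $\sum_j d_{ij}$, from the last $q$ coordinates, which pick up the negatives $-\sum_i d_{ij}$ of the column sums (the sign coming from $\be_{ij} = \ep_i - \ep_{p+j}$), and matches against the Type I case of~\eqref{rowcol}. In Type III the inequality $i>j$ is strict, so there are no diagonal roots; consequently the $\ep_1$-coordinate of $\mu$ is a pure column sum and its $\ep_n$-coordinate a pure row sum, which is exactly what the prepended and appended zeros in the Type III case of~\eqref{rowcol} account for.

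The only real content here is the index bookkeeping, and that is where I expect the friction: one must keep straight (i) the reversal built into $(\cdot)^*$; (ii) the reversal between ``the row (or column) labeled $\ep_k$'' and ``the $i$th row (or column) counted from the top (or left),'' which is forced by the placement of the noncompact simple root; (iii) the factor of $2$ contributed by the diagonal roots $\be_{ii} = 2\ep_i$ in Type II and their absence in Type III; and (iv) in Type I, the different treatment of the first $p$ versus the last $q$ coordinates, including the minus sign. Once the conventions are pinned down, each of the three checks is a routine rearrangement, and the three worked examples preceding the theorem can be used to confirm the signs and the ordering of coordinates. One could alternatively induct on $\ell(w) = |[\Phi_w]|$ via~\eqref{fdef}, observing that adjoining a single box $\be_{ij}$ to $[\Phi_w]$ alters the left-hand side by $-d_{ij}\be_{ij}$ and alters $\rows\,\stla{w}$ precisely in the two coordinates that reverse to $\ep_i$ and $\ep_j$; this treats the diagonal roots uniformly but requires the same analysis of how stacking responds to adding a box.
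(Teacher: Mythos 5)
Your proposal is correct and follows essentially the same route as the paper's proof: both start from Lemma~\ref{lemma:w-dot-lambda} in the form~\eqref{w-dot-lambda-rewrite}, identify the coefficient of each $\ep_k$ in $\sum_{\be_{ij}\in\Phi_w} d_{ij}\be_{ij}$ with the appropriate row and column sums of $[\Phi_w]_\la$ (reading the labels off Figure~\ref{fig:diagram_dij}), and then invoke~\eqref{rowcol} type by type, with the reversal and negation absorbed into $(\cdot)^*$. The bookkeeping points you flag (the sign in Type I, the doubled diagonal in Type II, the index shift in Type III) are exactly the details the paper handles, so there is nothing to add.
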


\begin{proof}
The arguments below follow from the filling of $[\Phi(\p^+)]_\la$ illustrated in~\eqref{dij}:

\bigskip

\textbf{Type I:} We have
\[
\Sigma \coloneqq \sum_{\mathclap{\be_{ij}\in \Phi_w}} d_{ij}\be_{ij} = \sum_{\mathclap{\substack{(i,j):\\ \be_{ij}\in \Phi_w}}} d_{ij}(\ep_i - \ep_{p+j}) = \sum_{i=1}^p \overbrace{\left(\sum_j d_{ij}\right)}^{\mathclap{\text{$i$th row sum from the bottom}}}\ep_i - \sum_{j=1}^q \underbrace{\left(\sum_i d_{ij}\right)}_{\mathclap{\text{$j$th column sum}}}\ep_{p+j}
\]
where the row and column sums refer to $[\Phi_w]_\la$.
Therefore the first $p$ coordinates of $\Sigma$ are those of $\rows\,[\Phi_w]_\la$ in reverse order, while the final $q$ coordinates of $\Sigma$ are those of $-\cols\,[\Phi_w]_\la$.  
Hence by~\eqref{rowcol}, the coordinates of $\Sigma$ are the coordinates of $\rows\,\stla{w}$ in reverse order.
The result follows from~\eqref{w-dot-lambda-rewrite}.

\bigskip

\textbf{Type II:} We have
\[
\Sigma \coloneqq  \sum_{\mathclap{\be_{ij} \in \Phi_w}} d_{ij}\be_{ij} = \sum_{\mathclap{\substack{(i,j):\\ \be_{ij}\in \Phi_w}}} d_{ij}(\ep_i + \ep_j) = \sum_{i=1}^n \overbrace{\left(\sum_j d_{ij}\right)}^{\mathclap{\text{$i$th row sum from the bottom}}} \ep_i + \sum_{j=1}^n \underbrace{\left(\sum_i d_{ij}\right)}_{\mathclap{\text{$j$th column sum from the right}}} \ep_j
\]
where the row and column sums refer to $[\Phi_w]_\la$.
Therefore the coordinates of $\Sigma$ are those of $\rows\,[\Phi_w]_\la+\cols\,[\Phi_w]_\la$ in reverse order.
By~\eqref{rowcol}, these are also the coordinates of $\rows\,\stla{w}$ in reverse order.
The result follows from~\eqref{w-dot-lambda-rewrite}.

\textbf{Type III:} We have
\[
\Sigma \coloneqq \sum_{\mathclap{\be_{ij} \in \Phi_w}} d_{ij}\be_{ij} = \sum_{\mathclap{\substack{(i,j):\\ \be_{ij}\in \Phi_w}}} d_{ij}(\ep_i + \ep_j) = \sum_{i=2}^{n} \overbrace{\left(\sum_j d_{ij}\right)}^{\mathclap{\text{$(i-1)$th row sum from the bottom}}} \ep_i + \sum_{j=1}^{n-1} \underbrace{\left(\sum_i d_{ij}\right)}_{\mathclap{\text{$j$th column sum from the right}}} \ep_j
\]
where the row and column sums refer to $[\Phi_w]_\la$.
Therefore the coordinates of $\Sigma$ are those of
\[
(\rows\,[\Phi_w]_\la,\:0) + (0, \: \cols\,[\Phi_w]_\la)
\]
in reverse order.  
By~\eqref{rowcol}, these are also the coordinates of $\rows\,\stla{w}$ in reverse order.
The result follows from~\eqref{w-dot-lambda-rewrite}.
\end{proof}

\begin{proof}[Proof of Proposition~\ref{prop:BGG}]

We must show that the weights $w \cdot 0$ are precisely those appearing in the resolutions in Proposition~\ref{prop:BGG}. 
When $\la = 0$, we have $d_{i}=\langle\rho,\:\al_i^\vee\rangle=1$ for all $i$.
Hence in this case, each box in $[\Phi_w]_\la$ is filled with a ``1,'' and therefore $\rows\,\st{w}_0 = \st{w}$.

In Type I, we have a bijection $\kW \longrightarrow \Par(p\times q)$ given by $w \longmapsto [\Phi_w]$.  
Setting $\pi = [\Phi_w] \in \Par(p \times q)$, we combine Theorem~\ref{thm:w-dot-la} with~\eqref{stack-Type I} to obtain
\[
w \cdot 0 = \st{w}^* = (\pi'^*;\pi)^* = (\pi^*;\pi').
\]
In Type II, we have a bijection between $\kW$ and strictly decreasing partitions $\al$ such that $n > \al_1 > \cdots > \al_r$ with $r \leq n$; the bijection is given by $w \longmapsto [\Phi_w] = (\al|0)$.
Setting $\pi = \st{w}$, we combine Theorem~\ref{thm:w-dot-la} with~\eqref{stack-ASC} to obtain
\[
w \cdot 0 = \st{w}^* = \pi^* =
    (\al + 1 \mid \al)^*.
\]
The argument is identical in Type III.

It remains to show that each individual term $N_i$ in the resolutions of Proposition~\ref{prop:BGG} contains the correct partitions $\pi$, but in all three types this is immediate from~\eqref{length-size-diagram}.
The value of $s = |\Phi(\p^+)|$ is easily calculated from the diagrams in~\eqref{dij}.
\end{proof}

\begin{ex}[Type I]
    Let $p=q=2$, so that $(\g,\k) = (\ssA_3, \ssA_1 \times \ssA_1)$.
    Let $\la = (6,3;3,1)$. 
    Then $\la+\rho = (9,5;4,1)$, and we have $d_1=4$, $d_2 =1$, and $d_3 = 3$.  
    Below we depict the poset $\kW$, where each element $w$ is labeled with the diagram $[\Phi_w]_\la$:

    \[\begin{tikzpicture}

\node (a) at (-2.5,0) {\ytableaushort[*(gray!40)]{13,41}};

\node (b) at (-1.2,0) {\ytableaushort[*(gray!40)]{13,4}};

\node (c) at (0,1) {\ytableaushort[*(gray!40)]{13}};

\node (d) at (0,-1) {\ytableaushort[*(gray!40)]{1,4}};

\node (e) at (1,0) {\ytableaushort[*(gray!40)]{1}};

\node (f) at (2,0) {$\bullet$};

\path [->]  (a) edge (b)
            (b) edge (c)
                edge (d)
            (c) edge (e)
            (d) edge (e)
            (e) edge (f);

\end{tikzpicture}
\]
    
    \noindent Each diagram $[\Phi_w]_\la$ is shorthand for the stacked diagram $\stla{w}$.
    For example, the arrow along the lower-left edge of the diamond represents the following map:
\begin{equation}
\label{map example I}
    \ytableausetup{boxsize=1em}
    \ytableaushort[\scriptstyle]{\none{\neg{3}},{\neg{4}}{\neg{1}},\none\none{*(lightgray)1}{*(lightgray)3},\none\none{*(lightgray)4}} \longrightarrow \ytableaushort[\scriptstyle]{{\neg{4}}{\neg{1}},\none\none{*(lightgray)1},\none\none{*(lightgray)4}}
\end{equation}
    In order to interpret this as a map between parabolic Verma modules, we apply Theorem~\ref{thm:w-dot-la}, which directs us to read off the tuple of row sums for each stacked diagram, and then add its dual to $\la$:
    \begin{alignat*}{3}
        \la + (-3,-5;4,4)^* &=(6,3;3,1) + (-4,-4;5,3) &&= (2,-1;8,4),\\
        \la + (0,-5;1,4)^* &= (6,3;3,1) + (-4, -1;5,0) &&= (2,2;8,1).
    \end{alignat*}
    Therefore the map in~\eqref{map example I} is the map $N_{(2,-1;8,4)} \longrightarrow N_{(2,2;8,1)}$ in the BGG resolution of $L_{\la}$.
\end{ex}

\begin{ex}[Type II]
Let $(\g,\k) = (\ssC_3, \ssA_2)$, and let $\la = (3,1,1)$.
Then $\la+\rho = (6,3,2)$, which gives us $d_1 = 3$, $d_2 = 1,$ and $ d_3 = 2$.
Below is the poset $\kW$, where each element $w$ is labeled with the diagram $[\Phi_w]_\la$:
\[
\ytableausetup{smalltableaux}
\begin{tikzpicture}

\node (d) at (0,1) {\ytableaushort[*(gray!40)]{213}};

\node (a) at (-4.9,0) {\ytableaushort[*(gray!40)]{213,\none 21,\none \none 2}};

\node (b) at (-3.2,0) {\ytableaushort[*(gray!40)]{213,\none 21}};

\node (c) at (-1.5,0) {\ytableaushort[*(gray!40)]{213,\none 2}};

\node (e) at (1.2,0) {\ytableaushort[*(gray!40)]{21}};

\node (f) at (2.4,0) {\ytableaushort[*(gray!40)]{2}};

\node (g) at (3.4,0) {$\bullet$};

\node (h) at (0,-1) {\ytableaushort[*(gray!40)]{21,\none 2}};

\path [->]  (a) edge (b)
            (b) edge (c)
            (c) edge (d)
                edge (h)
            (d) edge (e)
            (h) edge (e)
            (e) edge (f)
            (f) edge (g);

\end{tikzpicture}
\]

\noindent As in the previous example, each diagram $[\Phi_w]_\la$ is shorthand for the diagram $\stla{w}$.
For example, the second arrow from the left represents the following map:
\begin{equation}
\label{map example II}
    \ytableaushort{2{*(lightgray)2}{*(lightgray)1}{*(lightgray)3},12{*(lightgray)2}{*(lightgray)1},31} \longrightarrow \ytableaushort{2{*(lightgray)2}{*(lightgray)1}{*(lightgray)3},12{*(lightgray)2},3}
\end{equation}
Applying Theorem~\ref{thm:w-dot-la}, we compute the row sums and add the dual to $\la$:
\begin{alignat*}{3}
    \la + (8,6,4)^* &= (2,0,0) - (4,6,8) &&= (-2, -6, -8),\\
    \la + (8, 5, 3)^* &= (2,0,0) - (3,5,8) &&= (-1, -5, -8).
\end{alignat*}
Therefore the map in~\eqref{map example II} is the map $N_{(8,6,2)^*} \longrightarrow N_{(8,5,1)^*}$ in the BGG resolution of $L_{\la}$.
\end{ex}

\begin{rem}
Suppose we fix an origin on $\stla{w}$, at the common corner (Type I) or the northwest corner (Types II and III).  
If we adopt the convention (as in Type I) that boxes in the right (resp., left) half-plane contain positive (resp., negative) entries, then taking the dual of $\rows\stla{w}$ is equivalent to rotating $\stla{w}$ by 180 degrees about the origin. \end{rem}

\section{Congruence of blocks and conjugate partitions}
\label{sec:Congruence}

\subsection{\!\!\!}
This section is the heart of the paper.  
Our goal is to classify the occurrences of the phenomena we observed in Example~\ref{ex:D4 and C3}.
In particular, we wish to find (pairs of) pairs $(\g,\k)$ and $(\g', \k')$ of Types I--III, along with $\la \in \Lplusk$ and $\la' \in \Lambda^{\!+\!}(\k')$, such that the highest weights $\{\mu = w\cdot \la \mid w \in \kW\}$ and $\{\mu' = w' \cdot \la' \mid w' \in \prescript{\k'\!}{}{\W'}\}$ form two isomorphic posets; 
moreover, this poset isomorphism should preserve BGG resolutions, and also preserve the dimension of each $\k$-module $F_\mu$.
Finally, this isomorphism should send Young diagrams of (duals of) poset elements to their conjugate diagrams.
Our main result, namely Theorem~\ref{thm:Cong and Conj} and its summary in Table~\ref{table:WC}, consists of six infinite families that enjoy the properties observed in the pair from Example~\ref{ex:D4 and C3}.

A word of warning: in this section, we use the prime symbol to denote the image of a weight under a certain reduction operation.
Hence, \emph{a priori} the symbol $\mu'$ now has nothing to do with the conjugate partition of $\mu$.
Nonetheless, it will turn out that in the special settings of Theorem~\ref{thm:Cong and Conj}, the poset elements $\pi$ and $\pi'$ truly are conjugate partitions, as suggested by the notation.

One major advantage of our methods in this paper is that they eliminate the need for explicit calculations in the Weyl group; 
as a result, until now we have not even needed to write down the actions of the reflections $s_{ij} \coloneqq s_{\be_{ij}}$ on weights of $\g$.
In this section, however, it will be useful to record the following.  In Type I, $s_{ij}$ transposes the $i$th and $(p+j)$th coordinates.  In Types II and III, $s_{ij}$ transposes and negates the $i$th and $j$th coordinates.  In Type II, we also have $s_{ii}$, which negates the $i$th coordinate.

\subsection{Congruent blocks in parabolic category $\O$}

Recall the parabolic Verma module $N_\la$ from~\eqref{Verma}.  We define $L_\la$ to be the unique simple quotient of $N_\la$.  We now introduce a category in which these modules $N_\la$ and $L_\la$ are the basic objects.  Let $(\g,\k)$ be a Hermitian symmetric pair.  We denote by $\O(\g,\k)$ the full subcategory of $U(\g)$-mod whose objects $M$ satisfy the following:
\begin{itemize}
    \item $M$ is a finitely generated $U(\g)$-module;
    \item $M$ is a locally finite $U(\q)$-module;
    \item $M$ is a semisimple $U(\k)$-module.
\end{itemize}
The simple modules in $\O(\g,\k)$ are parametrized by $\Lplusk$, via the correspondence $L_\la \longleftrightarrow \la \in \Lplusk$.  
For $\la \in \Lplusk$, let $\chi_\la$ be the infinitesimal character of the ordinary Verma module $M_\la$, and therefore of its quotients $N_\la$ and $L_\la$.  
We let $\O(\g,\k)_\la$ denote the full subcategory of $\O(\g,\k)$ whose objects are the modules whose composition factors have the infinitesimal character $\chi_\la$.  
Given $\la \in \Lplusk$, the category $\O(\g,\k)_\la$ contains finitely many simple modules, namely the modules $L_\mu$ such that $\mu = w \cdot \la \in \Lplusk$ for some $w \in \W$.

We define a \emph{block} in $\O(\g,\k)$ by separating indecomposable modules which are homologically unrelated via the Ext functor.  
We write $\B_\la$ to denote the block containing $L_\la$.  
The block $\B_\la$ is a subcategory of $\O(\g,\k)_\la$, and furthermore, Enright and Shelton~\cites{ES87, ES89} showed that each $\O(\g,\k)_\la$ decomposes into at most two blocks.  
For a block $\B$, we define the \emph{poset of $\B$} to be
\[
\Lambda(\B) \coloneqq \{ \mu \in \Lplusk \mid L_\mu \text{ is an object in $\B$}\},
\]
via the usual ordering on $\h^*$. 
From now on, we reserve the symbol $\la$ to denote a \textit{quasidominant weight}: that is to say, $\la$ is the unique maximal element of $\Lambda(\B_\la)$.  
We write $\mu$ when referring to an arbitrary element of $\Lambda(\B_\la)$.  
We say that $\la \in \Lplusk$ is \emph{regular} if $\langle \la+\rho,\:\al^\vee\rangle \neq 0$ for all $\al \in \Phi^+$.  
If $\la \in \Lplusk$ is regular, then $\O(\g,\k)_\la = \B_\la$ is itself a block, called a \emph{regular block}; in this case, $\Lambda(\B_\la) \cong \kW$ as posets.

In order to capture three of the four important aspects of the situation in Example~\ref{ex:D4 and C3}, we follow~\cite{Armour}*{Def.~3.4.1} in defining the following notion of congruent blocks:

\begin{dfn}
\label{def:congruence}
    Let $(\g,\k)$ and $(\g',\k')$ be Hermitian symmetric pairs.  Let $\B$ be a block in $\O \coloneqq \O(\g,\k)$ and $\B'$ a block in $\O' \coloneqq \O(\g',\k')$.  We say that $\B$ is \emph{congruent} to $\B'$ if
\begin{enumerate}
    \item we have an isomorphism of posets $\Lambda(\B) \cong \Lambda(\B')$, where we write $\mu \mapsto \mu'$;
    \item for all $\mu,\nu \in \Lambda(\B)$ where $\mu < \nu$, and for all $i \geq 0$, we have $\operatorname{Ext}^i_{\O}(N_\mu, L_\nu) \cong \operatorname{Ext}^i_{\O'}(N_{\mu'},L_{\nu'})$;
    \item for all $\mu \in \Lambda(\B)$, we have $\dim F_\mu = \dim F_{\mu'}$.
\end{enumerate}
\end{dfn}

If $\la$ is the maximal element in the poset $\Lambda(\B)$, then properties (1) and (2) above guarantee that $L_\la$ is a Kostant module in the language of~\cite{Enright-Hunziker-RepTh}.  
Therefore $L_\la$ has a BGG resolution \cite{Enright-Hunziker}*{Thm.~2.8}, even though (as in the singular case described in the following subsection) $L_\la$ may not be a finite-dimensional module.
Revisiting Example~\ref{ex:D4 and C3} in light of Definition~\ref{def:congruence}, we can now say that the regular block $\O(\ssD_4,\ssA_3)_0$ is congruent to the regular block $\O(\ssC_3,\ssA_2)_0$.

\subsection{Enright--Shelton reduction}
\label{sub:ES}

Recall that whenever $\la \in \Lplusk$ is regular, we have $\O(\g,\k)_\la = \B_\la \cong \kW$ as posets.  
If, on the other hand, $\langle \la + \rho, \: \al^\vee\rangle = 0$ for some $\al \in \Phi$, then we say that $\la$ is \emph{singular}, and $\al$ is a \emph{singular root} with respect to $\la$.  
We also say that $\B_\la$ is a \emph{singular block}.  
(For all six families in Table~\ref{table:WC}, note that $\la$ is singular.)  
Loosely speaking, the process of \emph{Enright--Shelton reduction} converts a singular weight (plus $\rho$) in $\Lplusk$ into a regular weight (plus $\rho'$) in $\Lambda^{\!+\!}(\k')$, where $(\g',\k')$ is a certain Hermitian symmetric pair whose rank is less than that of $(\g, \k)$. 
 In the following discussion, we will explain the details of this reduction for the specific instances of $\g$ and $\g'$ listed in Table~\ref{table:WC}.  
 We will write a superscript $\flat$ to denote the result of Enright--Shelton reduction:
\begin{equation*}
    \mu + \rho \xrightarrow{\quad{\rm ES}\quad} (\mu + \rho)^\flat = \mu' + \rho',
\end{equation*}
which induces the map
\begin{equation}
\label{ES reduction}
    \mu \longmapsto \mu' = (\mu+\rho)^\flat - \rho'.
\end{equation}
The reduction is invertible, and we will write a superscript $\sharp$ to denote its inverse:
\[
\mu' + \rho' \xrightarrow{\quad{\rm ES}^{-1}\quad} (\mu'+\rho')^\sharp = \mu + \rho,
\]
which induces the inverse map of~\eqref{ES reduction}, namely
\begin{equation}
\label{ES reduction inverse}
    \mu' \longmapsto \mu = (\mu' + \rho')^\sharp - \rho.
\end{equation}

\begin{theorem}[\cites{ES87,ES89}]
\label{thm:ES}
    Let $(\g, \k)$ be a Hermitian symmetric pair.  
    Let $\la \in \Lplusk$ with $L_\la \neq N_\la$, and let $\la' \in \Lambda^{\!+\!}(\k')$ be the regular weight obtained from $\la$ by Enright--Shelton reduction.  
    Then the map~\eqref{ES reduction} induced by Enright--Shelton reduction restricts to an isomorphism of posets
    \[
        \Lambda(\B_\la) \longrightarrow \Lambda(\B_{\la'})
    \]
    satisfying conditions (1) and (2) of Definition~\ref{def:congruence}.
\end{theorem}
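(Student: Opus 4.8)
The plan is to derive this from the main results of Enright and Shelton~\cites{ES87,ES89}, so that the task reduces to extracting conditions~(1) and~(2) of Definition~\ref{def:congruence} from their work. The essential input I would invoke is that Enright and Shelton construct an equivalence of categories
\[
T \colon \B_\la \xrightarrow{\ \sim\ } \B_{\la'},
\]
realized by a composite of translation functors, relating the (possibly singular) block for $(\g,\k)$ to the regular block for the smaller pair $(\g',\k')$. The hypothesis $L_\la \neq N_\la$ is what makes this available: it rules out the degenerate case in which $\B_\la$ is semisimple with a single simple object (so there is nothing to reduce), and places $\la$ in the ``corner'' singularity configuration handled by their reduction. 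The first thing I would pin down is the behavior of $T$ on objects --- that it is exact, sends simple modules to simple modules and parabolic Verma modules to parabolic Verma modules, and that in both cases the induced bijection of highest weights is exactly the reduction map~\eqref{ES reduction}: $T L_\mu \cong L_{\mu'}$ and $T N_\mu \cong N_{\mu'}$, with $\mu' = (\mu+\rho)^\flat - \rho'$.

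With that in hand, condition~(2) is immediate, since an equivalence of categories induces isomorphisms on all $\operatorname{Ext}$-groups: for $\mu < \nu$ in $\Lambda(\B_\la)$ and all $i \geq 0$,
\[
\operatorname{Ext}^i_{\O}(N_\mu, L_\nu) \;\cong\; \operatorname{Ext}^i_{\O'}(T N_\mu,\, T L_\nu) \;\cong\; \operatorname{Ext}^i_{\O'}(N_{\mu'},\, L_{\nu'}).
\]
For condition~(1), I would use that $T$, carrying parabolic Verma modules to parabolic Verma modules and simples to simples, is an equivalence of highest weight categories; consequently it preserves the natural partial orders on the two weight posets. For blocks of parabolic category $\O$ this natural order is precisely the one inherited from $\h^*$ (when, as here, $\la$ is quasidominant, so that $\la$ is the maximal element and the $\h^*$-order on $\Lambda(\B_\la)$ coincides with the linkage order --- in the regular case, the Bruhat order on $\kW$). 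Hence $\mu \mapsto \mu'$ is an isomorphism of posets $\Lambda(\B_\la) \to \Lambda(\B_{\la'})$, and together with the display above this establishes~(1) and~(2).

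The hard part --- indeed the only substantive part --- is the first step. That $T$ is exact and preserves parabolic Verma modules is a structural feature of translation functors between blocks with compatible wall geometry, which I would quote from~\cites{ES87,ES89}. The genuinely delicate point is identifying the induced bijection of highest weights with the explicit coordinate-deletion recipe $\mu + \rho \mapsto (\mu+\rho)^\flat$: this is a bookkeeping matter --- tracking which coordinates of $\mu+\rho$ get deleted and how the $\rho$- and $\rho'$-shifts interact --- that must be done separately for each pair $(\g,\k)$, $(\g',\k')$, and I would carry it out case by case in the discussion following the theorem, for the families in Table~\ref{table:WC}. One could instead sidestep $T$ altogether, proving~(1) by a direct check that the reduction preserves the weight order and~(2) by comparing the relevant parabolic Kazhdan--Lusztig data on the two sides; but the equivalence-of-categories argument is cleaner, and is the route I would take.
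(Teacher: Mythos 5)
The paper offers no proof of this theorem at all---it is stated as a direct citation of Enright--Shelton \cites{ES87,ES89}---and your proposal likewise reduces everything to quoting their equivalence of categories between the singular block for $(\g,\k)$ and the regular block for the smaller pair $(\g',\k')$, together with the bookkeeping identification of the induced weight bijection with the coordinate-deletion map~\eqref{ES reduction}, so it is essentially the same approach. One small correction: that equivalence is not ``realized by a composite of translation functors'' (translation functors act within category $\O$ for a fixed Lie algebra and cannot pass from $\g$ to a lower-rank $\g'$; Enright and Shelton's construction is a separate, more involved inductive one), but since you invoke the equivalence as a black box this mischaracterization does not affect the logic.
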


\begin{table}[t]
\centering
\resizebox{\linewidth}{!}{
\begin{tblr}{|Q[m,c]|Q[m,c]|Q[m,c]|Q[m,c]||Q[m,c]||Q[m,c]|Q[m,c]|Q[m,c]|}\hline

Type & $\g$ & $\la$ & elements $\pi^* \in \tL(\B_\la)$ & $m$ & $\g'$ & $\la'$ & $\pi'^* \in \tL(\B_{\la'})$ \\ \hline[2pt]

I & $\ssA_{P+Q-1}$ & $-k\omega_P$ & {$(\al\mid \be+m)^*$\\$\otimes$\\$(\be \mid \al + m)$, \\[8pt] $(\al | \be) \in \Par(p \times q)$} & $k$ & $\ssA_{p+q-1}$ & $k \omega'_p$ & {$(\al + m \mid \be)^*$\\$\otimes$\\$(\be + m \mid \al)$, \\[8pt] $(\al | \be) \in \Par(p \times q)$}\\ \hline[2pt]

II & $\ssD_N$ & $-2k\omega_N$ & $(\al \mid \al + m)^*$ & $2k+1$ & $\ssC_n$ & $k\omega'_n$ & $(\al + m \mid \al)^*$ \\ \hline[2pt]

IIIa & $\ssC_N$ & $-\frac{k}{2}\omega_N$ & {$(\al \mid \al + m)^*$  \\ even rank} & $k-1$ & $\ssD_n$ & $k\omega'_n$ & {$(\al + m \mid \al)^*$  \\ even rank} \\ \hline

IIIb & $\ssC_N$ & $-\frac{k}{2}\omega_N + \omega_k^*$ & {$(\al \mid \al + m)^*$ \\ odd rank} & $k-1$ & $\ssD_n$ & $k\omega'_{n-1}$ & {$(\al + m \mid \al)^*$ \\ odd rank} \\ \hline[2pt]

IIIc & $\ssD_N$ & $-(2k-1)\omega_N$ & {$(\al \mid \al + m)^*$ \\ even rank} & $2k$ & $\ssD_n$ & $(2k+1)\omega'_n$ & {$(\al + m \mid \al )^*$ \\ even rank} \\ \hline

IIId & $\ssD_N$ & $-(2k-1)\omega_N + \omega^*_{2k+1}$ & {$(\al \mid \al + m)^*$ \\ odd rank} & $2k$ & $\ssD_n$ & $(2k+1)\omega'_{n-1}$ & {$(\al \mid \al + m)^*$ \\ odd rank} \\ \hline

\end{tblr}
}
\caption{Congruence of singular and regular blocks $\B_\la$ and $\B_{\la'}$.  (See Theorem~\ref{thm:Cong and Conj}.)  
On the right side of the table, we put $p=P-m$, $q = Q-m$, and $n = N-m$.  
To avoid trivial cases, we let $1 \leq k < \text{rank of $(\g,\k)$}$, although Type II remains of interest for $k=0$.  
Note that $\omega^*_k$ is the $N$-tuple whose last $k$ coordinates are $-1$, with $0$'s elsewhere.  We write $\omega'_i$ for the fundamental weights of $\g'$.}
\label{table:WC}
\end{table}

We now detail the process of Enright--Shelton reduction for each of the six families in Table~\ref{table:WC}.  
Note that we label each family with respect to $\g'$ rather than $\g$, reflecting our philosophy that the regular blocks are easier to understand than the singular blocks.  
We use capital letters $P$, $Q$, and $N$ to describe the rank of $\g$; then defining $m$ to be the number of coordinates deleted via reduction (details below), we describe the rank of $\g'$ using the lower-case letters $p\coloneqq P-m$, $q\coloneqq Q-m$, and $n \coloneqq N-m$.  
In each family, the parameter $k$ ranges over all positive integers such that
\[
k < {\rm rank} (\g,\k) = \begin{cases}
    \min\{P,Q\}, & \g = \ssA_{P+Q-1},\\
    N, & \g = \ssC_N,\\
    \lfloor N/2 \rfloor, & \g = \ssD_N.\\
\end{cases}
\]

\textbf{Type I} ($\g = \ssA_{P+Q-1}$, $\g' = \ssA_{p+q-1}$; $\la = -k\omega_P$, $\la' = k\omega'_p$).  
Suppose $\mu+\rho = (a_1, \ldots, a_P; b_1, \ldots, b_Q)$.  
Then $\mu +\rho$ contains a singularity for each instance of an equality $a_i = b_j$, in which case a singular root is $\be_{ij} = \ep_i - \ep_{P+j}$.  
Hence to perform the reduction on $\mu + \rho$, we delete all coordinate pairs $a_i, b_j$ such that $a_i = b_j$.  
Then $m$ is the number of such pairs.

 For example, let $P=4$ and $Q=3$, with $k=2$.  
 Then $\la = (-2,-2,-2,-2;0,0,0)$, and $\la + \rho = (4,3,2,1;2,1,0)$. 
 Therefore by~\eqref{ES reduction}, we have
 \[
    \la' = (\la+\rho)^\flat - \rho' = (4,3,\mathbf{2},\mathbf{1};\mathbf{2},\mathbf{1},0)^\flat - \rho' = (4,3;0) - (2,1;0) = (2,2;0) = 2\omega'_2.
 \]
Note that the reduction deletes $m=2$ pairs of coordinates, so that $p=2$, $q=1$, and indeed $\la' = k\omega'_p$.  
As an example of an arbitrary element of $\Lambda(\B_\la)$, we choose $\mu = (-2,-3,-3,-3;1,1,1)$.  
Then $\mu + \rho = (4,2,1,0;3,2,1)$, and we have
\[
    \mu' = (4,\mathbf{2},\mathbf{1},0;3,\mathbf{2},\mathbf{1})^\flat - \rho' = (4,0;3) - (2,1;0) = (2, -1; 3).
\]

\textbf{Type II} ($\g = \ssD_N$, $\g' = \ssC_n$; $\la = -2k\omega_N$, $\la = k\omega'_n$).  
This case is exceptional among the six families in Table~\ref{table:WC}, because the $\g'$ obtained by Enright--Shelton reduction is actually $\ssD_{n+1}$, not $\ssC_n$.  
In order to obtain the result in Table~\ref{table:WC}, we compose Enright--Shelton reduction with a further reduction from $\ssD_{n+1}$ to $\ssC_n$, by deleting the 0 from $(\mu + \rho)^\flat$.  
This second reduction also satisfies conditions (1) and (2) in Definition~\ref{def:congruence}.  
The reason for our modification here is this: for the other five families, it turns out that Enright--Shelton reduction satisfies condition (3) as well, producing congruent blocks, but in Type II, the extra reduction is necessary to fulfill condition (3).

Suppose $\mu + \rho = (a_1, \ldots, a_N)$, where the $a_i$ are integers. 
Then $\mu+\rho$ contains a singularity for each instance of an equality $a_i = -a_j < 0$, in which case a singular root is $\be_{ij} = \ep_i + \ep_j$.  
To perform the reduction on $\mu + \rho$, we delete all coordinate pairs $a_i, a_j$ such that $a_i = -a_j$.  
Then as mentioned above, we perform a second reduction by deleting the coordinate 0.

For example, let $N=8$, with $k=2$.  
Then $\la = (-2,\ldots,-2)$, and $\la+\rho=(5,4,\ldots,-1,-2)$.  
Therefore, we have
\[
\la+\rho \xrightarrow{\quad{\rm ES}\quad}   (\la+\rho)^\flat = (5,4,3,\mathbf{2},\mathbf{1},0,\mathbf{-1},\mathbf{-2})^\flat =  (5,4,3,0) \xrightarrow{\text{delete $0$}} (5,4,3)
\]
as a weight in Type $\ssC_3$.  Then, treating $(5,4,3)$ as the term $(\la+\rho)^\flat$ in~\eqref{ES reduction}, we have
\[
\la' = (\la+\rho)^\flat - \rho' = (5,4,3)-(3,2,1) = (2,2,2) = 2\omega_3.
\]
Ultimately we have deleted $m=5$ coordinates.  As an example of an arbitrary element of $\Lambda(\B_\la)$, we take $\mu = (-2,-4,-4,-4,-4,-4,-4,-4)$.
Then $\mu + \rho = (5,2,1,0,-1,-2,-3,-4)$, and we have
\[
(5,\mathbf{2}, \mathbf{1},0,\mathbf{-1},\mathbf{-2},-3,-4)^\flat = (5,0,-3,-4) \xrightarrow{\text{delete $0$}} (5,-3,-4),
\]
and so
\[
\mu'= (5,-3,-4) - (3,2,1) = (2,-5,-5).
\]
(We will revisit this setting in Example~\ref{ex:D8 and C3}.)

\textbf{Type IIIa} ($\g = \ssC_N$, $\g' = \ssD_n$; $\la = -\frac{k}{2}\omega_N$, $\la'=k\omega'_n$).  Suppose $\mu + \rho = (a_1, \ldots, a_N)$ with the $a_i$ either all integers or all half-integers.  
Then $\mu + \rho$ contains a singularity for each instance of an equality $a_i = -a_j < 0$, in which case a singular root is $\be_{ij}$; moreover, if $k$ is even, then $\mu + \rho$ contains another singularity at the coordinate $a_i=0$, which means that we have a (long) singular root $\be_{ii} = 2\ep_i$.  
To perform the reduction on $\mu + \rho$, we delete all pairs of opposite coordinates, along with 0 (if applicable).

For example, let $N=7$ and $k=4$.  
Then $\la = (-2,\ldots,-2)$, and $\la + \rho = (5,4,3,2,1,0,-1)$.  
Then we reduce by deleting $m=3$ coordinates, and we have
\[
\la' = (\la+\rho)^\flat-\rho' = (5,4,3,2,\mathbf{1},\mathbf{0},\mathbf{-1})^\flat - \rho' = (5,4,3,2)-(3,2,1,0) = (2,2,2,2) = 4\omega'_4.
\]
On the other hand, if $k=5$, then $\la = \left(-\frac{5}{2}, \ldots, -\frac{5}{2}\right)$ and $\la+\rho = \left(\frac{9}{2}, \frac{7}{2}, \ldots, -\frac{1}{2}, -\frac{3}{2}\right)$.  
We now delete $m=4$ coordinates, and obtain
\[
\la' = \left(\frac{9}{2}, \frac{7}{2}, \frac{5}{2}, \mathbf{\frac{3}{2}}, \mathbf{\frac{1}{2}}, \mathbf{-\frac{1}{2}}, \mathbf{-\frac{3}{2}}\right)^{\!\flat} - \rho' = \left(\frac{9}{2}, \frac{7}{2}, \frac{5}{2}\right) - (2,1,0) = \left(\frac{5}{2}, \frac{5}{2}, \frac{5}{2}\right) = 5 \omega'_3.
\]

\textbf{Type IIIb} ($\g = \ssC_N$, $\g' = \ssD_n$; $\la = -\frac{k}{2}\omega_N + \omega^*_k$, $\la'=k\omega'_{n-1}$).  
The reduction procedure is the same as in Type IIIa. 
As an example, let $N=7$ and $k=4$.  Then $\la = (-2,-2,-2,-3,-3,-3,-3)$, and $\la + \rho = (5,4,3,1,0,-1,-2)$.  We have
\[
\la' = (\la+\rho)^\flat-\rho' = (5,4,3,\mathbf{1},\mathbf{0},\mathbf{-1},-2)^\flat - \rho' = (5,4,3,-2)-(3,2,1,0) = (2,2,2,-2) = 4\omega'_3.
\]
When $k$ is odd, the half-integral case works out similarly.

\textbf{Type IIIc} ($\g = \ssD_N$, $\g' = \ssD_n$; $\la = -(2k-1)\omega_N$, $\la'=(2k+1)\omega'_n$).  
The reduction procedure is the same as in Type II, and there is no need for the extra reduction step since in this case we always obtain half-integer coordinates. 
As an example, let $N=4$ and $k=1$.  
Then $\la = \left(-\frac{1}{2}, \ldots, -\frac{1}{2}\right)$ and $\la + \rho = \left(\frac{5}{2}, \frac{3}{2}, \frac{1}{2}, -\frac{1}{2}\right)$.  
We reduce by deleting $m=2$ coordinates, and we have
\[
\la' = \left(\frac{5}{2}, \frac{3}{2}, \mathbf{\frac{1}{2}}, \mathbf{-\frac{1}{2}}\right)^{\!\flat}-\rho' = 
\left(\frac{5}{2}, \frac{3}{2}\right) - (1,0) = \left(\frac{3}{2}, \frac{3}{2}\right) = 3\omega'_2.
\]

\textbf{Type IIId} ($\g = \ssD_N$, $\g' = \ssD_n$; $\la = -(2k-1)\omega_N + \omega^*_{2k+1}$, $\la'=(2k+1)\omega'_{n-1}$).  
The reduction procedure is the same as in Type IIIc. As an example, let $N=6$ and $k=2$.  
Then $\la = \left(-\frac{3}{2} -\frac{5}{2},-\frac{5}{2},-\frac{5}{2},-\frac{5}{2},-\frac{5}{2}\right)$ and $\la + \rho = \left(\frac{7}{2}, \frac{3}{2}, \frac{1}{2}, -\frac{1}{2}, -\frac{3}{2}, -\frac{5}{2}\right)$.  
We reduce by deleting $m=4$ coordinates, and we have
\[
\la' = \left(\frac{7}{2}, \mathbf{\frac{3}{2}}, \mathbf{\frac{1}{2}}, \mathbf{-\frac{1}{2}}, \mathbf{-\frac{3}{2}}, -\frac{5}{2}\right)^{\!\flat}-\rho' = 
\left(\frac{7}{2}, -\frac{5}{2}\right) - (1,0) = \left(\frac{5}{2}, \frac{5}{2}\right) = 5\omega'_1.
\]

\subsection{Twisted posets of the regular blocks}

In order to see how conjugate partitions are related to congruent blocks, we introduce a ``twist'' to the posets $\Lambda(\B_\la)$ and $\Lambda(\B_{\la'})$.
Let $\zeta$ be the unique fundamental weight of $\g$ that is orthogonal to $\Phi(\k)$; likewise, let $\zeta'$ be the unique fundamental weight of $\g'$ orthogonal to $\Phi(\k')$.  
Let $\be$ be the highest root of $\g$, and $\be'$ the highest root of $\g'$. 
We note that $\langle \zeta,\be^\vee\rangle = \langle \zeta',\be'^\vee\rangle  
= 1$.  

\begin{dfn}
\label{def:Lambda tilde}
    Let $\la$ and $\la'$ belong to one of the families in Table~\ref{table:WC}.  
    We define the \emph{twisted posets}
    \begin{align*}
        \tL(\B_\la) &\coloneqq \{ \mu - \langle \la, \be^\vee\rangle \zeta \mid \mu \in \Lambda(\B_\la)\},\\
        \tL(\B_{\la'}) & \coloneqq \{ \mu' - \langle \la',\be'^\vee\rangle \zeta' \mid \mu' \in \Lambda(\B_{\la'})\}.
    \end{align*}
\end{dfn}

Note that, except in Types IIIb and IIId, the weight $\langle \la,\be^\vee\rangle\zeta$ is the same as $\la$, and $\langle \la',\be'^\vee\rangle\zeta'$ is the same as $\la'$. 
Hence outside Types IIIb and IIId, the twisted posets have the weight 0 as their maximal element.  
For Types IIIb and IIId, $\langle \la,\be^\vee\rangle \zeta$ is just the $\la$ from Types IIIa and IIIc, respectively; the same is true for $\langle \la',\be'^\vee\rangle\zeta'$ and the $\la'$.   
Clearly $\Lambda(\B_\la) \cong \tL(\B_\la)$ and $\Lambda(\B_{\la'}) \cong \tL(\B_{\la'})$ as posets, and any poset map $\Lambda(\B_\la)\longrightarrow \Lambda(\B_{\la'})$ induces a unique map $\tL(\B_\la)\longrightarrow \tL(\B_{\la'})$ between twisted posets.  
In the context of BGG resolutions, the twist amounts to tensoring with the $1$-dimensional $\k$-module $F_{-\langle \la, \be^\vee\rangle \zeta}$ or the $1$-dimensional $\k'$-module $F_{-\langle\la',\be'^\vee\rangle\zeta'}$. 
Therefore, we will write
\begin{align}
\label{def:L tilde}
\begin{split}
\widetilde{L}_\la &\coloneqq L_\la \otimes F_{-\langle \la,\be^\vee\rangle\zeta,}\\
\widetilde{L}_{\la'} &\coloneqq L_{\la'} \otimes F_{-\langle \la',\be'^\vee\rangle\zeta'.}\end{split}
\end{align}

In what follows, we will always write $\Phi_w$ and $\prescript{\k'\!}{}{\W}$ in reference to $(\g', \k')$, rather than $(\g,\k)$.
In Lemma~\ref{lemma:pi'} below, we establish the final column of Table~\ref{table:WC}, which is an explicit description of the elements of $\tL(\B_{\la'})$.  
The flow of our argument in each type is this: we use Enright--Shelton reduction on $\la$ to obtain $\la'$, which determines the filling of diagrams $[\Phi_w]_{\la'}$.  
From this filling we show that $\rows\,\st{w}_{\la'}$ is the result of uniformly lengthening the arms or legs of $\st{w}$, whose shape we already understand from~\eqref{stack-Type I} and~\eqref{stack-ASC}.

\begin{lemma}
    \label{lemma:pi'}
    Let $\la$ and $\la'$ belong to one of the six families in Table~\ref{table:WC}.  Then $\tL(\B_{\la'})$ is the set of all weights $\pi'^*$ shown in the last column of Table~\ref{table:WC}.
\end{lemma}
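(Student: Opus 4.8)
The plan is to treat all six families uniformly, combining Theorem~\ref{thm:w-dot-la} with the explicit weights $\la'$ of Table~\ref{table:WC}, and to reduce the determination of $\tL(\B_{\la'})$ to a short combinatorial inspection of the filled stacked diagrams $\stla{w'}$.

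First I would compute $\la' + \rho'$ in $\ep$-coordinates in each of the six families and read off the values $d'_i = \langle \la' + \rho', (\al'_i)^\vee\rangle$. In every case one checks directly that $\la'$ is regular — so that $\Lambda(\B_{\la'}) = \{ w'\cdot\la' : w' \in \prescript{\k'}{}{\W'}\}$, with $w' \mapsto w'\cdot\la'$ a bijection — and that $d'_i = 1$ for all simple roots except for a single larger value (in type $\ssD$, one larger value among $d'_{n-1}$ and $d'_n$), controlled by $k$. By Figure~\ref{fig:diagram_di}, this means the diagram $[\Phi_{w'}]_{\la'}$ is filled entirely with $1$'s except along the main (content-$0$) diagonal of $[\Phi_{w'}]$, which carries the distinguished value(s); in type $\ssD$ the main diagonal alternates between $d'_{n-1}$ and $d'_n$, one of which is $1$, and whether the larger value sits on the first or second diagonal box (governed by $\la'$) is precisely what separates the even-rank subfamilies IIIa/IIIc from the odd-rank ones IIIb/IIId.

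Next I would push this filling through the stacking construction. Writing $[\Phi_{w'}] = (\al \mid \be) \in \Par(p \times q)$ in Type I and $[\Phi_{w'}] = (\al \mid 0)$ of rank $r$ in Types II and III, the non-unit diagonal entries add a fixed constant to exactly the first $r$ row sums and the first $r$ column sums of $[\Phi_{w'}]_{\la'}$. Feeding this into~\eqref{rowcol} and comparing with the shapes~\eqref{stack-Type I} and~\eqref{stack-ASC} of $\st{w'}$ (the case $\la = 0$, where all entries equal $1$), one finds that $\rows\,\stla{w'}$ arises from $\st{w'}$ by uniformly lengthening arms (or, in the transposed setting, legs) by the constant $m$ of Table~\ref{table:WC}; here $m = k$ in Type I, where the diagonal box contributes to a single row sum and a single column sum lying in disjoint coordinate blocks, whereas $m$ roughly doubles in Types II and III, where under~\eqref{rowcol} the diagonal box of each coordinate is counted by both a row and a column of $[\Phi_{w'}]_{\la'}$, with the type-$\ssD$ alternation accounting for the parity constraint on the rank. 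Concretely, $\rows\,\stla{w'}^*$ is exactly the weight $\pi'^*$ displayed in the last column of the table. Theorem~\ref{thm:w-dot-la} then gives $w'\cdot\la' = \la' + \rows\,\stla{w'}^*$, and the twist of Definition~\ref{def:Lambda tilde} subtracts $\langle\la',\be'^\vee\rangle\zeta'$; since $\langle\la',\be'^\vee\rangle\zeta' = \la'$ outside Types IIIb and IIId (and differs from $\la'$ there by a term which I would compute explicitly), the twisted poset $\tL(\B_{\la'})$ consists precisely of the weights $\rows\,\stla{w'}^*$, up to that correction. As $w'$ ranges over $\prescript{\k'}{}{\W'}$ the diagram $[\Phi_{w'}]$ ranges over all admissible shapes, so these weights are exactly the list in the final column of Table~\ref{table:WC}, as claimed.

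I expect the main obstacle to be bookkeeping rather than anything conceptual: pinning down the exact relation between $m$ and $k$ in each of the six cases (the doubling phenomenon under~\eqref{rowcol}), handling type $\ssD$ carefully so that the alternation of $d'_{n-1}$ and $d'_n$ along the main diagonal produces the correct even/odd-rank restriction and matches the extra shift $\omega^*_k$ appearing in the singular weights $\la$ of Types IIIb and IIId, and keeping straight that in Type II the $\g'$ of Table~\ref{table:WC} is $\ssC_n$ (not the $\ssD_{n+1}$ of unmodified Enright--Shelton reduction), so that one works directly with $\la' = k\omega'_n$ in $\ssC_n$ and invokes the Type II case of~\eqref{stack-ASC}.
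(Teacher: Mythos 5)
Your proposal follows essentially the same route as the paper's proof: compute the $d_i$ from $\la'+\rho'$ (all equal to $1$ except a single value $k+1$ landing on the diagonal), push this filling through the stacking construction and~\eqref{rowcol} to see that $\rows\,\stla{w}$ uniformly lengthens the arms of $\st{w}$ by $m$, use the alternation of $d_{n-1}$ and $d_n$ along the diagonal to get the rank-parity condition in the $\ssD_n$ cases, and handle the extra twist in Types IIIb/IIId as well as the further reduction to $\ssC_n$ in Type II. The details you defer (the $k\ep_1^*$ correction in IIIb/IIId and the row-pair bookkeeping in Type III) are precisely the ones the paper carries out, so the plan is sound and matches the paper's argument.
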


\begin{proof}

\textbf{Type I} ($\la = -k\omega_P$).
We have $\la + \rho=(P+Q-1-k, \ldots, Q-k; \: Q-1, \ldots, 0)$, where the ellipses denote coordinates decreasing by 1.  
Thus the $(P+1-k)$th coordinate is $Q-1$, meaning that the string of $k$ coordinates before the semicolon equals the string of $k$ coordinates after the semicolon.  
Hence in the Enright--Shelton reduction, we delete these $m=k$ pairs of coordinates, so that $\g' = \ssA_{P+Q-2k-1} = \ssA_{p+q-1}$ and $\rho' = (p+q-1, \ldots, 0)$. 
Thus
    \begin{align*}
       (\la+\rho)^\flat = \la' + \rho' &= (p+q+k-1, \ldots, q+k;q-1, \ldots, 0),\\
       \la' &= (k,\ldots,k;0,\ldots,0) = k\omega'_p.
    \end{align*}
    It is clear that $d_i = 1$ for all $i \neq p$, while $d_p=k+1$.  
    Hence every non-diagonal entry in $[\Phi_w]_{\la'}$ is 1, while every diagonal entry is $k+1$. 
    Therefore $\cols\,[\Phi_w]_{\la'}$ is the result of adding $k$ to each leg of $[\Phi_w]$ and then taking the conjugate; likewise, $\rows\,[\Phi_w]_{\la'}$ is the result of adding $k$ to each arm of $[\Phi_w]$.  
    Therefore, setting $[\Phi_w] = (\al|\be)$, and recalling that $m=k$, we use~\eqref{rowcol} to obtain 
    \[
        \rows\,\st{w}_{\la'} = ((\be+m\mid\al)^*;\:(\al+m \mid \be)).
    \]
    By Theorem~\ref{thm:w-dot-la} and Definition~\eqref{def:Lambda tilde}, each element of $\tL(\B_{\la'})$ equals $\rows\,\st{w}_{\la'}^*$ for a unique $w \in \prescript{\k'\!}{}{\W}$.  
    We therefore have 
\[
    \tL(\B_{\la'}) = \Big\{(\al+m\mid\be)^* \otimes (\be+m \mid \al) \:\Big|\: (\al|\be) \in \Par(p \times q)\Big\},
\]
    as desired.
    
    \textbf{Type II} ($\la = -2k\omega_N$).  
    We have $\la+\rho=(N-k-1, \ldots, -k)$, where the ellipsis denotes coordinates decreasing by 1.  
    Thus the final $2k+1$ coordinates are the string $(k, \ldots, -k)$, all of which (except the 0) are deleted via Enright--Shelton reduction.  
    We then have $(\la+\rho)^\flat = (N-k-1, \ldots, k+1, 0)$. 
    After deleting the $0$ (as described in Section~\ref{sub:ES}), we have deleted $m=2k+1$ coordinates, so that $\g' = \ssC_{N-2k-1} = \ssC_n$ and $\rho' = (n,\ldots,1)$.  
    Thus
    \begin{align*}
        \la'+\rho' &= (k+n,\ldots,k+1),\\
        \la' &= (k,\ldots,k) = k\omega'_n.
    \end{align*}
    It is clear that $d_i = 1$ for all $i\neq n$, while $d_n = k+1$. 
    Hence every non-diagonal entry in $[\Phi_w]_{\la'}$ is $1$, while every diagonal entry is $k+1$.  
    Therefore $\rows\,\st{w}_{\la'}$ is the result of adding $2k$ to each arm of $\st{w}$.  
    But if $[\Phi_w] = (\al|0)$, then by~\eqref{stack-ASC} we have $\st{w} = (\al+1\mid \al)$, and so $\rows\,\st{w}_{\la'} = (\al+2k+1 \mid \al)= (\al+m \mid \al)$.  
    By Theorem~\ref{thm:w-dot-la} and Definition~\eqref{def:Lambda tilde}, each element of $\tL(\B_{\la'})$ equals $\rows\,\st{w}_{\la'}^*$ for a unique $w \in \prescript{\k'\!}{}{\W}$, and hence $\tL(\B_{\la'}) = \{(\al+m \mid \al)^* \mid \al_1 < n\}$, as desired. 

    \textbf{Type IIIa} ($\la = -\frac{k}{2}\omega_N$).  
    We have $\la+\rho=(N-\frac{k}{2}, \ldots, 1-\frac{k}{2})$, where the ellipsis denotes coordinates decreasing by 1.  
    Thus the $(k-1)$th coordinate from the end equals $k-1-\frac{k}{2} = \frac{k}{2}-1$, and so the final $k-1$ coordinates are the string $(\frac{k}{2}-1, \ldots, 1-\frac{k}{2})$, which is deleted via Enright--Shelton reduction.  
    Hence $m=k-1$, so that $\g' = \ssD_{N-k+1} = \ssD_n$ and $\rho' = (n-1,\ldots,0)$.  
    Thus
    \begin{align*}
        (\la+\rho)^\flat = \la'+\rho' &= \left(n-1+\frac{k}{2},\ldots,\frac{k}{2} \right),\\
        \la' &= \left(\frac{k}{2},\ldots,\frac{k}{2}\right) = k\omega'_n.
    \end{align*}
    We therefore have $d_i = 1$ for all $i\neq n$, while $d_n = k+1$. 
    Hence all entries in $[\Phi_w]_{\la'}$ are $1$, except for the odd diagonal entries, which are $k+1$.  
    Therefore in $\st{w}_{\la'}$, the entries $k+1$ occur in consecutive vertical pairs: explicitly, in positions $(2i-1,\:2i-1)$ and $(2i,\: 2i-1)$, for $i = 1, \ldots, h \coloneqq \lceil \rk[\Phi_w]/2\rceil$. 
    It follows that $\rows\,\st{w}_{\la'}$ is the result of adding $k$ to the first $h$ row pairs in $\st{w}$, which forces $\rk \rows\,\st{w}_{\la'} = 2h$.  
    If $[\Phi_w] = (\al|0)$, then by~\eqref{stack-ASC} we have $\st{w} = (\al\mid \al+1)$, and so $\rows\,\st{w}_{\la'} = (\al+k \mid \al+1)$, where if $\rk \al$ is odd then we augment $\al$ by inserting $(\ldots, -1 \mid \ldots, 0)$.  
    By allowing $\al_1<n$ rather than $\al_1 < n-1$, we can rewrite partitions of this form as $\rows\,\st{w}_{\la'} = (\al+k-1 \mid \al) = (\al + m \mid \al)$ with even rank.  
    The rest follows as in the previous cases.

    \textbf{Type IIIb} ($\la = -\frac{k}{2}\omega_N+\omega_k^*$).
    We have 
    \[
        \la + \rho = \Big(\underbrace{N - \tfrac{k}{2}, \ldots, \tfrac{k}{2}+1}_{N-k},\underbrace{\tfrac
        {k}{2}-1, \ldots, -\tfrac{k}{2}}_k\Big),
    \]
    where the ellipses denote coordinates decreasing by 1.  
    Thus the final $k$ coordinates, except for the very last one, are all deleted via Enright--Shelton reduction.  
    Hence $m=k-1$, so that $\g' = \ssD_{N-k+1} = \ssD_n$ and $\rho' = (n-1, \ldots, 0)$.  Thus
    \begin{align*}
        (\la + \rho)^\flat = \la' + \rho' &= \left(n-1+\frac{k}{2}, \ldots, \frac{k}{2}+1,-\frac{k}{2}\right),\\
        \la' &= \left(\frac{k}{2},\ldots, \frac{k}{2},-\frac{k}{2}\right) = k\omega'_{n-1}.
    \end{align*}
    We therefore have $d_i = 1$ for all $i \neq n-1$, while $d_{n-1} = k+1$.  
    Hence $[\Phi_w]_{\la'}$ is the same as in Type IIIa, except that $k+1$ occurs as the \emph{even} diagonal entries.  
    As a result, $\rows\,\st{w}_{\la'}$ has \emph{odd} rank, but is not in general a true partition since its first arm is not longer than its second arm: indeed, it is obtained from $\st{w}$ by adding $k$ to the first $\lfloor \rk[\Phi_w]/2 \rfloor$ row pairs \textit{beneath} the first row. 
    Recall, however, that in Type IIIb, we define $\tL(\B_{\la'})$ in terms of the $\la'$ from Type IIIa, namely $k\omega'_n$; since the difference of the two $\la'$s is $k\ep_1^*$, we have
    \[
        \tL(\B_{\la'}) = \left\{(k\ep_1 + \rows\,\st{w}_{\la'})^* \: \middle| \: w \in \prescript{\k'\!}{}{\W}\right\}.
    \]
    This addition of $k\ep_1$ adds back the ``missing'' $k$ to the first arm of $\rows\,\st{w}_{\la'}$, so that elements of $\tL(\B_{\la'})$ are the duals of the partitions $(\al + m \mid \al)$ with odd rank, where $\al_1 < n$.

    \textbf{Type IIIc} ($\la = -(2k-1)\omega_N$).  
    We have $\la+\rho=(N-k-\frac{1}{2},\ldots,-k+\frac{1}{2})$, where the ellipsis denotes coordinates decreasing by 1.  
    Thus the final $2k$ coordinates are the string $(k-\frac{1}{2}, \ldots, -k+\frac{1}{2})$, which is deleted via Enright--Shelton reduction.  
    Hence $m=2k$, so that $\g' = \ssD_{N-2k} = \ssD_n$ and $\rho' = (n-1,\ldots,0)$.  
    Thus
    \begin{align*}
        (\la+\rho)^\flat = \la'+\rho' &= \left(n+k-\frac{1}{2}, \ldots, k+\frac{1}{2}\right),\\
        \la' &= \left(\frac{2k+1}{2},\ldots,\frac{2k+1}{2}\right) = (2k+1)\omega'_n.
    \end{align*}
    The rest of the argument is identical to Type IIIa, where $m=2k$ instead of $m=k-1$. 
    
    \textbf{Type IIId} ($\la = -(2k-1)\omega_N + \omega_{2k+1}^*$).
    We have 
    \[
        \la + \rho = \Big(\underbrace{N - k - \tfrac{1}{2}, \ldots, k+\tfrac{3}{2}}_{N-2k-1},\underbrace{k-\tfrac
        {1}{2}, \ldots, -k-\tfrac{1}{2}}_{2k+1}\Big),
    \]
    where the ellipses denote coordinates decreasing by 1.  
    Thus the final $2k+1$ coordinates, except for the very last one, are all deleted via Enright--Shelton reduction.  
    Hence $m=2k$, so that $\g' = \ssD_{N-2k} = \ssD_n$ and $\rho' = (n-1, \ldots, 0)$.  
    Thus
    \begin{align*}
        (\la + \rho)^\flat = \la' + \rho' &= \left(n+k-\frac{1}{2}, \ldots, k+\frac{3}{2},-k-\frac{1}{2}\right),\\
        \la' &= \left(\frac{2k+1}{2},\ldots, \frac{2k+1}{2},-\frac{2k+1}{2}\right) = (2k+1)\omega'_{n-1}.
    \end{align*}
    The rest of the argument is identical to Type IIIb, where $m=2k$ instead of $m=k-1$.
\end{proof}

\subsection{Main result: congruent blocks and conjugate partitions}
\label{sub:proofs}

We arrive at our main result:

\begin{theorem}
\label{thm:Cong and Conj}
    Let $\la$ and $\la'$ belong to one of the six families in Table~\ref{table:WC}.  
    Then $\B_\la$ is congruent to $\B_{\la'}$.  Moreover, we have an isomorphism of twisted posets
    \begin{align*}
        \tL(\B_\la) &\longrightarrow \tL(\B_{\la'}),\\
        \pi^* &\longmapsto \pi'^*
\end{align*}
such that $\pi$ and $\pi'$  are conjugate partitions.
\end{theorem}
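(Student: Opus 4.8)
The plan is to assemble the three conditions of Definition~\ref{def:congruence} from ingredients already in place: Enright--Shelton reduction (Theorem~\ref{thm:ES}) for conditions (1) and (2), the dimension identities of Section~\ref{sec:Dimension IDs} for condition (3), and a backwards run of the reduction for the explicit shape of $\tL(\B_\la)$ and hence the conjugacy statement. Conditions (1) and (2) are immediate: by Theorem~\ref{thm:ES} the map~\eqref{ES reduction} restricts to a poset isomorphism $\Lambda(\B_\la)\to\Lambda(\B_{\la'})$ satisfying (1) and (2), and in family~II one composes this with the supplementary reduction $\ssD_{n+1}\to\ssC_n$ deleting the coordinate $0$, which (as noted in Section~\ref{sub:ES}) also preserves (1) and (2). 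Tensoring by the one-dimensional modules $F_{-\langle\la,\be^\vee\rangle\zeta}$ and $F_{-\langle\la',\be'^\vee\rangle\zeta'}$ carries this verbatim to an isomorphism of twisted posets $\tL(\B_\la)\to\tL(\B_{\la'})$, so the substance of the theorem is the conjugacy statement together with condition~(3).

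To obtain the conjugacy (and, along the way, the fourth column of Table~\ref{table:WC}), I would pull each element of $\tL(\B_{\la'})$ back through~\eqref{ES reduction inverse}. Lemma~\ref{lemma:pi'} gives $\tL(\B_{\la'})$ as the set of $\pi'^*$ in the last column, each lifting to $\mu'=\pi'^*+\langle\la',\be'^\vee\rangle\zeta'\in\Lambda(\B_{\la'})$. Applying the explicit inverse-reduction recipe of Section~\ref{sub:ES} to $\mu'+\rho'$ — reinserting the deleted coordinates (a strip of consecutive values symmetric about $0$ in families~II--IIId, a matched pair of blocks on either side of the semicolon in family~I), placed where strict decrease forces them — and then passing from $\rho'$ back to $\rho$ and subtracting $\langle\la,\be^\vee\rangle\zeta$, one reads off a weight $\pi^*$. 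A direct inspection of Frobenius coordinates shows that the net effect of swapping $\rho'$ for $\rho$ and re-twisting is to shorten the arms of $\pi'$ and lengthen its legs by the complementary amounts, so that $\pi=(\pi')'$: the shapes $(\al\mid\be+m)^*\otimes(\be\mid\al+m)$ and $(\al\mid\al+m)^*$ of column~4 are exactly the transposes of $(\al+m\mid\be)^*\otimes(\be+m\mid\al)$ and $(\al+m\mid\al)^*$ of column~8 (in family~I the transpose also exchanges the two Levi factors, as befits $\mathfrak{s}(\gl_p\oplus\gl_q)$). This simultaneously identifies $\tL(\B_\la)$ with the set in column~4 and shows that the isomorphism of the previous step is $\pi^*\mapsto\pi'^*$ with $\pi$ and $\pi'$ conjugate. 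Alternatively one could compute $\tL(\B_\la)$ directly as in Lemma~\ref{lemma:pi'}: in every family $\la$ is, up to the correction $\omega_k^*$ or $\omega_{2k+1}^*$ recorded in Table~\ref{table:WC}, a scalar multiple of the cominuscule fundamental weight $\zeta$, so all off-diagonal entries of $[\Phi_w]_\la$ equal $1$, the diagonal entries are constant, and Theorem~\ref{thm:w-dot-la} together with~\eqref{stack-ASC},~\eqref{stack-Type I},~\eqref{rowcol} produces the leg-lengthened conjugate shapes — but this route also requires cutting down to the sub-poset of $\kW$ that survives in the singular block, which the backwards-reduction argument sidesteps.

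Condition~(3) then follows from the dimension identities of Section~\ref{sec:Dimension IDs}. Since $\dim\F{\sigma^*}{r}=\dim\F{\sigma}{r}$, the equality $\dim F_{\pi^*}=\dim F_{\pi'^*}$ reads, in family~I (with $P=p+m$, $Q=q+m$),
\[
\dim\F{(\al\mid\be+m)}{P}\,\dim\F{(\be\mid\al+m)}{Q}=\dim\F{(\al+m\mid\be)}{p}\,\dim\F{(\be+m\mid\al)}{q},
\]
which is precisely~\eqref{ID-dim-GLn} for $(\al\mid\be)\in\Par(p\times q)$; and in each of families~II and IIIa--IIId (with $N=n+m$) it reads $\dim\F{(\al\mid\al+m)}{N}=\dim\F{(\al+m\mid\al)}{n}$, which is the content of Theorem~\ref{thm:dim GLn pairs}. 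Hence $\B_\la$ is congruent to $\B_{\la'}$, and the poset isomorphism is by conjugate partitions.

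The hard part is the middle step: the backwards run of Enright--Shelton reduction has to be carried out separately for the six families, and in particular one must track the parity split between IIIa and IIIb (and between IIIc and IIId), where the extra summand $\omega_k^*$ or $\omega_{2k+1}^*$ in $\la$ shifts the lengthened diagonal from odd to even positions and flips the rank parity of $\pi$. This is mechanical given the coordinate recipes of Section~\ref{sub:ES}, and uses no idea beyond those already present in Lemma~\ref{lemma:pi'}.
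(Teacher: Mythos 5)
Your proposal is correct and follows essentially the same route as the paper: conditions (1)--(2) come from Theorem~\ref{thm:ES}, condition (3) from Theorems~\ref{theorem:ID-dim-GLn} and~\ref{thm:dim GLn pairs}, and the conjugacy is obtained by starting from the explicit description of $\tL(\B_{\la'})$ in Lemma~\ref{lemma:pi'} and running the Enright--Shelton reduction backwards (tracking the reinserted deleted string), exactly the paper's strategy. The only difference is organizational: where you assert a one-shot ``direct inspection of Frobenius coordinates,'' the paper makes that verification rigorous inductively, box by box on $[\Phi_w]$, via the explicit coordinate formulas~\eqref{claim in main proof} and~\eqref{claim Type I} for $w(\la'+\rho')$ and the base-case check in Types IIIb/IIId --- the same bookkeeping you call mechanical, just carried out.
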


\begin{rem}
\label{rem:butterfly}
In Type I, the weight $\pi' = \rows\,\st{w}_{\la'}$ is not a true partition, but rather a pair of partitions, where the dual operation $( \: )^*$ has been applied to the first partition. 
 Hence $\pi'$ is represented by a corner-to-corner stacking of the two Young diagrams, where the first is rotated 180 degrees so that its row lengths are considered negative.  
 (Recall the stacked diagrams $\st{w}$ in Type I, which took the same form.)  
 Seeing as how such stacked diagrams resemble a butterfly, we will refer to each of the two diagrams as a ``wing'' of $\pi'$. 
 Likewise, $\pi$ is represented by a butterfly diagram.  
 The claim in the theorem is that $\pi'$ and $\pi$ are conjugates, i.e., we can obtain one from the other by reflecting about the 45-degree axis through the center of the butterfly.
\end{rem}

From now on, thanks to Lemma~\ref{lemma:pi'}, we will write $\pi'^*$ for an arbitrary element of $\tL(\B_{\la'})$, where $\pi'$ is a partition (except in Type I, where $\pi$ contains $q$ negative coordinates followed by $p$ positive coordinates). 
By Theorem~\ref{thm:ES}, there is a poset isomorphism $\Lambda(\B_\la) \longrightarrow \Lambda(\B_{\la'})$ induced by Enright--Shelton reduction, which further induces a poset isomorphism $\tL(\B_\la) \longrightarrow \tL(\B_{\la'})$.
We denote the preimage of $\pi'^*$ by writing $\pi^* \in \tL(\B_{\la})$, without making any assumptions about the nature of $\pi = (\pi^*)^*$ itself.  
Explicitly, we must have
\[
\pi = [(\pi'^*+\la'+\rho')^\sharp - (\la+\rho)]^*.
\]
(As before, in Types IIIb and IIId we must use the $\la$ and $\la'$ from Types IIIa and IIIc, respectively.)  
Since $\pi'^* = w\cdot\la'-\la'$ for some $w \in \prescript{\k'\!}{}{\W}$, the claim in Theorem~\ref{thm:Cong and Conj} is that the two weights
\begin{align}
\label{pi formula}
\begin{split}
    \pi'&=[w(\la'+\rho')-(\la'+\rho')]^*,\\
    \pi &= [w(\la'+\rho')^\sharp - (\la+\rho)]^*
    \end{split}
\end{align}
are truly conjugate partitions for all $w \in \prescript{\k'\!}{}{\W}$.  
Before proving Theorem~\ref{thm:Cong and Conj}, we present a detailed example that illuminates the way in which these conjugate partitions arise.  
Our approach is to begin with $\Lambda(\B_{\la'})$, which we understand completely thanks to Lemma~\ref{lemma:w-dot-lambda}, and then reverse the Enright--Shelton reduction to pass to $\Lambda(\B_{\la})$.  
This philosophy --- taking the regular block as our starting point in order to understand the singular block --- is the reason for our labeling the various types in terms of $\g'$ rather than $\g$.

\begin{figure}[t]
    \centering
    \[\begin{tikzpicture}

  \node (a) at  (-3,4.5) {$(5,4,3,\bbox)$};

  \node (b) at  (-3,3) {$(5,4,\bbox,\overline{3})$};

  \node (c) at  (-3,1.5) {$(5,3,\bbox,\overline{4})$};

  \node (d) at (-4.5,0) {$(4,3,\bbox,\overline{5})$};

  \node (e) at (-1.5,0) {$(5,\bbox,\overline{3},\overline{4})$};

  \node (f) at (-3,-1.5) {$(4,\bbox,\overline{3},\overline{5})$};

  \node (g) at  (-3,-3) {$(3, \bbox, \overline{4}, \overline{5})$};

  \node (h) at  (-3,-4.5) {$(\bbox,\overline{3}, \overline{4}, \overline{5})$};

  \node [draw] (i) at (-3,-6) {\begin{tabular}{ll}
     Without $\bbox$:  & $w(\la'+\rho')$ \\
     With $\bbox$:  & $w(\la'+\rho')^\sharp$
  \end{tabular}};

  \draw [<-] (a) edge (b) (b) edge (c) (c) edge (d) (d) edge (f) (f) edge (g) (g) edge (h) (c) edge (e) (e) edge (f);

  \node (a) at (3,4.5) {$\bullet$};

  \node (b) at (3,3) {\ytableaushort[*(gray!30)]{3}};

  \node (c) at (3,1.5) {\ytableaushort[*(gray!30)]{31}};

  \node (d) at (1.5,0) {\ytableaushort[*(gray!30)]{311}};

  \node (e) at (4.5,0) {\ytableaushort[*(gray!30)]{31,\none 3}};

  \node (f) at (3,-1.5) {\ytableaushort[*(gray!30)]{311,\none 3}};

  \node (g) at (3,-3) {\ytableaushort[*(gray!30)]{311,\none 31}};

  \node (h) at (3,-4.5) {\ytableaushort[*(gray!30)]{311,\none 31,\none \none 3}};

  \node (i) at (3,-6) {$[\Phi_w]_{\la'}$};

  \draw [<-] (a) edge (b) (b) edge (c) (c) edge (d) (d) edge (f) (f) edge (g) (g) edge (h) (c) edge (e) (e) edge (f);

\end{tikzpicture}\]
    \caption{Illustration of Example~\ref{ex:D8 and C3}.  
    The symbol $\bbox$ denotes the string $(2,1,0,\neg{1}, \neg{2})$ deleted via Enright--Shelton reduction (and subsequent deletion of 0). 
    For typographical clarity, the bars denote negatives.}
    \label{fig:example conjugates Type II}
\end{figure}
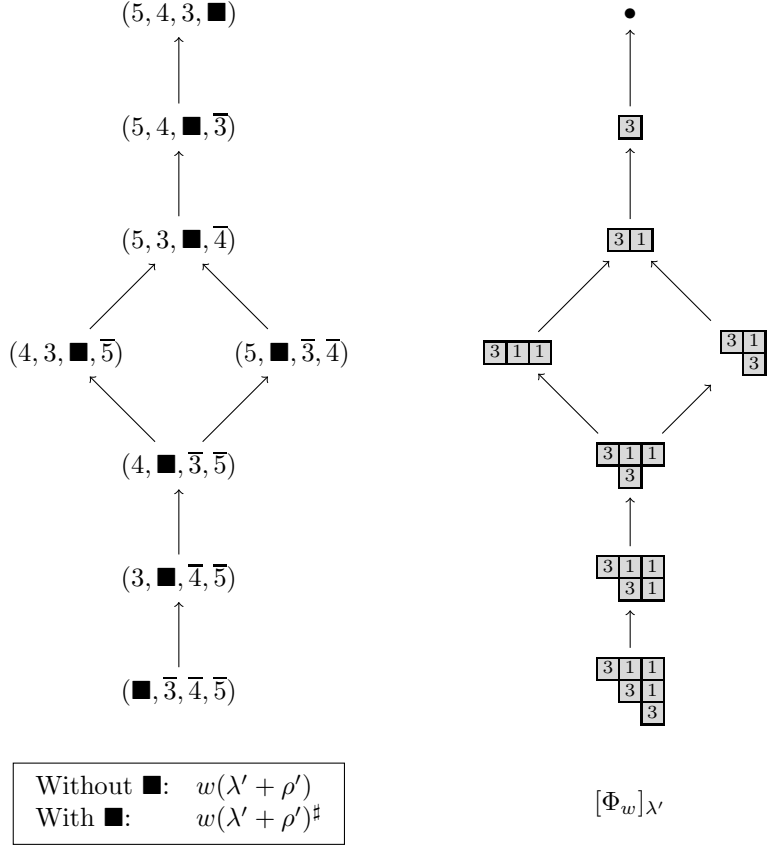

\begin{ex}
\label{ex:D8 and C3}
    We revisit our previous example in Type II, where $N=8$ and $k=2$.  
    (See the case-by-case descriptions at the end of Section~\ref{sub:ES}.)
    For typographical clarity, we write negatives as bars over the coordinates; hence we have $\g = \ssD_8$ with $\la = (\neg{2}, \ldots, \neg{2})$, and $\g' = \ssC_3$ with $\la' = (2,2,2)$.  Thus $\la'+\rho' = (5,4,3)$.
    
We refer the reader to Figure~\ref{fig:example conjugates Type II} throughout this example. 
On the right, we represent $w \in \prescript{\k'\!}{}{\W}$ by the diagram $[\Phi_w]_{\la'}$.  
On the left, we simultaneously depict \emph{both} posets $\tL(\B_{\la'})$ and $\tL(\B_{\la})$.  
The symbol $\bbox \coloneqq (2,1,0,\neg{1}, \neg{2})$ denotes the string of $m=5$ coordinates deleted via Enright--Shelton reduction (and subsequent deletion of the 0).  
In this way, ignoring the $\bbox$ gives us $w(\la'+\rho')$, while retaining the $\bbox$ gives us $w(\la'+\rho')^\sharp$.  (We abuse notation slightly by writing $\sharp$ to reverse both the 0-deletion and the Enright--Shelton reduction.) 
Upon subtracting either $\la'+\rho'$ or $\la+\rho$, we have the elements of either $\tL(\B_{\la'})$ or $\tL(\B_{\la})$.

Our goal is to understand why (the duals of) corresponding elements $\pi'^* \in \tL(\B_{\la'})$ and $\pi^* \in \tL(\B_{\la})$ are conjugate partitions.  
In order to compare inductively the construction of $\pi'$ and $\pi$, it suffices to consider the general case where $w$ covers $v$, i.e., where $[\Phi_v]_{\la'}$ is joined from above to $[\Phi_w]_{\la'}$ in Figure~\ref{fig:example conjugates Type II}.  
In other words, we consider the effect of adding one box to $[\Phi_v]_{\la'}$ to obtain $[\Phi_w]_{\la'}$.  
In the base case, at the top of Figure~\ref{fig:example conjugates Type II} where $w = {\rm id}$, we have from~\eqref{pi formula} that $\pi' = \pi = 0$.  
Therefore, again by~\eqref{pi formula}, it actually suffices to compare the differences
\begin{equation}
\label{differences}
    w(\la'+\rho') - v(\la'+\rho') \qquad \text{and} \qquad w(\la'+\rho')^\sharp -   v(\la'+\rho')^\sharp,
\end{equation}
which (upon taking the dual) will tell us how $\pi'$ and $\pi$ are constructed as $[\Phi_w]_{\la'}$ is built box by box.
We adopt the convention of building $[\Phi_w]_{\la'}$ by rows from top to bottom, adding boxes in a given row from left to right.  
Hence for our purposes, there are at most two ways to add a box to $[\Phi_v]_{\la'}$: either add a new box along the diagonal, or add a box to the bottom row.

\bigskip

\textbf{Case 1: adding a diagonal box.}
Suppose that $[\Phi_w]_{\la'}$ is obtained by adding the $i$th diagonal box to $[\Phi_v]_{\la'}$.
This box corresponds to the root $2\ep_{n+1-i}$, and by the proof of Lemma~\ref{lemma:pi'} the entry in this box is $k+1 = 3$.
Thus by~\eqref{w-dot as boxes}, $w(\la'+\rho')$ is obtained from $v(\la' + \rho')$ by subtracting $3 \cdot 2\ep_{n+1-i}$, that is, subtracting $6$ from the $i$th coordinate from the right.
We claim (but we save for the actual proof) that this coordinate in $v(\la' + \rho')$ is necessarily $k+1 = 3$, which therefore is negated by our subtraction.
In Figure~\ref{fig:example conjugates Type II}, we thus have
    \begin{equation}
    \label{add diag box}
        v(\la' + \rho') = ( \ldots, \posarrow{3}{\substack{\text{$i$th from the right,}\\\text{ignoring the $\bbox$}}}, \bbox, \ldots) \quad \leadsto \quad  w(\la' + \rho') = \quad (\ldots, \bbox, \neg{3}, \ldots).
    \end{equation}
    Therefore, on one hand, ignoring the $\bbox$ in~\eqref{add diag box} gives us
    \[
        w(\la'+\rho') - v(\la' + \rho') = (0, \ldots, 0,\posarrow{\neg{6}}{\text{$i$th from the right}},0, \ldots, 0),
    \]
    which (upon taking the dual) creates the $i$th arm in $\pi'$, with length $m=6-1=5$. 
    On the other hand, including the $\bbox$ in~\eqref{add diag box} gives us
    \[
        w(\la'+\rho')^\sharp - v(\la' + \rho')^\sharp = (0, \ldots,0,\underbrace{\neg{1},\ldots,\neg{1}}_{\mathclap{\substack{\text{$m+1$ coordinates},\\
        \text{ending $i$th from the right}}}},0,\ldots,0),
    \]
    which (upon taking the dual) creates the $i$th leg in $\pi$, with length $m=5$.  
    (This fact requires that each previous leg must be strictly longer than $m$, which will be clear from Case 2 below.)  
    Hence the case of adding a diagonal box preserves the conjugate shapes of $\pi'$ and $\pi$.

    As a specific example of Case 1, in Figure~\ref{fig:example conjugates Type II} we choose $[\Phi_v]_{\la'} = \ytableaushort[*(lightgray)]{31}$, and add a diagonal box to begin row $i=2$, thus obtaining $[\Phi_w]_{\la'}$ to the southeast.  
    On the left side, we thus have
    \[
        (5,3,\bbox,\neg{4}) \quad \leadsto \quad (5,\bbox,\neg{3},\neg{4}).
    \]
    We confirm that this has the effect of negating the $3$, which was indeed in position $i=2$ from the right.  
    Moreover, we have the differences $(5,\neg{3},\neg{4}) - (5,3,\neg{4}) = (0,\neg{6},0)$ and $(5,\bbox,\neg{3},\neg{4}) - (5,3,\bbox,\neg{4}) = (0,\neg{1},\neg{1},\neg{1},\neg{1},\neg{1},\neg{1},0)$.

\bigskip

    \textbf{Case 2: adding a non-diagonal box.}  Suppose that $[\Phi_w]_{\la'}$ is obtained by adding the $j$th non-diagonal box to the $i$th row of $[\Phi_v]_{\la'}$, which we assume is the bottom row.  
    This box corresponds to the root $\ep_{n+1-i} + \ep_{n+1-i-j}$, and by the proof of Lemma~\ref{lemma:pi'} the entry in this box is 1.  
    Thus by~\eqref{w-dot as boxes}, $w(\la'+\rho')$ is obtained from $v(\la'+\rho')$ by subtracting $1 \cdot (\ep_{n+1-i} + \ep_{n+1-i-j})$, that is, subtracting 1 from the $i$th and $(i+j)$th coordinates from the right.  
    We claim (but we reserve for the proof)  that these coordinates are necessarily $\neg{j+k}$ and $j+k+1$, which are therefore transposed and negated by our subtraction.  
    In our current example where $k=2$, these two coordinates are $\neg{j+2}$ and $j+3$. 
    In Figure~\ref{fig:example conjugates Type II}, we thus have
    \begin{equation}
    \label{add to bottom row}
        v(\la'+\rho') = (\ldots, \posarrow{j+3}{\substack{\text{$(i+j)$th} \\ \text{from the right,}\\ \text{ignoring the $\bbox$}}}, \ldots\ldots, \bbox, \posarrow{\neg{j+2}}{\substack{\text{$i$th} \\ \text{from the right}}}, \ldots) \quad \leadsto \quad  w(\la'+\rho') = (\ldots, j+2, \ldots, \bbox, \neg{j+3}, \ldots).
    \end{equation}
    Therefore, on one hand, ignoring the $\bbox$ in~\eqref{add to bottom row} gives us
    \[
        w(\la' + \rho') - v(\la'+\rho') = (0, \ldots, 0,\posarrow{\neg{1}}{\substack{\text{$(i+j)$th} \\ \text{from right}}}, 0, \ldots\ldots, 0, \posarrow{\neg{1}}{\substack{\text{$i$th} \\ \text{from right}}}, 0, \ldots, 0),
    \]
    which (upon taking the dual) adds a box to the $i$th arm and the $i$th leg of $\pi'$. 
    (Note that each previous leg of $\pi$ must be strictly longer than $j$, by the very fact that we are able to add the $j$th box to the bottom row of $[\Phi_v]_{\la'}$.) 
    On the other hand, including the $\bbox$ in~\eqref{add to bottom row} gives us
    \[
        w(\la'+\rho')^\sharp - v(\la'+ \rho')^\sharp = (0, \ldots, 0,\posarrow{\neg{1}}{\substack{\text{$(m+i+j)$th} \\ \text{from right}}}, 0, \ldots\ldots, 0, \posarrow{\neg{1}}{\substack{\text{$i$th} \\ \text{from right}}}, 0, \ldots, 0),
    \]
    which (upon taking the dual) also adds a box to the $i$th arm and the $i$th leg of $\pi$.  
    Hence the case of adding a non-diagonal box preserves the conjugate shapes of $\pi'$ and $\pi$.

As a specific example of Case 2, we again choose $[\Phi_v]_{\la'} = \ytableaushort[*(lightgray)]{31}$, but this time we add box $j=2$ to the bottom (i.e., only) row $i=1$ to obtain $[\Phi_w]_{\la'}$ to its southwest.  
On the left side, we thus have
    \[
        (5,3,\bbox,\neg{4}) \quad \leadsto \quad (4,3,\bbox,\neg{5}).
    \]
We confirm that this has the effect of subtracting $1$ from the coordinates $\neg{j+2} = \neg{4}$ and $j+3 = 5$, which were in positions $i=1$ and $i+j = 3$, counting from the right.  
Moreover, we have the differences  $(4,3,\neg{5}) - (5,3,\neg{4}) = (\neg{1},0,\neg{1})$ and $(4,3,\bbox,\neg{5}) - (5,3,\bbox,\neg{4}) = (\neg{1},0,0,0,0,0,0,\neg{1})$.
\end{ex}

Having concluded our preliminary example in Type II, we proceed to prove Theorem~\ref{thm:Cong and Conj}.  
The proof, for each of the types, merely makes rigorous the same idea that drives Example~\ref{ex:D8 and C3}; for this reason, we encourage the reader to begin reading the proof for Type II before the other types.

\begin{proof}[Proof of Theorem~\ref{thm:Cong and Conj}] 
Once we have shown for each type that $\pi'$ and $\pi$ are conjugate partitions, the congruence of blocks $\B_{\la'}$ and $\B_{\la}$ will follow immediately from Definition~\ref{def:congruence}, Lemma~\ref{lemma:pi'}, and Theorems~\ref{theorem:ID-dim-GLn} and~\ref{thm:dim GLn pairs}.  
Hence we prove the conjugate property for each type:

\bigskip

\textbf{Type I.} As for Type II below, the proof relies on the following analogue of~\eqref{claim in main proof}: if $[\Phi_w] = (\al - 1 \mid \be - 1)$ has rank $r$, then we have
\begin{align}
    \label{claim Type I}
    \begin{split}
    w(c_p, \ldots, c_1&;d_1,\ldots,d_q)\\
    = &(c_p, \ldots, \widehat{c_{\be_1}}, \ldots, \widehat{c_{\be_r}}, \ldots, c_1,d_{\al_r}, \ldots,d_{\al_1}; c_{\be_1}, \ldots, c_{\be_r},d_1, \ldots, \widehat{d_{\al_r}}, \ldots, \widehat{d_{\al_1}}, \ldots, d_q),
    \end{split}
\end{align}
where the hats denote missing coordinates.  
This can be shown inductively using the same method as in~\eqref{claim in main proof}. 

Recall from Lemma~\ref{lemma:pi'} that $\la+\rho = (p+q+k-1,\ldots,q+k,\bbox;\bbox,q-1,\ldots,0)$, where $\bbox = (q+k-1, \ldots, q)$ is the string deleted on either side of the semicolon via Enright--Shelton reduction.  
Therefore $\la'+\rho' = (p+q+k-1,\ldots,q+k;q-1,\ldots,0)$.  
By~\eqref{pi formula}, in the base case where $w = {\rm id}$ we have $\pi' = \pi = 0$, and so just as in Example~\ref{ex:D8 and C3}, it suffices to consider the differences~\eqref{differences} when $[\Phi_w]_{\la'}$ is obtained by adding a single box to $[\Phi_v]_{\la'}$.
We consider three cases for the location of this additional box: namely, the main diagonal,  the bottommost arm, or the rightmost leg.

First suppose we obtain $[\Phi_w]_{\la'}$ by adding the $i$th diagonal box to $[\Phi_v]_{\la'}$.
This box corresponds to the root $\ep_{p+1-i}-\ep_{p+i}$, and by the proof of Lemma~\ref{lemma:pi'} the entry in this box is $k+1$.
Thus by~\eqref{w-dot as boxes}, $w(\la'+\rho')$ is obtained from $v(\la' + \rho')$ by subtracting $(k+1)(\ep_{p+1-i}-\ep_{p+i})$, that is, by subtracting $k+1$ from the $i$th coordinate to the left of the semicolon, and adding $k+1$ to the $i$th coordinate to the right of the semicolon.
It follows from~\eqref{claim Type I} that these coordinates are $q+k$ on the left, and $q-1$ on the right, and hence we have
\[
( \ldots,\posarrow{q+k}{\text{$i$th to the left}},\bbox,\ldots; \ldots, \bbox,\posarrow{q-1}{\text{$i$th to the right}},\ldots) \quad \leadsto \quad ( \ldots,\bbox,\posarrow{q-1}{\text{$i$th to the left}},\ldots; \ldots, \posarrow{q+k}{\text{$i$th to the right}},\bbox,\ldots),
\]
counting coordinates from the semicolon, without counting the $\bbox$.  
On one hand, ignoring the $\bbox$ gives us
 \[
        w(\la'+\rho') - v(\la'+\rho') = (0, \ldots, 0, \posarrow{\neg{k+1}}{\text{$i$th to the left}},0, \ldots,0;0,\ldots,0,\posarrow{k+1}{\text{$i$th to the right}},0,\ldots,0),
 \]
 which (upon taking the dual) creates the $i$th \emph{arm} of length $m=k$ in each of the wings of $\pi'$.  
 (Recall Remark~\ref{rem:butterfly} on the butterfly diagrams that represent $\pi'$ and $\pi$.)  
 On the other hand, including the $\bbox$ gives us
 \[
    w(\la'+\rho')^\sharp - v(\la' + \rho')^\sharp = (0,\ldots,0,\underbrace{\neg{1},\ldots,\neg{1}}_{\mathclap{\substack{\text{$m+1$ coordinates,} \\ \text{ending $i$th to the left}}}},0,\ldots,0;0,\ldots,0,\underbrace{1,\ldots,1}_{\mathclap{\substack{\text{$m+1$ coordinates,} \\ \text{starting $i$th to the right}}}},0,\ldots,0),
 \]
which (upon taking the dual) creates the $i$th \emph{leg} of length $m$ in each of the wings of $\pi$.

The remaining two cases of adding a box work out similarly, following from~\eqref{claim Type I}.
In particular, adding a box to the bottommost (say $i$th) arm of $[\Phi_v]_{\la'}$ corresponds to the root $\ep_{p+1-i} - \ep_{p+j}$ (for some $j>i$), and by the proof of Lemma~\ref{lemma:pi'} the entry in this box is 1.
Thus by adding this box, we add 1 to the $i$th arm of each wing of $\pi'$, and we add 1 to the $i$th \emph{leg} of each wing of $\pi$.
Similarly, by adding a box to the rightmost (say $j$th) leg of $[\Phi_v]_{\la'}$, we add 1 to the $j$th leg of each wing of $\pi'$, and we add 1 to the $j$th arm of each wing of $\pi$.
Hence in all three cases, $\pi$ is obtained from $\pi'$ by a reflection about the 45-degree axis, and so the two are conjugates.

\bigskip

\textbf{Type II.}  Our only task is to verify the two inductive claims made in Example~\ref{ex:D8 and C3}.  
In Case 1, we claimed that whenever it is possible to add the $i$th diagonal box to $[\Phi_v]_{\la'}$, the weight $v(\la'+\rho')$ must have $k+1$ as its $i$th coordinate from the right.  
In Case 2, we claimed that whenever it is possible to add the $j$th non-diagonal box to the $i$th row of $[\Phi_v]_{\la'}$, the weight $v(\la'+\rho')$ must have $\neg{j+k}$ as its $i$th coordinate, and $k+j+1$ as its $(i+j)$th coordinate, counting from the right.  
Both of these claims will be immediate once we prove the following fact, where $[\Phi_v] = (\al-1\mid 0)$, so that the row lengths are given by $\al_1 > \cdots > \al_r > 0$:
\begin{equation}
\label{claim in main proof}
v(c_n, \ldots, c_1) = (c_n, \ldots, \widehat{c_{\al_1}}, \ldots, \widehat{c_{\al_r}}, \ldots, c_1, \neg{c_{\al_r}}, \ldots, \neg{c_{\al_1}}),
\end{equation}
where the hats denote missing coordinates.  
To prove~\eqref{claim in main proof}, first consider the case $r=1$, so that by~\eqref{w-in-terms-of-v} we have $v = s_{n,n+1-\al_1} \cdots s_{n,n}$.  
We proceed by induction on $\al_1$.  
In the base case $\al_1=1$, we have $v = s_{n,n}$, and so the effect of applying $v$ is to negate $c_1$.  
Thus $v(c_n, \ldots, c_1) = (c_n,\ldots, c_2, \neg{c_1})$, which agrees with~\eqref{claim in main proof}.  
Now assuming that~\eqref{claim in main proof} holds for $r=1$, let $[\Phi_w]$ be obtained from $[\Phi_v]$ by adding the $(\al_1+1)$th box in row 1. 
Then $w(c_n, \ldots, c_1)$ is obtained from $v(c_n, \ldots, c_1)$ by transposing and negating the $1$st and $(\al_1+1)$th coordinates from the right; therefore we have
\begin{equation}
\label{r=1}
w(c_n, \ldots, c_1) = s_{n,n-\al_1}(c_n, \ldots, \posarrow{c_{\al_1+1}}{\text{$(\al_1+1)$th from right}},\widehat{c_{\al_1}}, \ldots\ldots, c_1,\posarrow{\neg{c_{\al_1}}}{\text{$1$st from right}}) = (c_n, \ldots, \widehat{c_{\al_1+1}}, \ldots, c_1,\neg{c_{\al_1+1}})
\end{equation}
as desired.  
Proceeding by induction on $r$, we assume~\eqref{claim in main proof} and suppose that  $[\Phi_w]$ is obtained from $[\Phi_v]$ by adding a row of length $\al_{r+1}<\al_r$.  
But then $w(c_n, \ldots, c_1)$ is obtained from $v(c_n, \ldots, c_1)$ by applying the result~\eqref{r=1}, having replaced $1$ with $r+1$, to the string $(c_{\al_{r}-1}, \ldots, c_1)$, which thus becomes $(c_{\al_{r}-1}, \ldots, \widehat{c_{\al_{r+1}}}, \ldots, c_1, \neg{c_{\al_{r+1}}})$.
This proves~\eqref{claim in main proof}, and upon setting $(c_n,\ldots,c_1) = \la'+\rho' = (k+n, \ldots, k+1)$, the two claims from Example~\ref{ex:D8 and C3} follow immediately.

\bigskip
\textbf{Types IIIabcd.} The proof is essentially identical to Type II.  
The analogue to~\eqref{claim in main proof} is the following: if $[\Phi_w] = (\al - 2 \mid 0)$ with rank $r$, then we have
\[
    w(c_n,\ldots,c_1) = (c_n, \ldots, \widehat{c_{\al_1}}, \ldots, \widehat{c_{\al_r}}, \ldots, (-1)^r c_1, \neg{c_{\al_r}}, \ldots, \neg{c_{\al_1}}),
\]
so that there is always an even number of negated coordinates.  
The only other substantial difference occurs in Types IIIb and IIId, due to the twisting by the $\la'$ from Types IIIa and IIIc, respectively.  
Hence in Types IIIb and IIId, we must check in the base case ($w = {\rm id}$) that $\pi'$ and $\pi$ are conjugates.  
Indeed, in both types, when $w = {\rm id}$, we have $\pi'$ a single row and $\pi$ a single column of the same length; in Type IIIb this length is $k$, while in Type IIId it is $2k+1$. 
The rest of the proof imitates Type II, and so the details are left to the reader. \end{proof}

\section{Hilbert series and generalized Littlewood identities}
\label{section:HS}

\subsection{Hilbert series of $\widetilde{L}_\la$}

In this section, we derive the Hilbert series of the infinite-dimensional modules $\widetilde{L}_{\la}$ for the families in Table~\ref{table:WC}.  Recall from \eqref{def:L tilde} that $\widetilde{L}_\la \coloneqq L_\la \otimes F_{-\langle \la,\be^\vee\rangle\zeta}$.

Let $M$ be a highest weight $\g$-module with highest weight $\la$, and weight space decomposition $M = \bigoplus_{\mu \leq \la} M_\mu$.  
Recall the distinguished element $h_0 \in \mathfrak{z}(\k)$ from the beginning of Section \ref{sub:Hermitian pairs}.  The \emph{Hilbert series} of $M$ is the formal power series
\begin{equation}
    \label{Hilbert series definition}
    H_M(t) \coloneqq \sum_{\mu \leq \la}\dim(M_\mu) t^{-\mu(h_0)}.
\end{equation}
In the setting of this paper, where the coordinates of $\la$ are either all integers or all half-integers, each of our Hilbert series is an element of  $\mathbb{Z}[[t^{1/2}]]$, and can be written as a rational function of the form
\[
H_M(t) = \frac{P(t)}{(1-t)^d},
\]
with $P \in \mathbb Z[t^{1/2}]$ such that $P(1) \neq 0$.  
Then $P(1)$ is the Bernstein degree of $M$, and $d$ is the Gelfand--Kirillov (GK) dimension of $M$.
\begin{table}[t]
\centering
\resizebox{\linewidth}{!}{
\begin{tblr}{colspec = {|Q[m,c]|Q[m,c]|Q[m,c]|Q[m,c]|Q[m,c]|Q[m,c]|Q[m,c]|}, row{2-6} = {25pt}}

\hline

Type & $\g$ & $\la$ & $m$ &  $d$ & $\nu \in \ldots$ & $P(t)$ \\ \hline[2pt]

I & $\ssA_{P+Q-1}$ & $-k\omega_P$ & $k$ & $k(p+q+k)$ & $\Par(\min\{p,q\}\times k)$ & $\sum_\nu \left(\dim \F{\nu}{p} \dim \F{\nu}{q}\right) t^{|\nu|}$ \\ \hline[2pt]

II & $\ssD_N$ & $-2k\omega_N$ & $2k+1$ & $k(2n+2k+1)$ & {$\Par(n \times 2k)$, \\ even rows} & \SetCell[r=5]{m} $\sum_\nu \left(\dim \F{\nu}{n}\right) t^{|\nu|/2}$ \\ \hline[2pt]

IIIa & $\ssC_N$ & $-\frac{k}{2} \omega_N$ & $k-1$ & $k(2n+k-1)/2$ & {$\Par(n\times k)$, \\ even columns} & \\ \hline

IIIb & $\ssC_N$ & $-\frac{k}{2}\omega_N + \omega_k^*$ & $k-1$ & $k(2n+k-1)/2$ & {$\Par(n \times k)$, \\ odd columns} & \\ \hline[2pt]

IIIc & $\ssD_N$ & $-(2k-1)\omega_N$ & $2k$ &  $k(2n+2k-1)$ & {$\Par(n \times (2k+1))$, \\ even columns} & \\ \hline

IIId & $\ssD_N$ & $-(2k-1)\omega_N+\omega^*_{2k+1}$ & $2k$ & $k(2n+2k-1)$ & {$\Par(n \times (2k+1))$, \\ odd columns} \\ \hline

\end{tblr}
}
\caption{Data for the Hilbert series $H_{\widetilde{L}}(t) = \frac{P(t)}{(1-t)^d}$ of $\widetilde{L}=\widetilde{L}_\la \coloneqq L_\la \otimes F_{-\langle \la,\be^\vee\rangle \zeta}$.  
Just as in Table~\ref{table:WC}, $m$ denotes the number of coordinates deleted via Enright--Shelton reduction, in passing from $\g$ to $\g'$.
Recall that we write $p=P-m$, $q=Q-m$, and $n=N-m$.}
\label{table:Hilbert-series}
\end{table}

\begin{theorem}
\label{thm:Hilbert series}
    For each of the six types in Table~\ref{table:WC}, let $\widetilde{L} = \widetilde{L}_\la$ as defined in \eqref{def:L tilde}.  Then
    \[
        H_{\widetilde{L}}(t) = \frac{P(t)}{(1-t)^d},
    \]
    where $P(t)$ and $d$ are given in Table~\ref{table:Hilbert-series}.  Furthermore, we have $P(t) = H_{\widetilde{L}'}(t)$, where $\widetilde{L}' = \widetilde{L}_{\la'}$.
\end{theorem}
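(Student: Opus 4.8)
The plan is to combine the BGG resolutions of $\widetilde{L}_\la$ and $\widetilde{L}_{\la'}$ with the congruence of Theorem~\ref{thm:Cong and Conj}, and then to exploit the fact that $\widetilde{L}_{\la'}$ is finite-dimensional. By Theorem~\ref{thm:Cong and Conj} both $L_\la$ and $L_{\la'}$ are Kostant modules, so after the (harmless) twist by a one-dimensional module, $\widetilde{L}_\la$ and $\widetilde{L}_{\la'}$ have parabolic BGG resolutions indexed by the posets $\tL(\B_\la)$ and $\tL(\B_{\la'})$. Since $h_0 \in \mathfrak{z}(\k)$ acts by a scalar on each finite-dimensional $\k$-module $F_\mu$, and $H_{S(\mathfrak{p}^-)}(t) = (1-t)^{-\dim \mathfrak{p}^+}$, each parabolic Verma module contributes $H_{N_\mu}(t) = \dim(F_\mu)\, t^{g}/(1-t)^{\dim \mathfrak{p}^+}$, where the twist of Definition~\ref{def:Lambda tilde} is chosen so that the grading $g$ on the term indexed by $\pi^* \in \tL(\B_\la)$ equals $\tfrac12 |\pi|$ in Types II--III and $|(\al+m\mid\be)|$ in Type I. Taking the graded Euler characteristic of the resolution gives
\[
(1-t)^{\dim \mathfrak{p}^+}\,H_{\widetilde{L}_\la}(t) = \sum_{\pi^* \in \tL(\B_\la)} (-1)^{\ell(w)}\, t^{\,g(\pi)}\, \dim F_{\pi^*},
\]
and the same formula holds for $\widetilde{L}_{\la'}$ with $\mathfrak{p}'^+$ and $\tL(\B_{\la'})$ in place of $\mathfrak{p}^+$ and $\tL(\B_\la)$.

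Next I would apply Theorem~\ref{thm:Cong and Conj} term by term. The poset isomorphism $\pi^* \mapsto \pi'^*$ is rank-preserving, so the homological degrees $\ell(w)$ and hence the signs $(-1)^{\ell(w)}$ agree; it satisfies condition~(3) of Definition~\ref{def:congruence}, so $\dim F_{\pi^*} = \dim F_{\pi'^*}$ (this is exactly Theorems~\ref{theorem:ID-dim-GLn}--\ref{thm:dim GLn pairs}); and $\pi$, $\pi'$ are conjugate partitions, so $|\pi| = |\pi'|$ and the exponents $g(\pi) = g(\pi')$ agree. Therefore the two numerator polynomials above are the \emph{same} polynomial $R(t)$, giving
\[
(1-t)^{\dim \mathfrak{p}^+}\,H_{\widetilde{L}_\la}(t) = R(t) = (1-t)^{\dim \mathfrak{p}'^+}\,H_{\widetilde{L}_{\la'}}(t).
\]
Inspecting Table~\ref{table:WC}, $\la'$ is dominant integral for $\g'$ in every family, so $\widetilde{L}_{\la'}$ is finite-dimensional and $H_{\widetilde{L}_{\la'}}(t)$ is a polynomial; hence $(1-t)^{\dim \mathfrak{p}'^+}$ divides $R(t)$, and
\[
H_{\widetilde{L}_\la}(t) = \frac{R(t)}{(1-t)^{\dim \mathfrak{p}^+}} = \frac{H_{\widetilde{L}_{\la'}}(t)}{(1-t)^{\dim \mathfrak{p}^+ - \dim \mathfrak{p}'^+}}.
\]
This already yields the stated shape of the Hilbert series, with $P(t) = H_{\widetilde{L}_{\la'}}(t) = H_{\widetilde{L}'}(t)$ and $d = \dim \mathfrak{p}^+ - \dim \mathfrak{p}'^+$.

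It then remains to match the two explicit entries of Table~\ref{table:Hilbert-series}. For the GK dimension, using $\dim \M_{a,b} = ab$, $\dim \SM_a = \binom{a+1}{2}$, $\dim \AM_a = \binom{a}{2}$ and $P = p+m$, $Q = q+m$, $N = n+m$, a one-line computation in each family gives $\dim \mathfrak{p}^+ - \dim \mathfrak{p}'^+ = d$: e.g.\ $PQ - pq = k(p+q+k)$ in Type I, $\binom{N}{2} - \binom{n+1}{2} = k(2n+2k+1)$ in Type II, and similarly for Types IIIa--IIId. For $P(t)$, I would identify $H_{\widetilde{L}_{\la'}}(t)$ via the decomposition of the finite-dimensional $\g'$-module $\widetilde{L}_{\la'}$ into its $h_0'$-graded $\k'$-layers: in Type I, $\widetilde{L}_{\la'}$ restricts to $\gl_p \times \gl_q$ as $\bigoplus_{\nu \in \Par(\min\{p,q\}\times k)} \F{\nu}{p} \otimes \F{\nu}{q}$ with the $\nu$-layer in degree $|\nu|$ (the Cauchy-type expansion cut off by the $p\times k$ and $q\times k$ boxes, equivalently the Hilbert-series numerator of the relevant determinantal variety in the Wallach cases), and in Types II--III one uses the symmetric/antisymmetric analogue, which produces the stated parity conditions on the rows/columns of $\nu$. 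Equivalently and uniformly, $\widetilde{L}_{\la'}$ is itself a Kostant module in a regular block, so the first paragraph applied to $\widetilde{L}_{\la'}$ together with the generalized Littlewood (resp., dual-Cauchy) identities of Table~\ref{table:identities} collapses $R(t)$ to precisely $(1-t)^{\dim \mathfrak{p}'^+}$ times the claimed sum.

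The main obstacle is this last step — pinning down the explicit closed form of $H_{\widetilde{L}_{\la'}}(t)$ — since it requires either the generalized Littlewood/dual-Cauchy identities (i.e.\ the collapse of a regular-block Kostant numerator) or the classical determinantal Hilbert series; everything else is bookkeeping. The two points that require care are: (a) checking that the twist of Definition~\ref{def:Lambda tilde} normalizes $H_{\widetilde{L}_\la}$ and $H_{\widetilde{L}_{\la'}}$ compatibly (in Types IIIb and IIId one must twist by the $\la'$ of Types IIIa and IIIc, as in the definition); and (b) confirming that the grading statistic in the numerator is genuinely $\tfrac12|\pi|$ in Types II--III (resp., $|(\al+m\mid\be)|$ in Type I), so that it transports across the conjugation $\pi \leftrightarrow \pi'$ of Theorem~\ref{thm:Cong and Conj}.
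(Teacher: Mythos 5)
Your argument is correct, but it reaches the transfer step by a genuinely different route than the paper. The paper simply invokes the transfer theorem of Enright--Hunziker (relating Hilbert series across Enright--Shelton reduction), which gives
$H_{\widetilde{L}}(t) = \frac{\dim F_\la}{\dim F_{\la'}}\cdot\frac{H_{\widetilde{L}'}(t)}{(1-t)^d}$ with $d=\dim\p^+-\dim\p'^+$, and then uses the congruence only to kill the dimension ratio. You instead re-derive this relation from scratch: both $L_\la$ and $L_{\la'}$ are Kostant modules (conditions (1)--(2) of Definition~\ref{def:congruence}, already secured by Theorem~\ref{thm:ES}), so each has a parabolic BGG resolution, and taking graded Euler characteristics and matching the two numerators term by term via Theorem~\ref{thm:Cong and Conj} (lengths preserved, $\dim F_{\pi^*}=\dim F_{\pi'^*}$, and $|\pi|=|\pi'|$ so the $h_0$- and $h_0'$-gradings agree) yields $(1-t)^{\dim\p^+}H_{\widetilde{L}}(t)=(1-t)^{\dim\p'^+}H_{\widetilde{L}'}(t)$ directly, with finite-dimensionality of $\widetilde{L}_{\la'}$ handling the division. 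This is self-contained where the paper cites an external theorem, and it makes the role of the congruence (including the conjugate-partition matching of gradings, which you rightly flag as needing the check $g(\pi)=g(\pi')$) completely explicit; what it does not buy you is the closed form of $P(t)$. For that final step your first route coincides with the paper's: one needs the explicit $\k'$-decompositions of $\widetilde{L}_{\la'}$, namely the Hua--Schmid-type decomposition (Theorem~3.1 of~\cite{EHW}) for Types I and II and the decomposition of $L_{a\omega'_{n-1}+b\omega'_n}\otimes F_{-(a+b)\zeta'}$ (Theorem~22 of~\cite{Enright-Hunziker}) for Types IIIa--IIId, the latter being exactly what produces the odd-column conditions in IIIb and IIId --- your ``symmetric/antisymmetric analogue'' is only an outline here and should be replaced by these citations. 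Your proposed ``equivalent and uniform'' alternative via the identities of Table~\ref{table:identities} should be avoided: in the paper those identities are proved \emph{after} and \emph{from} the same decompositions, so invoking them here inverts the logical order. The GK-dimension bookkeeping matches the paper exactly.
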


\begin{proof}

By the transfer theorem in~\cite{Enright-Hunziker}*{p.~623} relating Hilbert series to Enright--Shelton reduction, we have
\begin{equation}
    \label{transfer theorem}
    H_{\widetilde{L}}(t) = \frac{\dim F_\la}{\dim F_{\la'}} \cdot \frac{H_{\widetilde{L}'}(t)}{(1-t)^d},
\end{equation}
where $d = \dim(\p^+) - \dim(\p'^+)$.  Due to the congruence of blocks $\B_{\la}$ and $\B_{\la'}$ in Table~\ref{table:WC}, we have $\dim F_\la / \dim F_{\la'} = 1$ in~\eqref{transfer theorem}. 
It remains to verify $H_{\widetilde{L}'}(t)$ and the GK dimension $d$ for each type below.

\bigskip

\textbf{Types I and II.}  Note that in these types, we have $\langle \la',\be'^\vee\rangle \zeta' = \la' = k \zeta'$, where  $\zeta'$ is the unique fundamental weight orthogonal to $\Phi(\k')$, as in Definition~\ref{def:Lambda tilde}.  
Recall that the $\k'$-module $F_{-k\zeta'}$ is $1$-dimensional.  
By Theorem 3.1 in~\cite{EHW}, we have
    \begin{equation}
        \label{schmid decomp}
        \widetilde{L}_{\la'} = L_{k \zeta'} \otimes F_{-k\zeta'} \cong \bigoplus_{k \geq m_1 \geq \cdots \geq m_r \geq 0} F_{-(m_1\gamma_1 + \cdots + m_r \gamma_r)}
    \end{equation}
    as a $\k'$-module, where $\gamma_1 < \cdots < \gamma_r$ are Harish-Chandra's strongly orthogonal noncompact roots for $\g'$, with $\gamma_1 \in \Pi$. 
    Expanding the $\gamma_i$ into standard coordinates, we obtain the following well-known specializations (recall that $\la'$ depends on $k$ in Table~\ref{table:WC}):
\begin{equation}
\label{Schmid I-IIIa}
\renewcommand{\arraystretch}{1.5}
\begin{array}{lll}
 \text{Type I:} & \widetilde{L}_{\la'} \cong \bigoplus_\nu \F{\nu^*}{p} \otimes \F{\nu}{q}, & \nu \in \Par(\min\{p,q\} \times k).\\
 \text{Type II:} & \widetilde{L}_{\la'} \cong \bigoplus_\nu \F{\nu^*}{n}, & \nu \in \Par(n \times 2k) \text{ with even rows.}\\
 \end{array}
\end{equation}
Each summand in~\eqref{schmid decomp} contributes $\dim(F_{-(m_1 \gamma_1 + \cdots + m_r \gamma_r)})t^{m_1 + \cdots + m_r}$ to $H_{{\widetilde{L}'}}(t)$.  Since $\nu^* = -\sum_i m_i \gamma_i$, it is easy to check that in Type I, we have $|\nu| = \sum_i m_i$, while in Type II we have $|\nu| = 2\sum_i m_i$.  
We compute the GK dimension $d$ using Proposition~\ref{prop:BGG}:
\begin{equation*}
\renewcommand{\arraystretch}{1.5}
\begin{array}{ll}
 \text{Type I:} &  d = PQ - pq = (p+k)(q+k) - pq = k(p+q+k).\\
 \text{Type II:} & d = \binom{N}{2} - \binom{n+1}{2} = \binom{n+2k+1}{2} - \binom{n+1}{2} = k(2n+2k+1).
\end{array}
\end{equation*}

\bigskip

\textbf{Types IIIabcd.}  Now let $\g' = \ssD_n$.  Recall that in Types IIIa and IIIb we have $\langle \la',\be'^\vee\rangle \zeta' = k\omega'_n$, while in Types IIIc and IIId we have $\langle \la',\be'^\vee\rangle \zeta' = (2k+1)\omega'_n$.  
By Theorem 22 in~\cite{Enright-Hunziker}, for $a,b \in \mathbb{N}$, we have the following decomposition as a module for $\k' = \gl_n$:
\begin{equation}
\label{so_n decomp}
    L_{a\omega'_{n-1} + b\omega'_n} \otimes F_{-(a+b)\zeta'} \cong \bigoplus_\nu \F{\nu^*}{n},
\end{equation}
where $\nu \in \Par(n \times (a+b))$ such that exactly $a$ columns have odd length.  We therefore have the following:
\begin{equation}
\label{so_n IIIa-d}
\renewcommand{\arraystretch}{1.5}
\begin{array}{|l|l|l|l|l|}
\hline

\text{Type} & \la' & a & b & \nu \in \ldots \\ \hline
\text{IIIa}
 & k\omega'_n & 0 & k & \Par(n \times k), \text{ with even columns}\\ \hline
\text{IIIb} &  k\omega'_{n-1} & k & 0 & \Par(n \times k), \text{ with odd columns}\\ \hline
 \text{IIIc} &(2k+1)\omega'_{n} & 0 & 2k+1 & \Par(n \times (2k+1)), \text{ with even columns}\\ \hline
 \text{IIId} &  (2k+1)\omega'_{n-1} & 2k+1 & 0 & \Par(n \times (2k+1)), \text{ with odd columns}\\ \hline
\end{array}
\end{equation}
(As before, the phrase ``even/odd columns'' means that all column lengths are even/odd.)  Each summand in~\eqref{so_n decomp} contributes $\dim(\F{\nu}{n})t^{|\nu|/2}$ to $H_{{\widetilde{L}'}}(t)$. 
We compute the GK dimension $d$ using Proposition~\ref{prop:BGG}:
\[
\renewcommand{\arraystretch}{1.5}
\begin{array}{ll}

\text{Types IIIa and IIIb:} & d = \binom{N+1}{2} - \binom{n}{2} = \binom{n+k}{2} - \binom{n}{2} = k(2n+k-1)/2.\\

\text{Types IIIc and IIId:} & d = \binom{N}{2} - \binom{n}{2} = \binom{n+2k}{2} - \binom{n}{2} = k(2n+2k-1).
\end{array}
\]
This completes the proof.
\end{proof}

\subsection{Hilbert series for modules of invariants and semi-invariants}

In Types I--IIIa, $H_{\widetilde{L}}(t)$ is the Hilbert series of the determinantal variety containing the matrices in $\M_{P,Q}$, $\AM_N$, or $\SM_N$ with rank at most $k$, $2k$, or $k$, respectively.  
More specifically, the determinantal variety is the associated variety of $L_\la$, which is called the \emph{$k$th Wallach representation}; see~\cites{EW,Enright-Hunziker,EHP}.  
The Hilbert series of these determinantal varieties can also be interpreted combinatorially from a Stanley decomposition of the coordinate ring; this decomposition is obtained from the Bj\"orner shelling \cite{Bjorner} of the $k$th order complex on the poset of matrix coordinates, and can be understood entirely in terms of non-intersecting lattice paths and a modified RSK correspondence.  
Details can be found in \cites{Sturmfels,Krattenthaler}, \cite{Herzog}, and \cite{Conca} for Types I, II, and IIIa, respectively.

From the perspective of classical invariant theory, let $\g$ be of Type $\ssA$, $\ssC$, or $\ssD$, and let $H$ be the complex classical group such that $(H, \g)$ is a dual pair (in the sense of Howe duality); hence $H$ is $\GL_k$, $\Or_k$, or $\Sp_{2k}$, respectively.  Let $V$ be the defining representation of $H$, and let either $W = (V^*)^{\oplus P} \oplus V^{\oplus Q}$ (if $\g = \ssA_{P+Q+1}$) or $W = V^{\oplus N}$ (if $\g = \ssC_N$ or $\ssD_N$).  If $\la$ is as in Types I--IIIa, then as $\g$-modules, we have
\[
\widetilde{L}_\la \cong \C[W]^H \coloneqq \{ f \in \C[W] \mid f(hw) = f(w) \text{ for all } w \in W, \: h \in H\},
\]
where the right-hand side is called the \emph{algebra of invariants}.  
Due to the fact (i.e., the first fundamental theorem of classical invariant theory) that the fundamental variants are quadratics, it is customary to write
\[
    H_{\C[W]^{H}}(t) = H_{\widetilde{L}}(t^2).
\]

If $\g = \ssC_N$ and $H = \Or_k$, with $\la$ as in Type IIIb, then as $\g$-modules we have
\[
\widetilde{L}_\la \cong \C[W]^{H,\det} \coloneqq \{ f \in \C[W] \mid f(hw) = \det h \cdot f(w) \text{ for all } w \in W, \: h \in H \},
\]
where the right-hand side is called the \emph{module of semi-invariants} with respect to the character $\chi = \det$.

\begin{ex}
In Type IIIb, let $N=3$ and $k=1$, so that $H=\Or_1 = \{\pm 1\}$.  
The module $\C[W]^{H,\det}$ of semi-invariants is the span of the odd-degree monomials in $\C[x_1,x_2,x_3]$. 
In Table~\ref{table:Hilbert-series}, we have $m=0$, so that $n=3$ and $d=3$.  Since $\nu$ ranges over partitions in $\Par(3 \times 1)$ with columns of odd length, the sum over $\nu$ ranges over the two  partitions $(1)$ and $(1,1,1)$.  
We observe that $\F{(1)}{3}=\C^3$ and $\F{(1,1,1)}{3} = \bigwedge^3\C^3$, with dimensions $3$ and $1$, respectively.  
Hence from Table~\ref{table:Hilbert-series}, we have the following Hilbert series (after replacing $t$ by $t^2$):
\[
H_{\C[W]^{H,\det}}(t) = \frac{3t+t^3}{(1-t^2)^3.}
\]
This Hilbert series reflects the fact that $\C[W]^{H,\det}$ is a free module over $\C[x_1^2, x_2^2, x_3^2]$ with three generators $x_1,$ $x_2$, $x_3$ of degree 1, and one generator $x_1 x_2 x_3$ of degree 3.
\end{ex}

\subsection{Generalized Littlewood identities}

Recall from Section~\ref{section:ID's and BGG} that the classical identities~\eqref{Dual-Cauchy},~\eqref{Littlewood-C}, and~\eqref{Littlewood-D} can each be interpreted as the Euler characteristic of the BGG resolution of the trivial representation of $\g'$ in Type I, II, and III, respectively.  
In this subsection, by combining our results in Tables~\ref{table:WC} and~\ref{table:Hilbert-series}, we generalize these identities by considering the BGG resolution of the finite-dimensional $\g'$-module $\widetilde{L}_{\la'}$.  
We first point out one of Littlewood's identities~\cite{Littlewood}*{(11.9;5)} that has not yet appeared in this paper: 
\begin{equation}
    \label{Littlewood-B}
    \tag{IIIab}
    \prod_{\mathclap{1 \leq i \leq n}} (1-x_i) \prod_{\mathclap{1 \leq i<j \leq n}}(1-x_i x_j) = \sum_{\substack{\pi: \\ \pi = \pi' }}(-1)^{(|\pi|+\rk \pi)/2} s_\pi(x_1, \ldots, x_n),
\end{equation}
where the sum is over all self-conjugate partitions $\pi$ with at most $n$ parts.
See~\cite{Macdonald}*{Ex.~5.9} for an interpretation of~\eqref{Littlewood-B} in terms of the root system for Type $\mathsf{B}$.

\begin{theorem}
\label{thm:new IDs}
    For each $k \in \mathbb{N}$, we have the three identities in Table~\ref{table:identities}.  
    Moreover,
    \begin{itemize}
    \item upon setting $k=0$, our identities reduce to the classical identities~\eqref{Dual-Cauchy},~\eqref{Littlewood-C}, and~\eqref{Littlewood-D};
    \item upon setting $k=1$, the sum of our identities in Types IIIa and IIIb reduces to the Littlewood identity~\eqref{Littlewood-B}.
    \end{itemize}
\end{theorem}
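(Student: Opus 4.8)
The plan is to derive each identity in Table~\ref{table:identities} as the Euler characteristic of the BGG resolution of the finite-dimensional $\g'$-module $\widetilde{L}_{\la'}$, exactly as in the $k=0$ case treated in Section~\ref{section:ID's and BGG}, but now using the richer combinatorial data supplied by Theorems~\ref{thm:Cong and Conj} and~\ref{thm:Hilbert series}. First I would recall that, since $\la'$ is regular, the poset $\Lambda(\B_{\la'}) \cong \prescript{\k'\!}{}{\W}$, and by Definition~\ref{def:congruence}(1)--(2) together with \cite{Enright-Hunziker}*{Thm.~2.8}, the module $\widetilde{L}_{\la'}$ (equivalently $L_{\la'}$, up to the one-dimensional twist) admits a BGG resolution whose $i$th term is $\bigoplus_{w \in \prescript{\k'\!}{}{\W}_i} N_{w\cdot\la'}$. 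Taking graded $\k'$-characters and alternating, we get that $\ch \widetilde{L}_{\la'}$ equals $\ch S(\p'^-)$ times the alternating sum $\sum_{w} (-1)^{\ell(w)} \ch F_{w\cdot\la' - \langle\la',\be'^\vee\rangle\zeta'}$, where the highest weights running over this sum are exactly the $\pi'^*$ described in the last column of Table~\ref{table:WC} by Lemma~\ref{lemma:pi'}. Rearranging, the product $\ch S(\p'^-)^{-1} = \prod(1 \mp x_i x_j)$ (or the Type~I product) times $\ch \widetilde{L}_{\la'}$ is the desired alternating Schur sum; this is the skeleton of each identity.

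The key step is then to substitute for $\ch \widetilde{L}_{\la'} = H_{\widetilde{L}'}(t)$-type data the explicit decompositions established in the proof of Theorem~\ref{thm:Hilbert series}: in Types I and II the Schmid-type decomposition \eqref{schmid decomp}, specialized in \eqref{Schmid I-IIIa}; in Types IIIa--d the Enright--Hunziker decomposition \eqref{so_n decomp}, specialized in \eqref{so_n IIIa-d}. Concretely, $\ch \widetilde{L}_{\la'}$ becomes a sum of $s_\nu$'s over $\nu$ in the appropriate rectangle with the parity constraint on rows or columns, and the grading parameter $t$ is supplied by $|\nu|$ or $|\nu|/2$ as recorded in the last column of Table~\ref{table:Hilbert-series}. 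Plugging this in, and using $\ch F_{\pi'^*} = s_{\pi'}(\bfx)$ (or $s_\pi(\bfx)s_{\pi'}(\bfy)$ in Type~I) together with the observation $|(\al+m\mid\al)| = 2|\al+m|$, yields the stated identities after the cosmetic substitution $x_i \mapsto \pm x_i$ (as in the passage from the Type~I calculation to \eqref{Dual-Cauchy}). I would present Type~II first since it is the cleanest, then Types IIIa--d, then Type~I, mirroring the organization of Theorem~\ref{thm:Cong and Conj}'s proof.

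For the two special cases: setting $k=0$ forces $m=0$ (in Types I and III) or the relevant parameter to collapse so that $\la' = 0$ and $\widetilde{L}_{\la'} = \mathbbm{1}$, whence the BGG resolution is literally the one in Proposition~\ref{prop:BGG} and the identities reduce term-by-term to \eqref{Dual-Cauchy}, \eqref{Littlewood-C}, \eqref{Littlewood-D}; this just requires checking that the index sets in Table~\ref{table:identities} degenerate correctly (the ASC partitions $(\al+1\mid\al)$ reappear). For $k=1$ in Types IIIa and IIIb: here $m = k-1 = 0$, so the partitions $(\al+m\mid\al) = (\al\mid\al)$ are self-conjugate, and $\g'=\ssD_n$ in both cases with $\la' = \omega'_n$ and $\la' = \omega'_{n-1}$ respectively. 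Adding the Type~IIIa identity (sum over self-conjugate $\pi$ of even rank) to the Type~IIIb identity (sum over self-conjugate $\pi$ of odd rank) merges the two sums into a sum over all self-conjugate partitions; on the product side, the two spinor weights $\omega'_n$ and $\omega'_{n-1}$ together contribute the extra factor $\prod_i(1-x_i)$ (via the decompositions \eqref{so_n IIIa-d} with $(a,b) = (0,1)$ and $(1,0)$, whose direct sum is the full $\C[\C^n]$ as an $\Or_1$-invariant/semi-invariant pair), reproducing Littlewood's identity~\eqref{Littlewood-B} after matching the sign $(-1)^{(|\pi|+\rk\pi)/2}$ — the $\rk\pi$ accounting precisely for whether the term came from the IIIa or IIIb family. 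I expect the main obstacle to be bookkeeping the signs and the half-integer exponents in Types IIIb and IIId: one must be careful that replacing $t \mapsto -t$ (as in the Remark following Proposition~\ref{prop:BGG}) interacts correctly with the $|\nu|/2$ grading and the $(-1)^{|\pi|/2}$ versus $(-1)^{(|\pi|+\rk\pi)/2}$ sign conventions, and that the ``stacking'' relation $|(\al+m\mid\al)| = 2|\al+m| = 2(|\nu|\text{-type quantity})$ is applied consistently so that the exponent of $t$ and the Schur-polynomial index remain compatible across all six families.
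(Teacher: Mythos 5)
Your proposal is correct and takes essentially the same route as the paper's proof: each identity is the Euler characteristic of the BGG resolution of $\widetilde{L}_{\la'}$, with the left-hand sums over $\nu$ supplied by the decompositions \eqref{Schmid I-IIIa} and \eqref{so_n IIIa-d}, the right-hand sums indexed by the partitions of Lemma~\ref{lemma:pi'}/Table~\ref{table:WC} with sign $(-1)^{\ell(\pi)}$, and the $k=0$ and $k=1$ specializations handled exactly as you describe (including $\sum_\nu s_\nu(\bfx)=\prod_i(1+x_i)$ and the substitution $x_i\mapsto -x_i$ to reach \eqref{Littlewood-B}). The only caveats are bookkeeping slips you already flag as the remaining work: the exponent $\ell(\pi)$ is recovered from $\ell(w)=\big|[\Phi_w]\big|$ via $|(\al+m\mid\al)|=2|\al|+(m+1)\rk$ (your identity $|(\al+m\mid\al)|=2|\al+m|$ holds only for $m=1$), there is no auxiliary $t$-grading in these identities, and before the sign substitution the spinor contributions give $\prod_i(1+x_i)$ rather than $\prod_i(1-x_i)$.
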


\begin{table}[ht]
\centering
\resizebox{\linewidth}{!}{
\begin{tblr}{vlines, rows={m}, columns = {c}}

\hline

Type & Identity & $\nu \in \ldots$ & $\pi$ & $\ell(\pi)$ \\ \hline[2pt]

I & $\begin{aligned}\displaystyle\prod_{i,j} (1 - x_i y_j) &\sum_\nu s_\nu(\bfx)s_\nu(\bfy)\\
=& \sum_\pi (-1)^{\ell(\pi)} s_{\pi_x}(\bfx) s_{\pi_y}(\bfy)
\end{aligned}$ & $\Par(\min\{p,q\}\times k)$ & {$(\al|\be)\in \Par(p \times q)$;\\$\pi_x = (\al + k \mid \be)$,\\$\pi_y = (\be + k \mid \al)$} & $|\pi|-k \rk \pi$\\ \hline

II & $\begin{aligned}\displaystyle\prod_{i \leq j} (1 - x_i x_j) &\sum_\nu s_\nu(\bfx)\\ = &\sum_\pi (-1)^{\ell(\pi)} s_\pi(\bfx)\end{aligned}$  & {$\Par(n \times 2k)$,\\even rows} & {$(\al + 2k + 1 \mid \al)$,\\$\al_1 < n$} & $\dfrac{|\pi| - 2 k \rk \pi}{2}$ \\ \hline

IIIa & \SetCell[r=2]{c} $\begin{aligned}\displaystyle \prod_{i < j} (1 - x_i x_j) & \sum_\nu s_\nu(\bfx) \\ = &\sum_\pi (-1)^{\ell(\pi)} s_\pi(\bfx)\end{aligned}$ &{$\Par(n\times k)$,\\ even columns} & 
{$(\al+k-1\mid\al)$,\\even rank,\\$\al_1 < n$} & \SetCell[r=2]{c} $\dfrac{|\pi|- k \rk \pi}{2}$ \\ \hline

IIIb & & {$\Par(n\times k)$,\\ odd columns} & {$(\al+k-1\mid\al)$,\\odd rank,\\$\al_1 < n$} & \\ \hline
\end{tblr}
}
\caption{Identities generalizing the classical identities~\eqref{Dual-Cauchy}--\eqref{Littlewood-D}.  
The shorthand $\bfx$ and $\bfy$ are as in Table~\ref{table:Type123}.}
\label{table:identities}
\end{table}

A word on notation: the identities in Table~\ref{table:identities} can be stated without any reference to congruent blocks or Enright--Shelton reduction, and so we have omitted the prime symbol on the partitions $\pi$.  
We therefore continue this convention in the proof below, despite using the fact that these partitions are elements of the poset $\tL(\B_{\la'})$ from earlier in the paper.  
Likewise, we omit the prime symbol on the weights $\mu$, which earlier we denoted by $\mu' \in \Lambda(\B_{\la'})$.

\begin{proof}[Proof of Theorem~\ref{thm:new IDs}]

On one hand, by using~\eqref{Schmid I-IIIa} and~\eqref{so_n IIIa-d}, we can easily write down the character of $\widetilde{L}_{\la'}$ as the sum of Schur polynomials ranging over certain partitions $\nu$; this is the sum on the left-hand side of the identities in Table~\ref{table:identities}.  

On the other hand, $\ch \widetilde{L}_{\la'}$ must equal the alternating sum of the characters of the parabolic Verma modules occurring in the BGG resolution for $\widetilde{L}_{\la'}$.  
Since tensoring with a $1$-dimensional $\k'$-module is an exact functor, we can obtain the BGG resolution of $\widetilde{L}_{\la'}$ from that of $L_{\la'}$, by replacing each $N_{\mu}$ with $\widetilde{N}_{\mu} \coloneqq N_{\mu} \otimes F_{-\langle \la',\be'^\vee\rangle \zeta'}$.  
Then 
\begin{equation}
\label{Etilde char ID}
    \ch \widetilde{L}_{\la'} = \sum_{\mu \in \Lambda(\B_{\la'})} (-1)^{\ell(\mu)}\ch \widetilde{N}_{\mu},
\end{equation}
where $\ell(\mu)$ is defined to be $\ell(w)$ for the unique $w \in \prescript{\k'\!}{}{\W}$ such that $\mu = w \cdot \la'$. 
As noted above, the set $\{\widetilde{N}_{\mu} \mid \mu \in \Lambda(\B_{\la'})\}$ of Verma modules occurring in~\eqref{Etilde char ID} equals the set $\{N_{\pi^*} \mid \pi^* \in \tL(\B_{\la'})\}$, which we understand completely from Table~\ref{table:WC}.  
Hence we can rewrite~\eqref{Etilde char ID} as
\begin{equation}
\label{Etilde char ID final}
    \ch \widetilde{L}_{\la'} = \sum_{\pi^* \in \tL(\B_{\la'})} (-1)^{\ell(\pi)} \ch N_{\pi^*}
\end{equation}
where $\ell(\pi)$ is defined to be $\ell(\mu)$ such that $\mu \in \Lambda(\B_{\la'})$ is the preimage of $\pi^*$.
Thus, by~\eqref{length-size-diagram}, computing $\ell(\pi)$ is just a matter of recovering $|[\Phi_w]|$ from $\rows\,\st{w}_{\la'}$, which we compute in Table~\ref{table:identities}.  
Hence the right-hand side of each identity in the table is given by~\eqref{Etilde char ID final}, except that we multiply both sides by $\ch S(\p^-)$, found in Table~\ref{table:Type123}.

Setting $k=0$ forces $\nu = 0$, so that the $\sum_\nu$ sum is 1.  
For Types I and II, we then immediately recover the classical identities~\eqref{Dual-Cauchy} and~\eqref{Littlewood-C}. 
In Types IIIa and IIIb, the $k=0$ case is degenerate because the $\pi$'s are simply the stacked diagrams $\st{w}$, as in the proof of Proposition~\ref{prop:BGG}; hence the condition on the parity of the rank of $\pi$ disappears, and the two identities collapse into the Littlewood identity~\eqref{Littlewood-D}. 

Setting $k=1$ in Types IIIa and IIIb, we see that $\nu$ must be a single column and so $s_\nu(\bfx)$ is an elementary symmetric polynomial; therefore $\sum_{\nu} s_\nu(\bfx) = \prod_i (1+x_i)$. 
Adding these two identities together, we obtain
\begin{equation}
\label{almost Littlewood IIIab}
    \prod_{i < j} (1-x_i x_j) \prod_{i} (1+x_i) = \sum_\pi(-1)^{(|\pi| - k \rk \pi)/2} s_\pi(\bfx),
\end{equation}
where (since $m=k-1 = 0$) the sum ranges over self-conjugate partitions $\pi$, just as in~\eqref{Littlewood-B}.  
Now let $-\bfx \coloneqq (-x_1, \ldots, -x_n)$.  Since $s_\pi(\bfx)$ is homogeneous of degree $|\pi|$, we have $s_\pi (-\bfx) = (-1)^{|\pi|} s_\pi(\bfx)$. 
Moreover, since $\frac{|\pi|+ \rk \pi}{2} + \frac{|\pi| - \rk \pi}{2} = |\pi|$, the two addends have the same parity if and only if $|\pi|$ is even, and therefore
\[
(-1)^{(|\pi| - \rk \pi)/2}s_\pi(-\bfx) = (-1)^{(|\pi|+ \rk \pi)/2} s_\pi(\bfx).
\]
Thus, if we make the substitutions $x_i \mapsto -x_i$ in~\eqref{almost Littlewood IIIab}, we recover the Littlewood identity~\eqref{Littlewood-B}.
\end{proof}

\begin{rem}
The analogous identities corresponding to Types IIIc and IIId reduce to those in Types IIIa and IIIb, upon replacing $2k$ by $k-1$.  
We can, however, view the identities for Types IIIa and IIIb as the two extreme special cases of a more general identity, since they follow from the first two special cases~\eqref{so_n IIIa-d} of the general branching rule~\eqref{so_n decomp}.  
Specifically, for $a,b \in \mathbb{N}$, we have 
\begin{equation}
\label{identity IIIab}
\prod_{i < j} (1 - x_i x_j) \sum_\nu s_\nu(\bfx) = \sum_\pi (-1)^{\ell(\pi)} s_\pi(\bfx),
\end{equation}
where $\nu \in \Par(n\times (a+b))$ has $a$ odd columns and $b$ even columns, and where the right-hand sum ranges over all partitions of the form
\[
\pi = 
\begin{cases}
(\al+a+b-1 \mid \al) \text{ with odd rank}, & a=0,\\
(\al+a+b-1 \mid \al) \text{ with even rank}, & b=0,\\
(\al+a+b-1,a-1 \mid\al,0) \text{ with odd rank} & \\ \qquad \text{or} & \\
(\al+a+b-1, b-1 \mid \al,0) \text{ with even rank},
& a,b>0,
\end{cases}
\]
for $\al_1 < n$.  
(When $a=b=0$, the first two cases collapse into the single case in the Littlewood identity~\eqref{Littlewood-D}, as mentioned above.)  
The Frobenius symbols in the last case describe an additional arm, whose length is either $a-1$ or $b-1$.  
In \eqref{identity IIIab} we also have the more complicated formula 
\[
\ell(\pi) = \frac{|\pi|- 2(a+b)\lfloor\frac{\rk \pi}{2}\rfloor - (-1)^{\rk \pi} a}{2}.
\]
\end{rem}

\section{Open problems}
\label{sec:open probs}

\begin{prob}
    \label{prob:dimension equalities}
     Determine all equalities $\dim \F{\sigma}{s} = \dim \F{\tau}{t}$.
 \end{prob}

 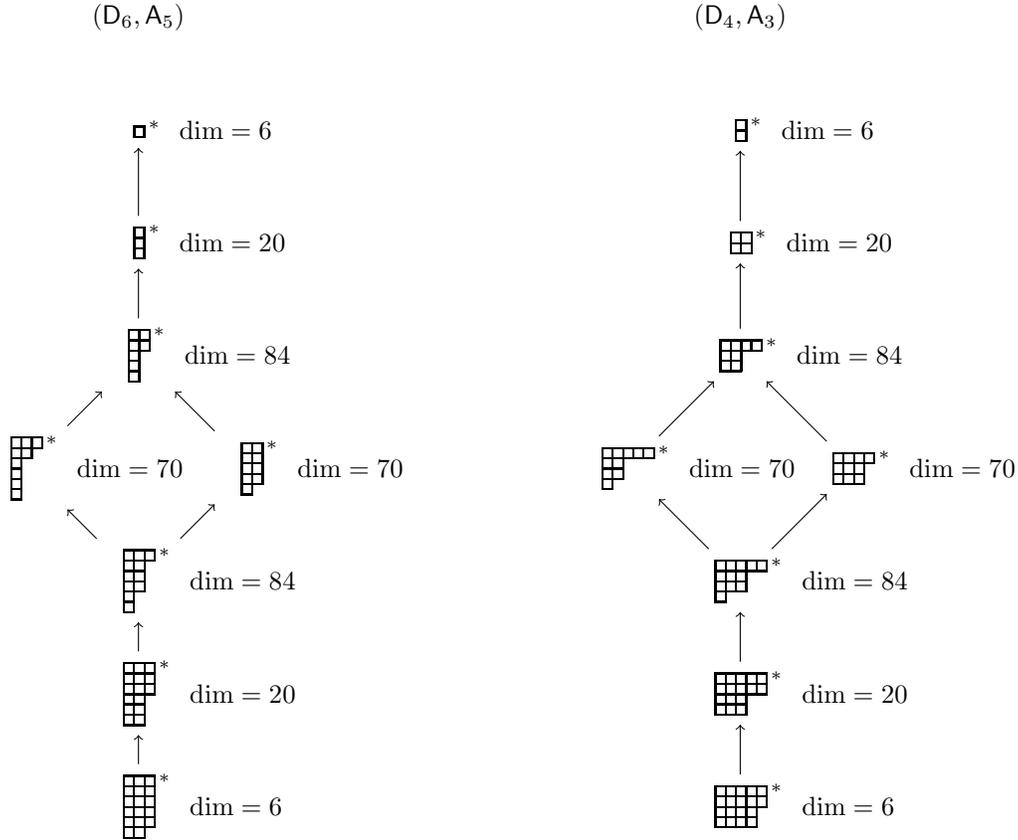
\begin{figure}[t]
     \centering
     \[\ytableausetup{boxsize=3.5pt}
\begin{tikzpicture}

  \node (g) at (-4,6) {$(\ssD_6, \ssA_5)$};

  \node[label=right: {$\dim = 6$}] (a) at  (-4,4.5) {$\phantom{*\,}\ydiagram{1}^{\,*}$};

  \node[label=right: {$\dim = 20$}] (b) at  (-4,3) {$\phantom{*\,}\ydiagram{1,1,1}^{\,*}$};

  \node[label=right: {$\dim = 84$}] (c) at  (-4,1.5) {$\phantom{*\,}\ydiagram{2,2,1,1,1}^{\,*}$};

  \node[label=right: {$\dim = 70$}] (d) at (-5.5,0) {$\phantom{*\,}\ydiagram{3,2,1,1,1,1}^{\,*}$};

  \node[label=right: {$\dim = 70$}] (e) at (-2.5,0) {$\phantom{*\,}\ydiagram{2,2,2,2,1}^{\,*}$};

  \node[label=right: {$\dim = 84$}] (f) at  (-4,-1.5) {$\phantom{*\,}\ydiagram{3,2,2,2,1,1}^{\,*}$};

  \node[label=right: {$\dim = 20$}] (g) at  (-4,-3) {$\phantom{*\,}\ydiagram{3,3,3,2,2,2}^{\,*}$};

  \node[label=right: {$\dim = 6$}] (h) at  (-4,-4.5) {$\phantom{*\,}\ydiagram{3,3,3,3,3,2}^{\,*}$};

  \draw [<-] (a) edge (b) (b) edge (c) (c) edge (d) (d) edge (f) (f) edge (g) (g) edge (h) (c) edge (e) (e) edge (f);

  \node (g) at (4,6) {$(\ssD_4, \ssA_3)$};

  \node[label=right: {$\dim = 6$}] (a) at (4,4.5) {$\phantom{*\,}\ydiagram{1,1}^{\,*}$};

  \node[label=right: {$\dim = 20$}] (b) at (4,3) {$\phantom{*\,}\ydiagram{2,2}^{\,*}$};

  \node[label=right: {$\dim = 84$}] (c) at (4,1.5) {$\phantom{*\,}\ydiagram{4,2,2}^{\,*}$};

  \node[label=right: {$\dim = 70$}] (d) at (2.5,0) {$\phantom{*\,}\ydiagram{5,2,2,1}^{\,*}$};

  \node[label=right: {$\dim = 70$}] (e) at (5.5,0) {$\phantom{*\,}\ydiagram{4,3,3}^{\,*}$};

  \node[label=right: {$\dim = 84$}] (f) at (4,-1.5) {$\phantom{*\,}\ydiagram{5,3,3,1}^{\,*}$};

  \node[label=right: {$\dim = 20$}] (g) at (4,-3) {$\phantom{*\,}\ydiagram{5,5,3,3}^{\,*}$};

  \node[label=right: {$\dim = 6$}] (h) at (4,-4.5) {$\phantom{*\,}\ydiagram{5,5,4,4}^{\,*}$};

  \draw [<-] (a) edge (b) (b) edge (c) (c) edge (d) (d) edge (f) (f) edge (g) (g) edge (h) (c) edge (e) (e) edge (f);

\end{tikzpicture}
\]
     \caption{A sporadic example of congruent blocks, where corresponding poset elements are not given by conjugate partitions.  
     On the left we have $\tL(\B_{\la})$, for $\g = \ssD_6$ with $\la = (\neg{1}, \neg{1}, \neg{1}, \neg{1}, \neg{1}, \neg{2})$.  
     On the right we have $\tL(\B_{\la'})$, for $\g' = \ssD_4$ with $\la' = (1,1,0,0)$.}
     \label{fig:sporadic example}
 \end{figure}

 Theorem~\ref{thm:dim GLn pairs} constitutes a first step toward the solution of Problem~\ref{prob:dimension equalities}, in giving the infinite families where $\sigma = (\al+m\mid \al)$, $\tau = \sigma'$, and $t = s+m$.  
 This problem is similar in flavor to the question of classifying the equalities $\binom{a}{b} = \binom{c}{d}$ among binomial coefficients.  
 There is an infinite family of these discovered by Lind~\cite{Lind} and Singmaster~\cite{Singmaster}; moreover, de Weger~\cite{deWeger} has conjectured that this family comprises \emph{all} nontrivial equalities, with the exception of seven sporadic instances.
 
 \begin{prob}
    \label{prob:classify congruent blocks}
    Classify all instances of congruent blocks (in the context of Hermitian symmetric pairs).
 \end{prob}
 
 As a starting point for Problem~\ref{prob:classify congruent blocks}, we highlight a sporadic example of congruent blocks $\B_{\la}$ and $\B_{\la'}$ occurring outside the families in Table~\ref{table:WC}.
 In particular, let $\g = \ssD_6$ with $\la = -2\omega_6 + \omega^*_1 = (\neg{1},\neg{1},\neg{1},\neg{1},\neg{1},\neg{2})$.  
 Applying Enright--Shelton reduction, we obtain $\g'=\ssD_4$ with $\la' = (1,1,0,0)$.  
 Upon twisting the weights in $\Lambda(\B_{\la})$ and $\Lambda(\B_{\la'})$ by subtracting $-2\omega_6$ and $2\omega_4$, respectively, we obtain the posets $\tL(\B_{\la})$ and $\tL(\B_{\la'})$ shown in Figure~\ref{fig:sporadic example}.  
 We label each poset element with the dimension of the $\k$-module (or $\k'$-module) with the corresponding highest weight.  
 Note that the blocks $\B_{\la}$ and $\B_{\la'}$ are indeed congruent, although their corresponding poset elements are not given by conjugate partitions.

 \subsection*{Acknowledgments} Originally our proofs of Theorems~\ref{theorem:ID-dim-GLn} and \ref{thm:dim GLn pairs} relied on the Weyl dimension formula; we would like to thank Daniel Herden for suggesting a simpler argument via the hook--content formula.
 We thank the anonymous referees for their helpful comments and suggestions.

\bibliographystyle{plain}
\bibliography{references}

\end{document}